\documentclass[preprint,12pt]{article}
\usepackage[top=1.0in, bottom=1.0in, left=1in, right=1in]{geometry}
\usepackage{amsmath,amsfonts,amssymb,amsthm}
\usepackage{mathrsfs,dsfont}
\usepackage{graphicx}
\usepackage{colortbl,dcolumn}
\usepackage{psfrag}
\usepackage{booktabs}
\usepackage{cite}
\usepackage{url}
\usepackage{hyperref}
\usepackage{subfigure}
\allowdisplaybreaks[4]
\numberwithin{equation}{section}

\newcommand{\R}{\mathbb{R}}

\newcommand{\E}{\mathbb{E}}

\newtheorem{theorem}{Theorem}[section]
\newtheorem{definition}[theorem]{Definition}
\newtheorem{lemma}[theorem]{Lemma}
\newtheorem{proposition}[theorem]{Proposition}
\newtheorem{corollary}[theorem]{Corollary}

\newtheorem{assumption}[theorem]{Assumption}

\begin{document}
\title{Strong order one-half convergence of a coupled tamed Euler--Peano scheme
for reflected stochastic differential equations with super-linearly growing coefficients\footnotemark[1]}

\footnotetext{\footnotemark[1] This work was supported by National Natural Science Foundation of China (Nos. 12501581, 12201552), Yunnan Fundamental Research Projects (No. 202601AT070161), and the Disciplinary Funding of Central University of Finance and Economics.}

\author{Ziheng Chen\footnotemark[2],
	    \quad Caiyun Hua\footnotemark[3],
		\quad Meng Cai\footnotemark[4]}
\footnotetext{\footnotemark[2] School of Mathematics and Statistics, 
Yunnan University, Kunming, Yunnan, 650500, China. Email: czh@ynu.edu.cn.}

\footnotetext{\footnotemark[3] School of Mathematics and Statistics, Yunnan University,
Kunming, Yunnan, 650500, China. Email: 12024113042@stu.ynu.edu.cn.}

\footnotetext{\footnotemark[4] School of  Statistics and Mathematics, 
Central University of Finance and Economics, Beijing, 100081, China. 
Email: mcai@lsec.cc.ac.cn. Corresponding author.}

%\date{}

\maketitle

\begin{abstract}
      {\rm\small  We study strong numerical approximations for reflected stochastic differential equations in possibly unbounded convex domains with super-linearly growing drift and diffusion coefficients. Under a coupled monotonicity condition and polynomial local Lipschitz assumptions, we first establish the well-posedness of the reflected SDE and derive uniform moment bounds for its solution. We then introduce a coupled tamed Euler--Peano scheme, in which the drift and the squared diffusion coefficient are tamed by a common factor and the resulting Euler--Peano path is corrected through the Skorokhod problem. This common taming factor preserves the drift--diffusion coercivity structure and yields uniform moment estimates for the numerical solution. We prove strong convergence of order $1/2$ for both the constrained state process and the boundary regulator, thereby recovering the standard Euler-type strong order in this reflected setting. Numerical experiments for a reflected stochastic Ginzburg--Landau type system illustrate the constraint preservation of the scheme and support the theoretical convergence rate.}

      \vspace{1em}\textbf{AMS subject classification: }{\rm\small 60H10, 60H35, 65C30}

      \vspace{1em}\textbf{Key Words:}{\rm\small} reflected stochastic differential equation; coupled tamed Euler--Peano method; super-linearly growing coefficients; strong convergence of order $1/2$; boundary regulator; Skorokhod problem
\end{abstract}

\section{Introduction}\label{sec:introduction}
We study strong numerical approximations for the following reflected stochastic differential equation (RSDE) with non-globally Lipschitz coefficients in a possibly unbounded nonempty open convex domain $D \subset \mathbb{R}^{d}$:
\begin{equation}\label{eq:reflected-sde-intro}
      X(t)
      =
      x_{0} + \int_{0}^{t} b\big(X(s)\big)\,ds
      +
      \int_{0}^{t} \sigma\big(X(s)\big)\,dW(s)
      +
      K(t), \quad t \in [0,T].
\end{equation}
Here $x_{0}\in D$, $W$ is an $m$-dimensional Brownian motion, and $K$ denotes the boundary regulator which keeps the state process in $\overline{D}$ through the Skorokhod reflection mechanism. This boundary-supported regulation mechanism makes RSDEs a natural framework for modelling constrained stochastic dynamics, with typical applications in queueing networks, constrained diffusion processes, stochastic control, mathematical finance, interacting particle systems, and stochastic variational inequalities; see, e.g., \cite{tanaka1979stochastic, lions1984stochastic, saisho1987stochastic, harrison1981reflected, dupuis1991lipschitz, glaister1988shock, ramanan2006reflected, pilipenko2014introduction}. From the numerical point of view, however, the reflection term is not given explicitly as a coefficient of the equation. Rather, it is a path-dependent finite-variation process whose increments are governed by the boundary geometry, the normal cone, and the Skorokhod constraint. Consequently, the numerical approximation of an RSDE cannot be reduced to the approximation of the constrained trajectory alone; it also requires a quantitative error analysis for the boundary regulator.

Approximation schemes for reflected diffusions have been studied from several viewpoints, including Euler-type, Euler--Peano,  projection, penalization, half-space, and Wong--Zakai-type approximations; see, e.g., \cite{lepingle1995euler, pettersson1997penalization, slominski2001euler, gobet2002euler, bossy2004symmetrized, aida2013wongzakai, aida2014wongzakai, zhang2014strong, wang2019approximation, ren2025wongzakai, zhang2025approximation, li2026wongzakai}. These works provide important tools for approximating constrained stochastic dynamics and for understanding the effect of the Skorokhod reflection at the discrete or pathwise approximation level. Nevertheless, most available strong convergence analyses rely essentially on globally Lipschitz or linearly growing coefficients, together with suitable regularity assumptions on the domain and the reflection mechanism. This is a restrictive framework for nonlinear constrained stochastic dynamics, where polynomial restoring forces, dissipative interactions, or state-dependent noise may naturally give rise to non-globally Lipschitz coefficients with super-linear growth. An early systematic contribution in the non-Lipschitz setting was made by Duan and Peng \cite{duan2022approximation}, who studied a penalization Euler scheme for RSDEs with non-Lipschitzian coefficients in a bounded convex domain. They established uniform mean-square convergence and further obtained convergence rates under additional polynomial-growth assumptions. More recently, modified tamed Euler and projection-type schemes for RSDEs with non-globally Lipschitz coefficients have been investigated in \cite{huang2026note, zhang2026convergence}. However, the available rates do not yet give a fixed Euler-type strong order $1/2$ for RSDEs with super-linearly growing drift and diffusion coefficients in the sense considered here. Moreover, they do not provide simultaneous order-$1/2$ estimates for the constrained state process and the boundary regulator. This leaves open the problem of designing an explicit reflected scheme that recovers the standard Euler-type strong order $1/2$ under super-linear growth and, at the same time, provides a quantitative order-$1/2$ approximation of the boundary regulator.

We address this problem by constructing a reflected explicit approximation that combines a pathwise Skorokhod correction with a
coupled taming of the drift and diffusion coefficients. The taming idea is in the spirit of tamed and other explicit stabilized
schemes developed for non-globally Lipschitz SDEs; see, e.g., \cite{hutzenthaler2012strong, wang2013tamed, tretyakov2013fundamental, sabanis2016euler, chen2019meansquare, wang2024weak}. However, in the reflected setting the taming cannot be designed only for the unconstrained SDE part. It has to be compatible with the Skorokhod correction and, at the same time, respect the coupled drift-diffusion structure used in the coercivity analysis of the RSDE. Therefore, for stepsize $h > 0$ and $x \in \R^{d}$, we introduce the common taming factor
\begin{equation*}
      \lambda_h(x)
      =
      \frac{1}{1+h^{1/2}\big(|b(x)|+\|\sigma(x)\|_{\mathrm{HS}}^{2}\big)}
\end{equation*}
and define $b_{h}(x) = \lambda_{h}(x)b(x), \sigma_{h}(x) = \lambda_{h}(x)^{1/2}\sigma(x)$. The advantage of using this common factor becomes clear from the following identity: for a fixed interior reference point $x_{\ast} \in D$ and every constant $c > 0$,
\begin{equation*}
      2\big\langle x-x_{\ast},b_{h}(x)\big\rangle
      +
      c\|\sigma_{h}(x)\|_{\mathrm{HS}}^{2}
      =
      \lambda_{h}(x)\big(2\big\langle x-x_\ast,b(x)\big\rangle
      +
      c\|\sigma(x)\|_{\mathrm{HS}}^{2}\big).
\end{equation*}
%Hence the whole drift-diffusion coercivity expression is scaled by the same factor, rather than being modified term by term. This is the structural reason for using a coupled taming, and it allows the coercivity estimates for the original coefficients to be transferred directly to the tamed coefficients. 
Thus the drift-diffusion coercivity estimate for the original coefficients is inherited directly by the tamed coefficients. Finally, by freezing $b_h$ and $\sigma_h$ at the left endpoints of the time grid and applying the Skorokhod correction, we define the coupled tamed Euler--Peano approximation $(X_h,K_h)$ by
\begin{equation}\label{eq:coupled-tamed-euler-peano-intro}
      X_{h}(t)
      =
      x_{0}
      +
      \int_{0}^{t} b_{h}\big(\overline{X}_{h}(s)\big)\,ds
      +
      \int_{0}^{t} \sigma_{h}\big(\overline{X}_{h}(s)\big)\,dW(s)
      +
      K_{h}(t), \quad t \in [0,T],
\end{equation}
where $\overline{X}_{h}$ denotes the piecewise constant interpolation of $X_{h}$ at the left endpoints.

The theoretical analysis of this approximation starts with the continuous reflected equation. Under the coupled monotonicity condition and polynomial local Lipschitz assumptions, which allow both the drift and diffusion coefficients to grow super-linearly, we first establish the well-posedness of the RSDE \eqref{eq:reflected-sde-intro} and derive suitable uniform moment bounds for the solution. We then prove uniform moment estimates for the coupled tamed Euler--Peano approximation and derive quantitative strong error bounds for both the state process and the boundary regulator. More precisely, for the admissible range of $p \geq 2$, there exists a constant $C > 0$, independent of the stepsize $h$, such that
\begin{equation*}
      \bigg(\E\left[\sup_{0\leq t\leq T}
      |X(t)-X_h(t)|^{p}\right]\bigg)^{1/p}
      +
      \bigg(\E\left[\sup_{0\leq t\leq T}
      |K(t)-K_h(t)|^{p}\right]\bigg)^{1/p}
      \leq
      C h^{1/2}.
\end{equation*}
%This estimate shows that the proposed explicit reflected scheme recovers the standard Euler-type strong order $1/2$ in the presence of super-linearly growing drift and diffusion coefficients, simultaneously for the constrained state process and the boundary regulator. To the best of our knowledge, this is the first result establishing such an order-$1/2$ estimate for both components in this setting, and it extends taming ideas for non-globally Lipschitz SDEs to the reflected case through a coupled drift--diffusion construction.
This estimate shows that the proposed explicit reflected scheme recovers the standard Euler-type strong order $1/2$ in the presence of super-linearly growing drift and diffusion coefficients. In this sense, the result reaches the usual optimal benchmark for Euler-type strong approximations driven by Brownian noise (see, e.g., \cite{kloeden1992numerical}), while simultaneously providing an order-$1/2$ approximation of the boundary regulator. This is in contrast with the existing non-globally Lipschitz reflected schemes, where the available rates either are lower than this benchmark, are obtained under different structural restrictions, or do not include a matching order estimate for the Skorokhod reflection term; see, e.g., \cite{duan2022approximation, huang2026note, zhang2026convergence}. To the best of our knowledge, this is the first result establishing such an Euler-type order-$1/2$ estimate for both the constrained state process and the boundary regulator in the reflected super-linear setting.

The rest of the paper is organized as follows. Section~\ref{sec:prelim} collects the geometric preliminaries for convex domains and the Skorokhod problem, states the standing assumptions, and proves the well-posedness and moment bounds of the RSDE. The numerical scheme is introduced in Section~\ref{sec:scheme}, where we define the coupled tamed Euler--Peano approximation and establish
uniform moment estimates for the numerical solution. Section~\ref{sec:convergence} is devoted to the strong convergence analysis, including error estimates for both the state process and the boundary regulator. Finally, Section~\ref{set:experiments} presents numerical experiments illustrating the constraint-preserving property of the method and supporting the theoretical convergence rate.

\section{Geometric preliminaries and assumptions}\label{sec:prelim}

This section collects the geometric and analytic ingredients needed in the sequel. We first introduce some notation. Let $\langle \cdot,\cdot \rangle$ and $|\cdot|$ denote the Euclidean inner product and norm in $\R^{d}$, respectively. For a matrix
$A = (a_{ij}) \in \mathbb{R}^{d \times m}$, we write $A^{\top}$ for its transpose and denote its Hilbert--Schmidt norm by
\begin{equation*}
      \|A\|_{\mathrm{HS}}
      :=
      \bigg(\sum_{i=1}^{d}\sum_{j=1}^{m}|a_{ij}|^{2}\bigg)^{1/2}
      =
      \big(\operatorname{Tr}(AA^{\top})\big)^{1/2}.
\end{equation*}
For $x \in \R^{d}$ and $r > 0$, we set $B(x,r) := \big\{y \in \R^{d}: |y-x| < r\big\}$. Throughout the paper, $C$ denotes a generic positive constant whose value may change from line to line. The constants $C_p$, $C_{p,T}$ and similar quantities may depend on the indicated parameters and on the fixed structural constants of the problem, such as the dimension, the domain constants, the coefficients, and the reference point $x_{\ast}$, but they are independent of discretization parameters, truncation levels, stopping levels, and time-grid indices unless otherwise stated.

\subsection{Convex domains and the Skorokhod problem}
\label{subsect:convexdomains}

Throughout the paper, the reflecting domain is assumed to satisfy the following convexity condition.

\begin{assumption}\label{ass:domain}
      Let $D \subset \R^{d}$ be a nonempty open convex domain with $\partial D \neq \varnothing$.
\end{assumption}

Since $D$ is nonempty, we fix a reference point $x_\ast\in D$ for later use. For $x \in \partial{D}$ and $r > 0$, we set $N_{x,r} := \big\{\mathbf{n} \in \R^{d}: |\mathbf{n}| = 1,\,B(x-r\mathbf{n},r) \cap D = \varnothing\big\}$, and define
\begin{equation*}
      N_{x}
      :=
      \bigcup_{r>0}N_{x,r}.
\end{equation*}
The vectors in $N_x$ are called inward unit normal vectors to $\partial D$ at $x$. Since $D$ is convex, the above definition is equivalent to
\begin{equation}\label{eq:normal-cone-characterization}
      N_{x}
      =
      \big\{
      \mathbf{n} \in\R^{d}:
      |\mathbf{n}|=1,\,
      \langle y-x,\mathbf{n}\rangle\geq0
      \text{ for every }y\in\overline{D}
      \big\}.
\end{equation}

For a continuous function $\phi \colon [0,T] \to \R^{d}$ of bounded variation, we denote by $|\phi|_{[s,t]}$ its total variation over the interval $[s,t]$, and by $d|\phi|$ the corresponding variation measure. Let $w\in C\big([0,T];\R^{d}\big)$ with $w(0) \in \overline{D}$. A pair $(\xi,\phi) \in C\big([0,T];\overline{D}\big) \times C\big([0,T];\R^{d}\big)$ is called a solution of the Skorokhod problem associated with $w$ if the following conditions are satisfied:
\begin{enumerate}
      \item $\xi(t)=w(t)+\phi(t)$ for every $t\in[0,T]$;

      \item $\phi(0)=0$ and $\phi$ is a continuous function of bounded variation;

      \item there exists a measurable function $\mathbf{n} \colon [0,T] \to \R^{d}$ such that
            \begin{equation*}
                  \phi(t)
                  =
                  \int_{0}^{t} \mathbf{n}(s)\,d|\phi|(s),
                  \quad \mathbf{n}(s) \in N_{\xi(s)}
                  \quad d|\phi|\text{-a.e.};
            \end{equation*}

      \item the variation measure of $\phi$ is supported on the boundary, namely,
            \begin{equation*}
                  |\phi|_{[0,t]}
                  =
                  \int_{0}^{t}
                  \mathbf{1}_{\{\xi(s)\in\partial D\}}\,d|\phi|(s),
                  \quad t \in [0,T].
            \end{equation*}
\end{enumerate}

For a convex domain, the Skorokhod problem admits a unique solution for every continuous driving path $w\in C\big([0,T];\R^{d}\big)$ satisfying $w(0) \in \overline{D}$; see, e.g., \cite{tanaka1979stochastic,aida2014wongzakai}. We denote the corresponding Skorokhod map by $\xi = \Gamma(w)$. Moreover, the convexity of $D$ yields the following standard monotonicity properties of the reflection term; see, e.g., \cite{tanaka1979stochastic, lions1984stochastic}.

\begin{lemma}\label{lem:reflection-monotonicity}
      Let $(\xi,\phi)$ be the solution of the Skorokhod problem associated with a path $w\in C\big([0,T];\R^{d}\big)$ satisfying $w(0) \in \overline{D}$. Then for every $y \in \overline{D}$ and every $t \in [0,T]$,
      \begin{equation}\label{eq:reflection-one-path}
            \int_{0}^{t}
            \langle \xi(s)-y,d\phi(s)\rangle
            \leq0.
      \end{equation}
      For $i = 1,2$, let $(\xi_i,\phi_i)$ be the solution of the Skorokhod problem associated with two continuous paths $w_{i} \in C\big([0,T];\R^{d}\big)$ satisfying $w_{i}(0) \in \overline{D}$. Then
      \begin{equation}\label{eq:reflection-two-paths}
            \int_{0}^{t}\big\langle
            \xi_1(s)-\xi_2(s), d\phi_1(s)-d\phi_2(s) \big\rangle
            \leq0,
            \quad t \in [0,T].
      \end{equation}
\end{lemma}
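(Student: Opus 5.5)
The plan is to derive both inequalities from the structure of the reflection term in the Skorokhod problem, namely $d\phi(s)=\mathbf{n}(s)\,d|\phi|(s)$ with $\mathbf{n}(s)\in N_{\xi(s)}$ $d|\phi|$-a.e. Combined with the normal-cone characterization \eqref{eq:normal-cone-characterization}, this gives a pointwise sign condition that can then be integrated.

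For \eqref{eq:reflection-one-path}, fix $y\in\overline{D}$. By condition 3 of the Skorokhod problem, the Stieltjes integral is
\begin{equation*}
\int_0^t\langle \xi(s)-y,d\phi(s)\rangle=\int_0^t\langle \xi(s)-y,\mathbf{n}(s)\rangle\,d|\phi|(s).
\end{equation*}
This is well defined because $\xi$ is continuous and $\phi$ has bounded variation. For $d|\phi|$-a.e. $s$ we have $\mathbf{n}(s)\in N_{\xi(s)}$, and $\xi(s)\in\partial D$ on the support of $d|\phi|$ by condition 4; this is the only situation in which $N_{\xi(s)}$ is meaningfully defined. Applying \eqref{eq:normal-cone-characterization} at the boundary point $x=\xi(s)$ with the point $y\in\overline{D}$ gives $\langle y-\xi(s),\mathbf{n}(s)\rangle\ge0$. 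The integrand is therefore nonpositive $d|\phi|$-a.e., and since $d|\phi|$ is a nonnegative measure the integral is $\le 0$.

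For \eqref{eq:reflection-two-paths}, expand the integral as
\begin{equation*}
\int_0^t\langle \xi_1-\xi_2,d\phi_1\rangle+\int_0^t\langle \xi_2-\xi_1,d\phi_2\rangle.
\end{equation*}
Each term has the same form as before, except that the fixed point $y$ is replaced by the time-dependent point $\xi_2(s)$ (respectively $\xi_1(s)$), which lies in $\overline{D}$ for every $s$. The argument above never used that $y$ is constant: the sign condition holds pointwise in $s$. So for $d|\phi_1|$-a.e. $s$ we get $\langle \xi_2(s)-\xi_1(s),\mathbf{n}_1(s)\rangle\ge0$, and symmetrically for the second term. Both integrals are therefore nonpositive, and so is their sum.

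The main point requiring care is justifying the integrand rewrite and the pointwise use of the normal cone when the "test point" varies with $s$. This is handled by noting that $\xi_2$ is continuous and hence Borel measurable, so $s\mapsto\langle\xi_1(s)-\xi_2(s),\mathbf{n}_1(s)\rangle$ is measurable and bounded on $[0,t]$; the Lebesgue--Stieltjes integral against $d\phi_1=\mathbf{n}_1\,d|\phi_1|$ then equals the integral of this function against $d|\phi_1|$. No further obstacle is expected; the convexity of $D$ enters only through \eqref{eq:normal-cone-characterization}.
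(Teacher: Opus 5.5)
Your proposal is correct and follows essentially the same route as the paper. You write $d\phi=\mathbf{n}\,d|\phi|$, use the boundary support and the normal-cone characterization to get a $d|\phi|$-a.e.\ sign condition (with $y$ replaced pointwise by $\xi_2(s)$ or $\xi_1(s)$ for the two-path inequality), and integrate against the nonnegative variation measure; your measurability remark only makes explicit what the paper leaves implicit.
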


\begin{proof}
      By the definition of the Skorokhod problem, we have
      \begin{equation*}
            d\phi(s)
            =
            \mathbf{n}(s)\,d|\phi|(s),
            \quad \mathbf{n}(s)\in N_{\xi(s)}
            \quad d|\phi|\text{-a.e.}
      \end{equation*}
      Since the variation measure $d|\phi|$ is supported on the boundary, one has $\xi(s)\in\partial D$ for $d|\phi|$-almost every $s$. Hence, by \eqref{eq:normal-cone-characterization}, we have that for every $y \in \overline{D}$,
      \begin{equation*}
            \langle \xi(s)-y,\mathbf{n}(s)\rangle
            \leq0
            \quad
            d|\phi|\text{-a.e.},
      \end{equation*}
      which gives $\langle \xi(s)-y,d\phi(s)\rangle = \langle \xi(s)-y,\mathbf{n}(s)\rangle \,d|\phi|(s) \leq 0$. Integrating over $[0,t]$ proves \eqref{eq:reflection-one-path}. For the second assertion, write
      \begin{equation*}
            d\phi_i(s)
            =
            \mathbf{n}_i(s)\,d|\phi_i|(s),
            \quad
            \mathbf{n}_i(s)\in N_{\xi_i(s)}
            \quad
            d|\phi_i|\text{-a.e.},
            \quad
            i=1,2.
      \end{equation*}
      Since $\xi_2(s) \in\overline{D}$, the normal-cone characterization implies
      \begin{equation*}
            \langle \xi_{1}(s) - \xi_{2}(s),
            \mathbf{n}_{1}(s) \rangle
            \leq0
            \quad
            d|\phi_1|\text{-a.e.}
      \end{equation*}
      Similarly, since $\xi_1(s) \in \overline{D}$,
      \begin{equation*}
            \langle \xi_{2}(s) - \xi_{1}(s),
            \mathbf{n}_{2}(s)
            \rangle
            \leq0
            \quad
            d|\phi_2|\text{-a.e.}.
      \end{equation*}
      It follows that
      \begin{align*}
            &~\big\langle \xi_1(s)-\xi_2(s),
            d\phi_1(s)-d\phi_2(s) \big\rangle
            \\=&~
            \langle \xi_1(s)-\xi_2(s), \mathbf{n}_{1}(s) \rangle \,d|\phi_1|(s)
            - 
            \langle \xi_1(s)-\xi_2(s),\mathbf{n}_{2}(s) \rangle \,d|\phi_2|(s)
            \leq
            0.
      \end{align*}
      Integrating over $[0,t]$ proves \eqref{eq:reflection-two-paths}.
\end{proof}

\subsection{RSDE and coefficient assumptions}\label{subsec:rsde-coefficients}
We now give the precise formulation of the RSDE and state the assumptions on the coefficients. Let $\big(\Omega, \mathcal{F}, \{\mathcal{F}_{t}\}_{t \geq 0}, \mathbb{P}\big)$ be a complete filtered probability space satisfying the usual conditions, and let $\{W(t)\}_{t \geq 0}$ be an $m$-dimensional standard Brownian motion adapted to $\{\mathcal{F}_{t}\}_{t \geq 0}$. Let $b \colon \R^{d} \to \R^{d}$, $\sigma \colon \R^{d} \to \R^{d \times m}$, and let $x_{0}\in\overline{D}$ be deterministic. Consider 
\begin{equation}\label{eq:reflected-sde}
      X(t)
      =
      x_{0}
      +
      \int_{0}^{t} b\big(X(s)\big) \,ds
      +
      \int_{0}^{t} \sigma\big(X(s)\big)\,dW(s)
      +
      K(t), \quad t \in [0,T],
\end{equation}
where $\{X(t)\}_{t \in [0,T]}$ is an $\overline{D}$-valued continuous adapted process, and $\{K(t)\}_{t \in [0,T]}$ is a continuous adapted process of bounded variation satisfying $K(0) = 0$. More precisely, there exists a progressively measurable process $\mathbf{n}_{X} \colon [0,T] \times \Omega \to \R^{d}$ such that
\begin{equation}\label{eq:exact-reflection-decomposition}
      K(t)
      =
      \int_{0}^{t} \mathbf{n}_{X}(s)\,d|K|(s),
      \quad \mathbf{n}_{X}(s) \in N_{X(s)}
      \quad d|K|\text{-a.e.},
\end{equation}
and the variation measure of $K$ is supported on the boundary:
\begin{equation}\label{eq:exact-reflection-support}
      |K|_{[0,t]}
      =
      \int_{0}^{t} \mathbf{1}_{\{X(s)\in\partial D\}} \,d|K|(s),
      \quad t\in[0,T].
\end{equation}
A pair $(X,K)$ satisfying \eqref{eq:reflected-sde}--\eqref{eq:exact-reflection-support} is called a strong solution of the RSDE.

The following conditions allow the drift and diffusion coefficients to grow super-linearly while preserving a coupled dissipative structure.

\begin{assumption}[Coefficient conditions]\label{ass:coefficients}
      The coefficients $b$ and $\sigma$ are continuous and satisfy the following conditions.

      \begin{enumerate}
            \item[(i)] There exist constants $L>0$ and $\eta > \frac{1}{2}$ such that
            \begin{equation}\label{eq:coupled-monotonicity}
                  \big\langle x-y,b(x)-b(y) \big\rangle
                  +
                  \eta \big\|\sigma(x)-\sigma(y)\big\|_{\mathrm{HS}}^{2}
                  \leq
                  L|x-y|^{2}, \quad x,y\in\R^{d}.
            \end{equation}

            \item[(ii)] There exist constants $L>0$ and $q_{b}\geq0$ such that
            \begin{equation}\label{eq:drift-polynomial-lipschitz}
                  |b(x)-b(y)|
                  \leq
                  L\big(1 + |x|^{q_{b}} + |y|^{q_{b}}\big)|x-y|,
                  \quad x,y \in \R^{d}.
            \end{equation}

            \item[(iii)] There exist constants $L>0$ and $q_{\sigma}\geq0$ such that
            \begin{equation}\label{eq:diffusion-polynomial-lipschitz}
                  \big\|\sigma(x)-\sigma(y)\big\|_{\mathrm{HS}}
                  \leq
                  L\big(1 + |x|^{q_{\sigma}} + |y|^{q_{\sigma}}\big)|x-y|,
                  \quad x,y \in \R^{d}.
            \end{equation}
      \end{enumerate}
\end{assumption}

The coupled monotonicity condition \eqref{eq:coupled-monotonicity} allows the dissipativity of the drift coefficient to compensate for the super-linear growth of the diffusion coefficient. In particular, neither $b$ nor $\sigma$ is required to be globally Lipschitz continuous. Conditions \eqref{eq:drift-polynomial-lipschitz} and \eqref{eq:diffusion-polynomial-lipschitz} imply the polynomial growth estimates
\begin{equation}\label{eq:coefficient-growth}
      |b(x)| \leq C\big(1+|x|^{q_{b}+1}\big),
      \quad
      \|\sigma(x)\|_{\mathrm{HS}} \leq C\big(1+|x|^{q_{\sigma}+1}\big),
      \quad x \in \R^{d}.
\end{equation}
For later use, we define the maximal moment index supplied by  \eqref{eq:coupled-monotonicity} as
\begin{equation}\label{eq:maximal-moment-index}
      p_{\ast} := \eta+\frac{1}{2}.
\end{equation}
The role of the index $p_\ast$ is clarified by the following coercivity estimate, which is a direct consequence of the coupled monotonicity condition.

\begin{lemma}\label{lem:high-order-coercivity}
      Suppose that Assumption~\ref{ass:coefficients} holds and let $1 \leq p < p_{\ast}$. Then there exist constants $\varepsilon_{p} > 0$ and $C_{p} > 0$ such that
      \begin{equation}\label{eq:high-order-coercivity}
            2\big\langle x-x_{\ast},b(x) \big\rangle
            +
            \big(2p-1+\varepsilon_{p}\big)\|\sigma(x)\|_{\mathrm{HS}}^{2}
            \leq
            C_{p}\big(1 + |x-x_{\ast}|^{2}\big),
            \quad x \in \R^{d}.
      \end{equation}
\end{lemma}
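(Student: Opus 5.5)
We apply the coupled monotonicity condition \eqref{eq:coupled-monotonicity} with $y=x_\ast$ and then absorb the cross terms involving $b(x_\ast)$ and $\sigma(x_\ast)$, which are fixed vectors. Since $p<p_\ast=\eta+\tfrac12$, we have $2p-1<2\eta$. We therefore choose $\varepsilon_p>0$ small enough that $\theta:=2p-1+\varepsilon_p<2\eta$, for instance $\varepsilon_p=\eta-p+\tfrac12>0$, which gives $\theta = p-\tfrac12+\eta<2\eta$. This is the only point where the restriction $p<p_\ast$ enters. The hypothesis $p\ge1$ ensures $\theta>0$, though only the upper bound matters.

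Write $\sigma(x)=\big(\sigma(x)-\sigma(x_\ast)\big)+\sigma(x_\ast)$. By Young's inequality, for any $\delta>0$,
\begin{equation*}
\|\sigma(x)\|_{\mathrm{HS}}^2\le(1+\delta)\|\sigma(x)-\sigma(x_\ast)\|_{\mathrm{HS}}^2+(1+\delta^{-1})\|\sigma(x_\ast)\|_{\mathrm{HS}}^2 .
\end{equation*}
We pick $\delta>0$ with $\theta(1+\delta)\le 2\eta$, which is possible since $\theta<2\eta$. Similarly, we split $2\langle x-x_\ast,b(x)\rangle=2\langle x-x_\ast,b(x)-b(x_\ast)\rangle+2\langle x-x_\ast,b(x_\ast)\rangle$. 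The second piece is bounded by $|x-x_\ast|^2+|b(x_\ast)|^2$.

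Combining these bounds gives
\begin{equation*}
2\langle x-x_\ast,b(x)\rangle+\theta\|\sigma(x)\|_{\mathrm{HS}}^2\le 2\Big(\langle x-x_\ast,b(x)-b(x_\ast)\rangle+\eta\|\sigma(x)-\sigma(x_\ast)\|_{\mathrm{HS}}^2\Big)+|x-x_\ast|^2+C,
\end{equation*}
where $C=|b(x_\ast)|^2+\theta(1+\delta^{-1})\|\sigma(x_\ast)\|_{\mathrm{HS}}^2$. Applying \eqref{eq:coupled-monotonicity} to the bracket bounds it by $2L|x-x_\ast|^2$. This yields \eqref{eq:high-order-coercivity} with $C_p=\max\{2L+1,C\}$.

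There is no real obstacle here. The only care needed is choosing $\varepsilon_p$ and $\delta$ so that the Young-inequality loss $(1+\delta)$ in the diffusion term still fits under the coefficient $2\eta$ supplied by \eqref{eq:coupled-monotonicity}. This is exactly the strict inequality $p<p_\ast$. The polynomial Lipschitz conditions (ii)–(iii) are not needed; continuity alone makes $b(x_\ast)$ and $\sigma(x_\ast)$ finite.
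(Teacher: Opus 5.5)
Your proof is correct and follows essentially the same route as the paper. You apply \eqref{eq:coupled-monotonicity} at $y=x_\ast$, split $\sigma(x)=(\sigma(x)-\sigma(x_\ast))+\sigma(x_\ast)$ with a weighted Young inequality, and absorb $2\langle x-x_\ast,b(x_\ast)\rangle$ by Young; the only difference is bookkeeping, since the paper fixes $\theta_p$ with $2p-1<2\eta(1-\theta_p)$ and then defines $\varepsilon_p$, whereas you fix $\varepsilon_p$ first and then choose $\delta$ (your $1+\delta$ plays the role of the paper's $1/(1-\theta_p)$).
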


\begin{proof}
      Fix $p \in [1,p_{\ast})$. Since $2p-1 < 2\eta$, we may choose a constant $\theta_{p}\in(0,1)$ such that
      \begin{equation}\label{eq:theta-choice}
            2p-1 < 2\eta\big(1 - \theta_{p}\big).
      \end{equation}
      For arbitrary matrices $A,B\in\R^{d\times m}$, Young's inequality yields
      \begin{equation}\label{eq:diffusion-lower-bound}
            \|A\|_{\mathrm{HS}}^{2}
            =
            \|A-B+B\|_{\mathrm{HS}}^{2}
            \leq
            \frac{1}{1-\theta_{p}}\|A-B\|_{\mathrm{HS}}^{2}
            +
            \frac{1}{\theta_{p}}\|B\|_{\mathrm{HS}}^{2}.
      \end{equation}
      Taking $y = x_{\ast}$ in \eqref{eq:coupled-monotonicity} and applying \eqref{eq:diffusion-lower-bound} with $A = \sigma(x)$ and $B = \sigma(x_{\ast})$, we obtain
      \begin{align*}
            2\big\langle x-x_{\ast},b(x)-b(x_{\ast}) \big\rangle
            +
            2\eta \big(1-\theta_{p}\big)\|\sigma(x)\|_{\mathrm{HS}}^{2}
            \leq
            2L|x-x_{\ast}|^{2}
            +
            \frac{2\eta\big(1-\theta_{p}\big)}{\theta_{p}}
            \|\sigma(x_{\ast})\|_{\mathrm{HS}}^{2}.
      \end{align*}
      Moreover, Young's inequality implies $2\big\langle x-x_{\ast},b(x_{\ast}) \big\rangle \leq |x-x_{\ast}|^{2} + |b(x_{\ast})|^{2}$. It follows that
      \begin{equation*}
            2\big\langle x-x_{\ast},b(x) \big\rangle
            +
            2\eta\big(1-\theta_{p}\big) \|\sigma(x)\|_{\mathrm{HS}}^{2}
            \leq
            C_{p}\big(1 + |x-x_{\ast}|^{2}\big)
      \end{equation*}
      with $$C_{p} := 1+2L + |b(x_{\ast})|^{2}+\frac{2\eta\big(1-\theta_{p}\big)}{\theta_{p}} \|\sigma(x_{\ast})\|_{\mathrm{HS}}^{2}.$$
      Define $\varepsilon_{p} := 2\eta\big(1 - \theta_{p}\big) - (2p-1)$. By \eqref{eq:theta-choice}, one has $\varepsilon_{p}>0$, and thus \eqref{eq:high-order-coercivity}.
\end{proof}

\subsection{Well-posedness and moment estimates}\label{subsec:well-posedness}

We next establish the well-posedness of the RSDE \eqref{eq:reflected-sde}--\eqref{eq:exact-reflection-support} and derive
the moment estimates needed in the numerical analysis. Since both the drift and diffusion coefficients may grow super-linearly, the proof is based on localization and a consistency argument for truncated equations. In what follows, we use the Lyapunov function
\begin{equation}\label{eq:lyapunov-function}
      V(x)
      :=
      1+|x-x_{\ast}|^{2},
      \quad
      x\in\R^{d}.
\end{equation}

\begin{theorem}\label{thm:well-posedness}%[Well-posedness and moment estimates]
      Suppose that Assumptions~\ref{ass:domain} and \ref{ass:coefficients} hold. Then the RSDE \eqref{eq:reflected-sde}--%
      \eqref{eq:exact-reflection-support} admits a unique global strong solution $(X,K)$. Moreover, for every $1 \leq p < p_{\ast}$ and every $T > 0$, there exists a constant $C_{p,T} > 0$ such that
      \begin{equation}\label{eq:exact-solution-sup-moment}
            \E\left[\sup_{0 \leq t \leq T} V^{p} \big(X(t)\big)\right]
            \leq
            C_{p,T} V^{p}(x_{0}).
      \end{equation}
      Consequently,
      \begin{equation}\label{eq:exact-solution-absolute-moment}
            \E\left[\sup_{0 \leq t \leq T}|X(t)|^{2p}\right]
            \leq
            C_{p,T}\big(1 + |x_{0}|^{2p}\big).
      \end{equation}
\end{theorem}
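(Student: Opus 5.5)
We prove existence and uniqueness by localization, and then obtain the moment bound by an Itô computation for $V^{p}(X)$ that uses the coercivity estimate of Lemma~\ref{lem:high-order-coercivity} together with the one-path reflection inequality \eqref{eq:reflection-one-path} with $y=x_\ast$.

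\emph{Well-posedness.} For $R>0$ let $\pi_R$ denote the radial projection onto $\overline{B(0,R)}$, and set $b_R:=b\circ\pi_R$ and $\sigma_R:=\sigma\circ\pi_R$. By \eqref{eq:drift-polynomial-lipschitz} and \eqref{eq:diffusion-polynomial-lipschitz}, both $b_R$ and $\sigma_R$ are globally Lipschitz. The classical theory for RSDEs in convex domains with Lipschitz coefficients (Tanaka, Lions--Sznitman) then yields a unique strong solution $(X_R,K_R)$. Put $\tau_R:=\inf\{t: |X_R(t)|\ge R\}$. Pathwise uniqueness for the truncated equations gives consistency: $X_R=X_{R'}$ on $[0,\tau_R\wedge\tau_{R'}]$ for $R\le R'$. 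Hence the limit $X:=\lim_R X_R$ defines a solution up to $\tau_\infty:=\lim_R\tau_R$, with $K$ defined accordingly; the Skorokhod properties \eqref{eq:exact-reflection-decomposition}--\eqref{eq:exact-reflection-support} hold on each $[0,\tau_R]$.

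Uniqueness for the untruncated equation goes as follows. Take two solutions, stop them at the exit time from $B(0,R)$, and apply Itô's formula to $|X^1-X^2|^2$. The cross-reflection term is nonpositive by \eqref{eq:reflection-two-paths}. Condition \eqref{eq:coupled-monotonicity} with $2\eta>1$ bounds the drift-plus-Itô-correction terms by $2L|X^1-X^2|^2$. Gronwall's inequality then gives $X^1=X^2$ up to the stopping time, and letting $R\to\infty$ completes the argument once nonexplosion is known.

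\emph{Moment bound and nonexplosion.} Apply Itô's formula to $V^p(X(t\wedge\tau_R))$. The drift contribution is
\[
pV^{p-1}\big(2\langle X-x_\ast,b(X)\rangle+\|\sigma(X)\|_{\mathrm{HS}}^2\big)+2p(p-1)V^{p-2}|\sigma(X)^{\top}(X-x_\ast)|^2,
\]
and by Cauchy--Schwarz it is at most
\[
pV^{p-1}\big(2\langle X-x_\ast,b(X)\rangle+(2p-1)\|\sigma(X)\|_{\mathrm{HS}}^2\big).
\]
The reflection contribution is $2pV^{p-1}\langle X-x_\ast,dK\rangle$. Since $V^{p-1}\ge0$ and $\langle X(s)-x_\ast,\mathbf n_X(s)\rangle\le0$ $d|K|$-a.e. (this is the pointwise form of \eqref{eq:reflection-one-path}), this contribution is nonpositive.

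Lemma~\ref{lem:high-order-coercivity} now bounds the drift contribution by
\[
pV^{p-1}\big(C_pV-\varepsilon_p\|\sigma\|_{\mathrm{HS}}^2\big)\le C_pV^p-\varepsilon_p pV^{p-1}\|\sigma\|_{\mathrm{HS}}^2.
\]
Taking expectations, the martingale part vanishes at the stopping time, so Gronwall's inequality yields $\E V^p(X(t\wedge\tau_R))\le e^{C t}V^p(x_0)$. Since $V\ge R^2$-ish on $\{\tau_R\le T\}$, this gives $\mathbb P(\tau_R\le T)\lesssim R^{-2p}\to0$. Therefore $\tau_\infty=\infty$ a.s. and the solution is global. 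Fatou's lemma transfers the bound to $X$ itself.

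\emph{Supremum inside the expectation.} This is the main obstacle. We apply the Burkholder--Davis--Gundy inequality to the stochastic integral $M=\int 2pV^{p-1}\langle X-x_\ast,\sigma\,dW\rangle$. Its quadratic variation is at most
\[
\int 4p^2V^{2p-1}\|\sigma\|_{\mathrm{HS}}^2\,ds,
\]
so
\[
\E\sup|M|\le C\,\E\Big(\sup V^p\int V^{p-1}\|\sigma\|_{\mathrm{HS}}^2\Big)^{1/2}\le\tfrac12\E\sup V^p+C\,\E\int V^{p-1}\|\sigma\|_{\mathrm{HS}}^2\,ds.
\]
The last integral is controlled precisely by the $\varepsilon_p$-dissipation term retained above. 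Indeed, integrating the Itô inequality in expectation gives
\[
\varepsilon_p p\,\E\int_0^{T\wedge\tau_R}V^{p-1}\|\sigma\|_{\mathrm{HS}}^2\,ds\le C_{p,T}V^p(x_0).
\]
All quantities are finite on $[0,\tau_R]$, so the $\frac12\E\sup V^p$ term can be absorbed into the left-hand side. Letting $R\to\infty$ by monotone convergence gives \eqref{eq:exact-solution-sup-moment}. Finally, \eqref{eq:exact-solution-absolute-moment} follows from $|x|^{2p}\le C(V^p(x)+|x_\ast|^{2p})$ and $V^p(x_0)\le C(1+|x_0|^{2p})$.
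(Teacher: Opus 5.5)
Your proof is correct. The well-posedness part (radial truncation, consistency, the stopped It\^{o} estimate for $V^{p}$, nonexplosion via $\mathbb{P}(\tau_R\le T)\to 0$, and uniqueness via \eqref{eq:reflection-two-paths} and \eqref{eq:coupled-monotonicity}) matches the paper's Steps~1--3 and~5. One small point: the consistency $X_R=X_{R'}$ on $[0,\tau_R\wedge\tau_{R'}]$ needs uniqueness up to a stopping time, not just global pathwise uniqueness. The paper gets this from exactly the stopped It\^{o}--Gronwall computation you give later for uniqueness.

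The genuine difference is how you move the supremum inside the expectation.

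\emph{The paper's route (Step~4).} It never invokes Burkholder--Davis--Gundy. It fixes an intermediate exponent $p<P<p_\ast$ and uses only the stopped bound $\sup_t\E[V^{P}(X(t\wedge\vartheta_R))]\le CV^{P}(x_0)$, with the $\varepsilon_P$-term discarded. From this it gets the tail estimate $\mathbb{P}(\sup_{t\le T}V(X(t))\ge R)\le CV^{P}(x_0)R^{-P}$ and integrates it via the layer-cake formula. That integral converges precisely because $p<P$.

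\emph{Your route.} You work at the fixed exponent $p$ and apply Burkholder--Davis--Gundy to the martingale part. You factor its quadratic variation as $V^{2p-1}\|\sigma\|_{\mathrm{HS}}^2=V^{p}\cdot V^{p-1}\|\sigma\|_{\mathrm{HS}}^2$. You then control $\E\int_0^{T\wedge\tau_R}V^{p-1}\|\sigma\|_{\mathrm{HS}}^2\,ds$ by the retained dissipation $-\varepsilon_p pV^{p-1}\|\sigma\|_{\mathrm{HS}}^2$ from Lemma~\ref{lem:high-order-coercivity}. That step is indispensable here, because a super-linear $\sigma$ does not satisfy $\|\sigma\|_{\mathrm{HS}}^2\le CV$. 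Your absorption of $\tfrac12\E\sup V^p$ is legitimate since $X$ is bounded on $[0,\tau_R]$.

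\emph{What each buys.} Your argument is the classical one: it needs no exponent gap and gives the integrated dissipation bound as a by-product. The paper's argument is softer, using only Markov's inequality and the stopped expectation bound. The same tail argument is reused for the numerical scheme in Propositions~\ref{prop:stopped-grid-moment}--\ref{prop:maximal-grid-moment}. There only a one-step conditional Lyapunov inequality at the grid points and a discrete stopping index are available, so a BDG-type absorption would be more awkward.
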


\begin{proof}
      We divide the proof into five steps.

      \medskip

      \noindent
      \textbf{Step 1. Construction of globally Lipschitz truncated equations.}
      Choose an integer $n_{0}$ such that $n_{0} > |x_{0}| \vee |x_{\ast}|$. For every integer $n \geq n_{0}$, we define
      \begin{equation*}
            \Pi_{n}(x)
            :=
            \begin{cases}
                  x, & |x|\leq n,
                  \\[2mm]\displaystyle
                  \frac{n}{|x|}x,& |x|>n.
            \end{cases}
      \end{equation*}
      The mapping $\Pi_{n}$ is the metric projection of $\R^{d}$ onto the closed ball $\overline{B}_{n} := \big\{x \in \R^{d} : |x| \leq n\big\}$. Since $B_n$ is closed and convex, the metric projection $\Pi_n=P_{B_n}$ is firmly nonexpansive, and hence nonexpansive; see, e.g., \cite[Proposition~4.8]{bauschke2011convex}. Therefore,
      \begin{equation}\label{eq:projection-nonexpansive}
            |\Pi_{n}(x)-\Pi_{n}(y)| \leq |x-y|,
            \quad x,y \in \R^{d}.
      \end{equation}
      Define the truncated coefficients $b_{n}(x) := b\big(\Pi_{n}(x)\big)$ and $\sigma_{n}(x) := \sigma\big(\Pi_{n}(x)\big)$ for all $x \in \R^{d}$. It follows immediately that $b_{n}(x) = b(x), \sigma_{n}(x) = \sigma(x)$ for all $x \in \R^{d}$ with $|x| \leq n$. By \eqref{eq:drift-polynomial-lipschitz} and \eqref{eq:projection-nonexpansive}, we obtain
      \begin{align*}
            |b_{n}(x)-b_{n}(y)|
            =
            \left|b\big(\Pi_{n}(x)\big) - b\big(\Pi_{n}(y)\big)\right|
            \leq
            L\big(1 + 2n^{q_{b}}\big)|\Pi_{n}(x)-\Pi_{n}(y)|
            \leq
            L\big(1 + 2n^{q_{b}}\big)|x-y|,
      \end{align*}
      and similarly $\|\sigma_{n}(x)-\sigma_{n}(y)\|_{\mathrm{HS}} \leq L\big(1 + 2n^{q_{\sigma}}\big)|x-y|$, i.e., both $b_{n}$ and $\sigma_{n}$ are globally Lipschitz continuous. For each $n\geq n_{0}$, consider the truncated RSDE
      \begin{equation}\label{eq:truncated-reflected-sde}
            X^{n}(t)
            =
            x_{0}
            +
            \int_{0}^{t} b_{n}\big(X^{n}(s)\big)\,ds
            +
            \int_{0}^{t} \sigma_{n}\big(X^{n}(s)\big)\,dW(s)
            +
            K^{n}(t), \quad t \geq 0.
      \end{equation}
      Here, $X^{n}$ takes values in $\overline{D}$, and $K^{n}$ is a continuous adapted process of bounded variation satisfying
      \begin{equation}\label{eq:truncated-reflection-decomposition}
            K^{n}(t)
            =
            \int_{0}^{t} \mathbf{n}^{n}(s)\,d|K^{n}|(s),
            \quad \mathbf{n}^{n}(s)\in N_{X^{n}(s)}
            \quad d|K^{n}|\text{-a.e.},
      \end{equation}
      together with
      \begin{equation}\label{eq:truncated-reflection-support}
            |K^{n}|_{[0,t]}
            =
            \int_{0}^{t} \mathbf{1}_{\{X^{n}(s)\in\partial D\}} \,d|K^{n}|(s).
      \end{equation}
      Since $b_{n}$ and $\sigma_{n}$ are globally Lipschitz continuous, the classical well-posedness theory for the RSDEs on convex domains yields a unique global strong solution $(X^{n},K^{n})$ to \eqref{eq:truncated-reflected-sde}--%
      \eqref{eq:truncated-reflection-support}; see, e.g., \cite{tanaka1979stochastic}. 

      \medskip

      \noindent\textbf{Step 2. Consistency of the truncated solutions.} Let $m > n \geq n_{0}$ and define
      \begin{equation*}
            \tau_{n}^{n}
            :=
            \inf\big\{t \geq 0: |X^{n}(t)| \geq n \big\},
            \quad
            \tau_{n}^{m}
            :=
            \inf\big\{t \geq 0: |X^{m}(t)| \geq n\big\},
      \end{equation*}
      where $\inf\varnothing := \infty$, and set $\rho_{n}^{n,m} := \tau_{n}^{n} \wedge \tau_{n}^{m}$. For $t < \rho_{n}^{n,m}$, both $X^{n}(t)$ and $X^{m}(t)$ belong to $B(0,n)$. Hence,
      \begin{gather*}
            b_{n}\big(X^{n}(t)\big)
            =
            b\big(X^{n}(t)\big),
            \quad
            \sigma_{n}\big(X^{n}(t)\big)
            =
            \sigma\big(X^{n}(t)\big),
            \\
            b_{m}\big(X^{m}(t)\big)
            =
            b\big(X^{m}(t)\big),
            \quad
            \sigma_{m}\big(X^{m}(t)\big)
            =
            \sigma\big(X^{m}(t)\big).
      \end{gather*}
      Applying It\^{o}'s formula to $|X^{n}\big(t\wedge\rho_{n}^{n,m}\big) - X^{m}\big(t\wedge\rho_{n}^{n,m}\big)|^{2}$ and using \eqref{eq:reflection-two-paths}, we obtain
      \begin{align*}
            &~\E\big[\left|X^{n}\big(t\wedge\rho_{n}^{n,m}\big)
            - X^{m}\big(t\wedge\rho_{n}^{n,m}\big)\right|^{2}\big]
            \\\leq&~
            \E\bigg[\int_{0}^{t\wedge\rho_{n}^{n,m}}
            \big(2\big\langle X^{n}(s)-X^{m}(s),
            b\big(X^{n}(s)\big)-b\big(X^{m}(s)\big) \big\rangle
            \\&~+
            \big\|\sigma\big(X^{n}(s)\big) - \sigma\big(X^{m}(s)\big)
            \big\|_{\mathrm{HS}}^{2}\big)\,ds\bigg]
            \\\leq&~
            2L\int_{0}^{t}\E\big[\left|
            X^{n}\big(s\wedge\rho_{n}^{n,m}\big)
            - X^{m}\big(s\wedge\rho_{n}^{n,m}\big)\right|^{2}\big]\,ds
      \end{align*}
      due to \eqref{eq:coupled-monotonicity}. Gronwall's inequality yields
      \begin{equation}\label{eq:truncated-solutions-consistency}
            X^{n}(t) = X^{m}(t),
            \quad 0 \leq t \leq \rho_{n}^{n,m},
            \quad \mathbb{P}\text{-a.s.}
      \end{equation}
      By the continuity of $X^{n}$ and $X^{m}$, \eqref{eq:truncated-solutions-consistency} implies
      \begin{equation*}
            \tau_{n}^{n} = \tau_{n}^{m},
            \quad
            \mathbb{P}\text{-a.s.}
      \end{equation*}
      We denote their common value by $\tau_{n}$. Subtracting the two reflected equations and using
      \eqref{eq:truncated-solutions-consistency}, we also obtain
      \begin{equation}\label{eq:truncated-reflections-consistency}
            K^{n}(t)
            =
            K^{m}(t),
            \quad
            0\leq t\leq\tau_{n},
            \quad
            \mathbb{P}\text{-a.s.}
      \end{equation}
      The consistency relations may be assumed to hold simultaneously for all integers $m > n \geq n_{0}$ on an event of probability one. Moreover, $\tau_{n} \leq \tau_{n+1}$. Define $\tau_{\infty} := \lim_{n \to \infty} \tau_{n}$. For every  $t < \tau_{\infty}$, choose $n$ sufficiently large so that $t < \tau_{n}$, and define 
      \begin{equation}\label{eq:maximal-local-solution}
            X(t) := X^{n}(t),
            \quad
            K(t) := K^{n}(t).
      \end{equation}
      By \eqref{eq:truncated-solutions-consistency} and \eqref{eq:truncated-reflections-consistency}, this definition is
      independent of the choice of $n$. The pair $(X,K)$ is therefore a maximal local strong solution of
      \eqref{eq:reflected-sde}--\eqref{eq:exact-reflection-support} on $[0,\tau_{\infty})$.

      \medskip

      \noindent\textbf{Step 3. Stopped moment estimates and nonexplosion.}
      Fix $1 \leq P < p_{\ast}$. Set $Z^{n}(t) := X^{n}(t)-x_{\ast}$. Since $b_{n}\big(X^{n}(s)\big) = b\big(X^{n}(s)\big),          \sigma_{n}\big(X^{n}(s)\big) = \sigma\big(X^{n}(s)\big)$ for $s \leq \tau_{n}$, It\^{o}'s formula applied to
      $V^{P}\big(X^{n}(t \wedge \tau_{n})\big)$ gives
      \begin{align}\label{eq:ito-stopped-exact-solution}
            V^{P}\big(X^{n}(t\wedge\tau_{n})\big) 
            =&~
            V^{P}(x_{0})
            +
            P\int_{0}^{t\wedge\tau_{n}}V^{P-1}\big(X^{n}(s)\big)
            \big(2 \big\langle Z^{n}(s),b\big(X^{n}(s)\big) \big\rangle
            +
            \big\|\sigma\big(X^{n}(s)\big)\big\|_{\mathrm{HS}}^{2}\big) \,ds \nonumber
            \\&~+
            2P(P-1) \int_{0}^{t\wedge\tau_{n}} V^{P-2}\big(X^{n}(s)\big)
            \left|\sigma^{\top}\big(X^{n}(s)\big) Z^{n}(s) \right|^{2} \,ds \nonumber
            \\&~+
            2P\int_{0}^{t\wedge\tau_{n}}V^{P-1}\big(X^{n}(s)\big)
            \big\langle Z^{n}(s),\sigma\big(X^{n}(s)\big)\,dW(s) \big\rangle \nonumber
            \\&~+
            2P\int_{0}^{t\wedge\tau_{n}} V^{P-1}\big(X^{n}(s)\big)
            \big\langle Z^{n}(s), dK^{n}(s) \big\rangle.
      \end{align}
      By \eqref{eq:reflection-one-path} with $y = x_{\ast}$, we have
      \begin{equation}\label{eq:exact-reflection-nonpositive}
            \int_{0}^{t\wedge\tau_{n}} V^{P-1}\big(X^{n}(s)\big)
            \big\langle Z^{n}(s),dK^{n}(s) \big\rangle
            \leq
            0.
      \end{equation}
      Moreover,
      \begin{equation*}
            \left|\sigma^{\top}(x)(x-x_{\ast})\right|^{2}
            \leq
            |x-x_{\ast}|^{2}\|\sigma(x)\|_{\mathrm{HS}}^{2}
            \leq
            V(x) \|\sigma(x)\|_{\mathrm{HS}}^{2}.
      \end{equation*}
      Therefore, the sum of the two finite-variation terms in \eqref{eq:ito-stopped-exact-solution} is bounded above by
      \begin{align*}
            P\int_{0}^{t\wedge\tau_{n}}V^{P-1}\big(X^{n}(s)\big)
            \big(2 \big\langle Z^{n}(s),b\big(X^{n}(s)\big) \big\rangle
            +
            (2P-1)\big\|\sigma\big(X^{n}(s)\big)\big\|_{\mathrm{HS}}^{2}\big)\,ds.
      \end{align*}
      By Lemma~\ref{lem:high-order-coercivity}, we have
      \begin{align*}
            2 \langle x-x_{\ast},b(x) \rangle
            +
            (2P-1) \|\sigma(x)\|_{\mathrm{HS}}^{2}
            \leq
            C_{P}V(x) - \varepsilon_{P}\|\sigma(x)\|_{\mathrm{HS}}^{2}
            \leq
            C_{P}V(x).
      \end{align*}
      Taking expectations in \eqref{eq:ito-stopped-exact-solution}, using \eqref{eq:exact-reflection-nonpositive}, and noting that the stopped stochastic integral has zero expectation, we obtain
      \begin{equation*}
            \E \big[V^{P}\big(X^{n}(t\wedge\tau_{n})\big)\big]
            \leq
            V^{P}(x_{0})
            +
            C_{P} \int_{0}^{t} \E\big[
            V^{P}\big(X^{n}(s\wedge\tau_{n})\big)\big] \,ds.
      \end{equation*}
      Gronwall's inequality yields
      \begin{equation}\label{eq:stopped-exact-moment}
            \sup_{0\leq t\leq T} \E
            \big[V^{P}\big(X^{n}(t\wedge\tau_{n})\big)\big]
            \leq
            C_{P,T}V^{P}(x_{0}),
      \end{equation}
      where $C_{P,T}$ is independent of $n$. On the event $\{\tau_{n}\leq T\}$, the continuity of $X^{n}$ gives $|X^{n}(\tau_{n})| =             n$. Hence, $V\big(X^{n}(\tau_{n})\big) = 1 + |X^{n}(\tau_{n})-x_{\ast}|^{2} \geq 1 + (n-|x_{\ast}|)^{2}$. It follows from \eqref{eq:stopped-exact-moment} that
      \begin{equation}\label{eq:truncation-exit-probability}
            \mathbb{P}(\tau_{n}\leq T)
            \leq
            \frac{C_{P,T}V^{P}(x_{0})}
            {\big(1 + (n-|x_{\ast}|)^{2}\big)^{P}}.
      \end{equation}
      Letting $n \to \infty$ in \eqref{eq:truncation-exit-probability} gives $\mathbb{P}\big(\tau_{\infty} \leq T\big) = 0$. Since $T > 0$ is arbitrary, we obtain $\mathbb{P}\big( \tau_{\infty} = \infty\big) = 1$. Thus, the maximal local solution $(X,K)$ constructed in Step~2 is global.

      \medskip

      \noindent\textbf{Step 4. Uniform-in-time moment estimates.} Fix $1 \leq p < p_{\ast}$. Choose an exponent $P$ such that
      \begin{equation}\label{eq:intermediate-moment-index}
            p < P < p_{\ast}.
      \end{equation}
      For $R > V(x_{0})$, define $\vartheta_{R} := \inf\big\{t \geq 0: V\big(X(t)\big) \geq R\big\}$. Repeating the stopped It\^{o} argument from Step~3 gives
      \begin{equation}\label{eq:exact-solution-stopped-P-moment}
            \sup_{0\leq t\leq T}\E\big[
            V^{P}\big(X(t\wedge\vartheta_{R})\big)\big]
            \leq
            C_{P,T}V^{P}(x_{0}),
      \end{equation}
      where the constant is independent of $R$. Define
      \begin{equation*}
            M_{T}
            :=
            \sup_{0\leq t\leq T}
            V\big(X(t)\big).
      \end{equation*}
      By continuity, on the event $\{M_{T}\geq R\}$ one has $V\big(X(\vartheta_{R})\big) = R$. Hence, \eqref{eq:exact-solution-stopped-P-moment} implies
      \begin{equation}\label{eq:exact-solution-tail-estimate}
            \mathbb{P}\big(M_{T}\geq R\big)
            \leq
            \frac{C_{P,T}V^{P}(x_{0})}{R^{P}}.
      \end{equation}
      Set $A_{P,T} := C_{P,T}V^{P}(x_{0})$. Using the tail-integral representation for non-negative random variables (see, e.g., \cite[Exercise~2.2.7]{durrett2019probability}) and \eqref{eq:exact-solution-tail-estimate}, we obtain
      \begin{align*}
            \E\big[M_{T}^{p}\big]
            =&~
            p\int_{0}^{\infty} R^{p-1}\mathbb{P}(M_{T}\geq R) \,dR
            \\\leq&~
            p\int_{0}^{A_{P,T}^{1/P}}R^{p-1}\,dR
            +
            pA_{P,T}\int_{A_{P,T}^{1/P}}^{\infty}R^{p-P-1}\,dR
            \\\leq&~
            C_{p,P,T}A_{P,T}^{p/P}
            \\\leq&~
            C_{p,P,T}V^{p}(x_{0}).
      \end{align*}
      This proves \eqref{eq:exact-solution-sup-moment}. Finally, since $|x|^{2p} \leq C_{p}\big(1 + |x-x_{\ast}|^{2p}\big) \leq C_{p}V^{p}(x)$ and $V^{p}(x_{0}) \leq C_{p}\big(1 + |x_{0}|^{2p}\big)$, estimate \eqref{eq:exact-solution-absolute-moment} follows.

      \medskip

      \noindent\textbf{Step 5. Pathwise uniqueness.} Let $(X,K)$ and $(Y,\widetilde{K})$ be two global strong solutions driven by the same Brownian motion and with the same initial value. For $N \geq 1$, define
      \begin{equation*}
            \theta_{N}
            :=
            \inf\big\{t \geq 0:|X(t)| \vee |Y(t)| \geq N\big\}.
      \end{equation*}
      Applying It\^{o}'s formula to $|X(t\wedge\theta_{N}) - Y(t\wedge\theta_{N})|^{2}$ and using \eqref{eq:reflection-two-paths}, we obtain
      \begin{align*}
            &~\E\big[|X(t\wedge\theta_{N})-Y(t\wedge\theta_{N})|^{2}\big]
            \\\leq&~
            \E\bigg[\int_{0}^{t\wedge\theta_{N}}
            \big(2 \big\langle X(s)-Y(s),
            b\big(X(s)\big)-b\big(Y(s)\big) \big\rangle
            +
            \big\|\sigma\big(X(s)\big) 
            - \sigma\big(Y(s)\big)\big\|_{\mathrm{HS}}^{2}\big)\,ds\bigg]
            \\\leq&~
            2L\int_{0}^{t}\E\big[
            |X(s\wedge\theta_{N}) - Y(s\wedge\theta_{N})|^{2}\big]\,ds.
      \end{align*}
      Gronwall's inequality gives $\E\big[|X(t\wedge\theta_{N}) - Y(t\wedge\theta_{N})|^{2}\big] = 0$. Letting $N \to \infty$ and using the nonexplosion of both solutions, we conclude that $\mathbb{P}(X(t) = Y(t), t \geq 0) = 1$. Subtracting the two reflected equations then yields $\mathbb{P}(K(t) = \widetilde{K}(t), t \geq 0) = 1$. Thus, pathwise uniqueness holds.
\end{proof}

\section{The coupled tamed Euler--Peano scheme and uniform moment estimates}\label{sec:scheme}

This section introduces the coupled tamed Euler--Peano scheme and establishes uniform moment estimates for the numerical solution. We first define the tamed coefficients and the reflected approximation, then derive algebraic properties of the taming mechanism, local increment estimates, and uniform moment bounds. To describe the admissible moment range, we define 
\begin{equation}\label{eq:auxiliary-growth-indices}
      \gamma_{b} := q_{b}+1,
      \quad
      \gamma_{\sigma} := q_{\sigma}+1,
      \quad
      \gamma := \max\big\{\gamma_{b},\gamma_{\sigma}\big\},
      \quad
      \widehat{\gamma} := \max\big\{\gamma_{b},2\gamma_{\sigma}\big\},
\end{equation}
and set
\begin{equation}\label{eq:moment-consumption-index}
      \Lambda\big(q_{b},q_{\sigma}\big)
      :=
      \gamma + \widehat{\gamma}
      =
      \max\big\{q_{b} + 1, q_{\sigma} + 1\big\}
      +
      \max\big\{q_{b} + 1, 2q_{\sigma} + 2\big\}.
\end{equation}
The index $\Lambda(q_{b},q_{\sigma})$ measures the amount of moment integrability required to control simultaneously the freezing defects and the coupled taming defects.

\subsection{The coupled tamed Euler--Peano approximation}\label{subsec:coupled-tamed-approximation}
We first define the temporal grid, the coupled tamed coefficients, and the reflected Euler--Peano approximation. Let $N \in \mathbb{N}$ and consider the uniform temporal partition
\begin{equation*}
      0 = t_{0} < t_{1} < \cdots < t_{N} = T,
      \quad t_{k} = kh,
      \quad h = \frac{T}{N}.
\end{equation*}
Throughout the paper, we assume that $h \in (0,1]$. Define the left-endpoint projection $\kappa_{h}\colon[0,T]\to[0,T]$ by
\begin{equation}\label{eq:left-endpoint-projection}
      \kappa_{h}(t) := t_{k},
      \quad t \in [t_{k},t_{k+1}), k = 0,1,\cdots,N-1.
\end{equation}
% and {\color{red}set $\kappa_{h}(T) := t_{N-1}$}. 
To preserve the coupled coercivity structure between the drift and diffusion coefficients, define
\begin{equation*}
      G(x)
      :=
      |b(x)| + \|\sigma(x)\|_{\mathrm{HS}}^{2},
      \quad
      \lambda_{h}(x)
      :=
      \frac{1}{1+h^{1/2}G(x)}, \quad x \in \R^{d}.
\end{equation*}
The coupled tamed coefficients are defined by
\begin{equation}\label{eq:coupled-tamed-coefficients}
      b_{h}(x) := \lambda_{h}(x)b(x),
      \quad
      \sigma_{h}(x) := \lambda_{h}^{1/2}(x)\sigma(x),
      \quad x \in \R^{d}.
\end{equation}
The same taming factor is applied to the drift and to the squared diffusion coefficient. Consequently, we have that for every $c \geq 0$,
\begin{align}\label{eq:formal-coercivity-preservation}
      2\big\langle x-x_{\ast},b_{h}(x) \big\rangle
      +
      c\|\sigma_{h}(x)\|_{\mathrm{HS}}^{2}
      =
      \lambda_{h}(x) \big(2\big\langle x-x_{\ast},b(x) \big\rangle
      +
      c\|\sigma(x)\|_{\mathrm{HS}}^{2}\big).
\end{align}
This identity is the main reason for using the coupled taming mechanism in \eqref{eq:coupled-tamed-coefficients}. For a continuous adapted process $\{X_{h}(t)\}_{t \in [0,T]}$, define its piecewise constant left-endpoint interpolation by
\begin{equation}\label{eq:frozen-numerical-process}
      \overline{X}_{h}(t)
      :=
      X_{h}\big(\kappa_{h}(t)\big), \quad t \in [0,T].
\end{equation}
With these tamed coefficients, the reflected approximation is defined as follows.

\begin{definition}[Coupled tamed Euler--Peano approximation]\label{def:coupled-tamed-euler-peano}
      The tamed Euler--Peano approximation is a pair $(X_{h},K_{h})$ satisfying
      \begin{equation}\label{eq:coupled-tamed-euler-peano}
            X_{h}(t)
            =
            x_{0}
            +
            \int_{0}^{t} b_{h}\big(\overline{X}_{h}(s)\big) \,ds
            +
            \int_{0}^{t} \sigma_{h}\big(\overline{X}_{h}(s)\big) \,dW(s)
            +
            K_{h}(t), \quad t \in [0,T].
      \end{equation}
      Here, $\{X_{h}(t)\}_{t \in [0,T]}$ is an $\overline{D}$-valued continuous adapted process, and $\{K_{h}(t)\}_{t \in [0,T]}$ is a continuous adapted process of bounded variation satisfying $K_{h}(0)=0$. More precisely, there exists a progressively measurable process $\mathbf{n}_{h} \colon  [0,T] \times \Omega \to \R^{d}$ such that
      \begin{equation}\label{eq:numerical-reflection-decomposition}
            K_{h}(t)
            =
            \int_{0}^{t} \mathbf{n}_{h}(s)\,d|K_{h}|(s),
            \quad \mathbf{n}_{h}(s) \in N_{X_{h}(s)}
            \quad d|K_{h}|\text{-a.e.},
      \end{equation}
      and
      \begin{equation}\label{eq:numerical-reflection-support}
            |K_{h}|_{[0,t]}
            =
            \int_{0}^{t} \mathbf{1}_{\{X_{h}(s)\in\partial D\}} \,d|K_{h}|(s),
            \quad t \in [0,T].
      \end{equation}
\end{definition}

The approximation may equivalently be constructed recursively by solving a Skorokhod problem on each temporal subinterval. Suppose that $X_{h}$ and $K_{h}$ have already been constructed on $[0,t_{k}]$. For $t \in [t_{k},t_{k+1}]$, define the continuous driving path
\begin{align}\label{eq:one-step-driving-path}
      Y_{h}^{k}(t)
      :=
      X_{h}(t_{k})
      +
      b_{h}\big(X_{h}(t_{k})\big)(t-t_{k})
      +
      \sigma_{h}\big(X_{h}(t_{k})\big)\big(W(t)-W(t_{k})\big).
\end{align}
Then $\big(X_{h}(t),K_{h}(t)-K_{h}(t_{k})\big), t \in [t_{k},t_{k+1}]$ is the solution of the Skorokhod problem associated with
$Y_{h}^{k}$, with initial value $X_{h}(t_{k})$. Since the coefficients are frozen on each subinterval, the approximation can be constructed recursively by solving a deterministic Skorokhod problem along each realized driving path. This gives the following
well-posedness result.

\begin{proposition}\label{prop:numerical-well-posedness}
      Suppose that Assumptions~\ref{ass:domain} and \ref{ass:coefficients} hold and that $b$ and $\sigma$ are continuous. Then for every $N \in \mathbb{N}$, the tamed Euler--Peano approximation \eqref{eq:coupled-tamed-euler-peano}--\eqref{eq:numerical-reflection-support} admits a unique continuous adapted solution $(X_{h},K_{h})$ on $[0,T]$.
\end{proposition}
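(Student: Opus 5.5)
The plan is to construct $(X_{h},K_{h})$ by forward induction over the grid intervals $[t_{k},t_{k+1}]$, using the deterministic Skorokhod map $\Gamma$ pathwise. The inductive hypothesis is that on $[0,t_{k}]$ a continuous adapted pair $(X_{h},K_{h})$ satisfying \eqref{eq:coupled-tamed-euler-peano}--\eqref{eq:numerical-reflection-support} has been built. The base case $k=0$ is $X_{h}(0)=x_{0}\in\overline{D}$ and $K_{h}(0)=0$.

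For the inductive step, observe that $X_{h}(t_{k})$ is $\mathcal{F}_{t_{k}}$-measurable and lies in $\overline{D}$. Since $b$ and $\sigma$ are continuous and $\lambda_{h}\in(0,1]$ is continuous, $b_{h}(X_{h}(t_{k}))$ and $\sigma_{h}(X_{h}(t_{k}))$ are finite, $\mathcal{F}_{t_{k}}$-measurable random variables. It follows that the driving path $Y_{h}^{k}$ in \eqref{eq:one-step-driving-path} is, for every $\omega$, a continuous path on $[t_{k},t_{k+1}]$ with $Y_{h}^{k}(t_{k})=X_{h}(t_{k})\in\overline{D}$. We apply the existence and uniqueness of the Skorokhod problem for convex domains to the time-shifted path $s\mapsto Y_{h}^{k}(t_{k}+s)$. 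This produces $(\xi,\phi)$, and we set $X_{h}(t)=\xi(t-t_{k})$ and $K_{h}(t)=K_{h}(t_{k})+\phi(t-t_{k})$. Concatenating with the pair on $[0,t_{k}]$ gives continuity at $t_{k}$, since $\phi(0)=0$ and $\xi(0)=X_{h}(t_{k})$. Summing the interval identities shows that \eqref{eq:coupled-tamed-euler-peano} holds on $[0,t_{k+1}]$, because on $[t_{k},t_{k+1})$ we have $\overline{X}_{h}=X_{h}(t_{k})$ and the integrals are exactly the increments in $Y_{h}^{k}$. The bounded variation, the normal-direction representation \eqref{eq:numerical-reflection-decomposition} and the boundary support \eqref{eq:numerical-reflection-support} all pass to the concatenation, since total variation measures add over disjoint intervals. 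The normal field $\mathbf{n}_{h}$ is defined piecewise.

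Uniqueness follows by the same induction. Suppose $(X_{h},K_{h})$ and $(\tilde X_{h},\tilde K_{h})$ both solve the scheme and agree on $[0,t_{k}]$. Their restrictions to $[t_{k},t_{k+1}]$ then both solve the Skorokhod problem for the same path $Y_{h}^{k}$, so they coincide there by uniqueness of $\Gamma$. Alternatively, \eqref{eq:reflection-two-paths} applied to the difference, whose driving terms cancel, gives $|X_{h}-\tilde X_{h}|^{2}\le 0$.

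The main obstacle is adaptedness and progressive measurability, because the Skorokhod map is applied $\omega$-by-$\omega$. My plan is to use that $\Gamma$ is a continuous, indeed Lipschitz, map on $C([0,t];\R^{d})$ for convex $D$, as in Tanaka's estimates. I would also use the non-anticipativity of $\Gamma$: $\xi|_{[0,t]}$ depends only on $w|_{[0,t]}$, by uniqueness applied to restricted problems. Since $Y_{h}^{k}|_{[t_{k},t]}$ is an $\mathcal{F}_{t}$-measurable $C$-valued random variable, $X_{h}(t)$ and $K_{h}(t)$ are $\mathcal{F}_{t}$-measurable. Continuity of the paths then yields progressive measurability. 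For $\mathbf{n}_{h}$, I would take a Radon--Nikodym derivative $dK_{h}/d|K_{h}|$, which can be chosen measurably, for instance as a limit of difference quotients along dyadic partitions. It lies in $N_{X_{h}(s)}$ a.e. by the pathwise Skorokhod property.
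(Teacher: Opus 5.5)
Your proposal is correct and follows the paper's own proof. Both build the solution by forward recursion over $[t_k,t_{k+1}]$, solving the deterministic Skorokhod problem pathwise for the frozen driving path $Y_h^k$, and get uniqueness interval by interval; your non-anticipativity and measurability argument for $\Gamma$ also supplies the adaptedness step that the paper only asserts. One minor correction: for a general convex domain, Tanaka's estimate gives only a square-root (H\"older-type) modulus involving the total variations of the reflection terms, not a Lipschitz bound, but your adaptedness argument needs only continuity (indeed Borel measurability) of $\Gamma$.
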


\begin{proof}
      We construct the approximation recursively over the temporal grid. Set $X_{h}(0) = x_{0}$ and $K_{h}(0) = 0$. Suppose that $X_{h}$ and $K_{h}$ have already been uniquely constructed on $[0,t_{k}]$ for some $k \in \{0,1,\cdots,N-1\}$. Since $X_{h}(t_{k})$ is $\mathcal{F}_{t_{k}}$-measurable, the random variables $b_{h}\big(X_{h}(t_{k})\big)$ and $\sigma_{h}\big(X_{h}(t_{k})\big)$ are $\mathcal{F}_{t_{k}}$-measurable. Hence, the path $Y_{h}^{k}$ defined by \eqref{eq:one-step-driving-path} is continuous and adapted on $[t_{k},t_{k+1}]$, and satisfies $Y_{h}^{k}(t_{k}) =          X_{h}(t_{k}) \in \overline{D}$. By the pathwise solvability of the Skorokhod problem on convex domains $D$ recalled in Subsection~\ref{subsect:convexdomains}, there exists a unique pair $\{\big(X_{h}(t), K_{h}(t)-K_{h}(t_{k})\big)\}_{t \in [t_{k},t_{k+1}]}$ satisfying
      \begin{equation*}
            X_{h}(t)
            =
            Y_{h}^{k}(t)
            +
            K_{h}(t)-K_{h}(t_{k}),
            \quad
            t \in [t_{k},t_{k+1}],
      \end{equation*}
      together with the corresponding normal-reflection and boundary-support conditions. Repeating this construction for $k = 0, 1, \cdots, N-1$ yields a unique continuous adapted pair $(X_{h},K_{h})$ on $[0,T]$. By construction, this pair satisfies
      \eqref{eq:coupled-tamed-euler-peano}--\eqref{eq:numerical-reflection-support}.
\end{proof}

\subsection{Algebraic properties of the tamed coefficients}\label{subsec:tamed-coefficient-properties}
We record several elementary algebraic properties of the coupled tamed coefficients. These properties will be used in the increment and moment estimates below. The first lemma gives the basic boundedness and growth properties of the tamed coefficients.

\begin{lemma}\label{lem:basic-coupled-taming-properties}
      Suppose that Assumption~\ref{ass:coefficients} holds. Then, for every $h \in (0,1]$ and $x \in \R^{d}$,
      \begin{equation}\label{eq:taming-factor-bounds}
            0 < \lambda_{h}(x) \leq 1.
      \end{equation}
      Moreover, it holds that
      \begin{equation}\label{eq:tamed-coefficients-dominated}
            |b_{h}(x)| \leq |b(x)|,
            \quad
            \|\sigma_{h}(x)\|_{\mathrm{HS}}
            \leq
            \|\sigma(x)\|_{\mathrm{HS}},
      \end{equation}
      and
      \begin{equation}\label{eq:tamed-coefficients-step-bounds}
            h^{1/2}|b_{h}(x)| \leq 1,
            \quad
            h^{1/2} \|\sigma_{h}(x)\|_{\mathrm{HS}}^{2} \leq 1.
      \end{equation}
      In addition, there exists a constant $C > 0$, independent of $h$, such that
      \begin{equation}\label{eq:tamed-coefficients-growth}
            |b_{h}(x)|
            \leq
            C\big(1+|x|^{\gamma_{b}}\big),
            \quad
            \|\sigma_{h}(x)\|_{\mathrm{HS}}
            \leq
            C\big(1+|x|^{\gamma_{\sigma}}\big).
      \end{equation}
\end{lemma}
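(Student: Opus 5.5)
The lemma is elementary, and I would verify its four assertions in order directly from the definition of $\lambda_{h}$.

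\emph{Bounds on $\lambda_h$.} Since $G(x)=|b(x)|+\|\sigma(x)\|_{\mathrm{HS}}^{2}\geq 0$ and $h>0$, the denominator $1+h^{1/2}G(x)$ is at least $1$ and finite. This gives $0<\lambda_{h}(x)\leq 1$, i.e.\ \eqref{eq:taming-factor-bounds}.

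\emph{Domination.} Multiplying $b$ by $\lambda_h\leq1$ gives $|b_h(x)|\leq|b(x)|$. Likewise, $\|\sigma_h(x)\|_{\mathrm{HS}}=\lambda_h^{1/2}(x)\|\sigma(x)\|_{\mathrm{HS}}\leq\|\sigma(x)\|_{\mathrm{HS}}$, which is \eqref{eq:tamed-coefficients-dominated}.

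\emph{Step bounds.} Here I would use the coupled structure of $G$. Writing $a=h^{1/2}|b(x)|$ and $s=h^{1/2}\|\sigma(x)\|_{\mathrm{HS}}^{2}$, both nonnegative, we have
\begin{equation*}
h^{1/2}|b_h(x)|=\frac{a}{1+a+s}\leq 1,
\qquad
h^{1/2}\|\sigma_h(x)\|_{\mathrm{HS}}^{2}=\lambda_h(x)\,h^{1/2}\|\sigma(x)\|_{\mathrm{HS}}^{2}=\frac{s}{1+a+s}\leq 1 .
\end{equation*}
This yields \eqref{eq:tamed-coefficients-step-bounds}. The point to note is that $\sigma_h$ carries $\lambda_h^{1/2}$, so its square carries exactly $\lambda_h$. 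Hence the squared diffusion is tamed by a quantity containing $\|\sigma\|_{\mathrm{HS}}^2$ itself, and the bound is uniform.

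\emph{Growth.} Combining \eqref{eq:tamed-coefficients-dominated} with the polynomial growth estimates \eqref{eq:coefficient-growth}, and using $\gamma_b=q_b+1$ and $\gamma_\sigma=q_\sigma+1$, gives \eqref{eq:tamed-coefficients-growth}. The constant $C$ is the one from \eqref{eq:coefficient-growth}, so it is independent of $h$. For completeness I would recall that \eqref{eq:coefficient-growth} follows from \eqref{eq:drift-polynomial-lipschitz} with $y=0$:
\begin{equation*}
|b(x)|\leq|b(0)|+L(1+|x|^{q_b})|x|\leq C(1+|x|^{q_b+1}),
\end{equation*}
using Young's inequality for $|x|\leq 1+|x|^{q_b+1}$. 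The same argument applies to $\sigma$.

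No step here is a genuine obstacle. The only place requiring care is matching the power $\lambda_h^{1/2}$ in $\sigma_h$ with the squared norm, so that the step bound holds for $\|\sigma_h\|_{\mathrm{HS}}^{2}$ rather than for $\|\sigma_h\|_{\mathrm{HS}}$.
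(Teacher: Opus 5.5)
Your proposal is correct and follows the paper's proof essentially step by step. The bounds on $\lambda_h$ and the domination come straight from the definition, your $a/(1+a+s)$ and $s/(1+a+s)$ computation is the paper's use of $|b(x)|\leq G(x)$ and $\|\sigma(x)\|_{\mathrm{HS}}^{2}\leq G(x)$, and the growth bounds follow from domination together with \eqref{eq:coefficient-growth}. Your added derivation of \eqref{eq:coefficient-growth} from the polynomial Lipschitz condition with $y=0$ is harmless; when $q_b=0$ the term $|0|^{q_b}$ should be kept, but this only changes the constant.
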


\begin{proof}
      By the definition of $\lambda_{h}$, we have $0 < \lambda_{h}(x) = \frac{1}{1+h^{1/2}G(x)} \leq 1$, which proves \eqref{eq:taming-factor-bounds}. Hence, $|b_{h}(x)| = \lambda_{h}(x)|b(x)| \leq |b(x)|$, and
      $\|\sigma_{h}(x)\|_{\mathrm{HS}} = \lambda_{h}^{1/2}(x)\|\sigma(x)\|_{\mathrm{HS}} \leq \|\sigma(x)\|_{\mathrm{HS}}$.
      This proves \eqref{eq:tamed-coefficients-dominated}. Since $|b(x)| \leq G(x)$ and $\|\sigma(x)\|_{\mathrm{HS}}^{2} \leq G(x)$, we have
      \begin{gather*}
            h^{1/2}|b_{h}(x)|
            =
            \frac{h^{1/2}|b(x)|}{1 + h^{1/2}G(x)}
            \leq
            \frac{h^{1/2}G(x)}{1 + h^{1/2}G(x)}
            \leq
            1,
            \\
            h^{1/2}\|\sigma_{h}(x)\|_{\mathrm{HS}}^{2}
            =
            h^{1/2}\lambda_{h}(x)\|\sigma(x)\|_{\mathrm{HS}}^{2}
            =
            \frac{h^{1/2}\|\sigma(x)\|_{\mathrm{HS}}^{2}}{1+h^{1/2}G(x)}
            \leq
            \frac{h^{1/2}G(x)}{1+h^{1/2}G(x)}
            \leq
            1.
      \end{gather*}
      Thus, \eqref{eq:tamed-coefficients-step-bounds} holds. Finally, \eqref{eq:tamed-coefficients-growth} follows immediately from \eqref{eq:coefficient-growth}, \eqref{eq:tamed-coefficients-dominated} and the definitions $\gamma_{b} = q_{b} + 1$ and $\gamma_{\sigma} = q_{\sigma} + 1$. 
\end{proof}

The next lemma is the key algebraic consequence of the coupled taming: the high-order coercivity estimate of the original coefficients is inherited by the tamed coefficients.
\begin{lemma}%[Preservation of high-order coercivity]
\label{lem:tamed-high-order-coercivity}
      Suppose that Assumption~\ref{ass:coefficients} holds and let $1 \leq p < p_{\ast}$. Then there exist constants $\varepsilon_{p} > 0$ and $C_{p} > 0$, independent of $h \in (0,1]$, such that
      \begin{align*}
            2\big\langle x-x_{\ast},b_{h}(x) \big\rangle
            +
            (2p-1)\|\sigma_{h}(x)\|_{\mathrm{HS}}^{2}
            \leq
            C_{p}V(x) - \varepsilon_{p}\|\sigma_{h}(x)\|_{\mathrm{HS}}^{2}.
      \end{align*}
\end{lemma}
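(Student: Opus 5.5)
The plan is to combine the identity \eqref{eq:formal-coercivity-preservation} with Lemma~\ref{lem:high-order-coercivity}, and then to use the bound $0<\lambda_{h}\leq 1$ from \eqref{eq:taming-factor-bounds}. The one point requiring care is the sign of the right-hand side. Multiplying an inequality by $\lambda_{h}(x)\in(0,1]$ shrinks the positive term $C_{p}V(x)$, which is harmless. The term $-\varepsilon_{p}\|\sigma(x)\|_{\mathrm{HS}}^{2}$, however, must be converted into a term involving $\|\sigma_{h}(x)\|_{\mathrm{HS}}^{2}$, and this conversion happens exactly, with no loss.

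First, fix $p\in[1,p_{\ast})$ and take $\varepsilon_{p}>0$, $C_{p}>0$ from Lemma~\ref{lem:high-order-coercivity}. These constants depend only on $p$ and the structural constants, so they are independent of $h$. Rewriting \eqref{eq:high-order-coercivity} with $V(x)=1+|x-x_{\ast}|^{2}$ gives, for every $x\in\R^{d}$,
\begin{equation*}
2\langle x-x_{\ast},b(x)\rangle+(2p-1)\|\sigma(x)\|_{\mathrm{HS}}^{2}+\varepsilon_{p}\|\sigma(x)\|_{\mathrm{HS}}^{2}\leq C_{p}V(x).
\end{equation*}

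Second, apply \eqref{eq:formal-coercivity-preservation} with $c=2p-1+\varepsilon_{p}\geq 0$. This expresses
\begin{equation*}
2\langle x-x_{\ast},b_{h}(x)\rangle+(2p-1+\varepsilon_{p})\|\sigma_{h}(x)\|_{\mathrm{HS}}^{2}
\end{equation*}
as $\lambda_{h}(x)$ times the left-hand side of the previous display. Since $\lambda_{h}(x)>0$, this quantity is at most $\lambda_{h}(x)C_{p}V(x)$. Since $\lambda_{h}(x)\leq 1$ and $C_{p}V(x)\geq 0$, it is in turn at most $C_{p}V(x)$. Moving the term $\varepsilon_{p}\|\sigma_{h}(x)\|_{\mathrm{HS}}^{2}$ to the right-hand side yields the claim, with the same constants $\varepsilon_{p}$ and $C_{p}$.

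There is no real obstacle here. The only thing to check is that one uses the form of Lemma~\ref{lem:high-order-coercivity} that keeps the $\varepsilon_{p}$ term on the left before multiplying by $\lambda_{h}$. If one instead multiplied the already-weakened inequality $\cdots\leq C_{p}V-\varepsilon_{p}\|\sigma\|^{2}$, the identity $\lambda_{h}\|\sigma\|_{\mathrm{HS}}^{2}=\|\sigma_{h}\|_{\mathrm{HS}}^{2}$ would still handle it. In that case the step using $\lambda_{h}\leq 1$ is applied only to the nonnegative term $C_{p}V$, so either route works.
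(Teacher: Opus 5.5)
Your proof is correct and matches the paper's argument: the paper also keeps the $\varepsilon_p$ term on the left in Lemma~\ref{lem:high-order-coercivity} and multiplies through by $\lambda_h(x)$, using $b_h=\lambda_h b$ and $\|\sigma_h\|_{\mathrm{HS}}^2=\lambda_h\|\sigma\|_{\mathrm{HS}}^2$. It then bounds $\lambda_h(x)C_pV(x)\le C_pV(x)$ and subtracts $\varepsilon_p\|\sigma_h(x)\|_{\mathrm{HS}}^2$ from both sides, with the same $h$-independent constants.
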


\begin{proof}
      By Lemma~\ref{lem:high-order-coercivity}, we have
      \begin{align*}
            2 \big\langle x-x_{\ast},b(x) \big\rangle
            +
            \big(2p-1+\varepsilon_{p}\big)
            \|\sigma(x)\|_{\mathrm{HS}}^{2}
            \leq
            C_{p}V(x).
      \end{align*}
      From $b_{h}(x) = \lambda_{h}(x)b(x)$,  $\|\sigma_{h}(x)\|_{\mathrm{HS}}^{2} = \lambda_{h}(x) \|\sigma(x)\|_{\mathrm{HS}}^{2}$ and \eqref{eq:taming-factor-bounds}, it follows that
      \begin{align*}
            &~2\big\langle x-x_{\ast}, b_{h}(x) \big\rangle
            +
            \big(2p-1+\varepsilon_{p}\big)
            \|\sigma_{h}(x)\|_{\mathrm{HS}}^{2}
            \\=&~
            \lambda_{h}(x)
            \big(2\big\langle x-x_{\ast}, b(x) \big\rangle
            +
            \big(2p-1+\varepsilon_{p}\big)
            \|\sigma(x)\|_{\mathrm{HS}}^{2}\big)
            \\\leq&~
            C_{p}\lambda_{h}(x)V(x)
            \\\leq&~
            C_{p}V(x).
      \end{align*}
      Subtracting $\varepsilon_{p}\|\sigma_{h}(x)\|_{\mathrm{HS}}^{2}$ from both sides gives the desired result.
\end{proof}

We also need to quantify the difference between the original coefficients and their tamed versions.

\begin{lemma}\label{lem:coupled-taming-defects}
      Suppose that Assumption~\ref{ass:coefficients} holds. Then for every $h \in (0,1]$ and $x \in \R^{d}$,
      \begin{gather}\label{eq:drift-taming-defect-basic}
            |b(x)-b_{h}(x)| \leq h^{1/2}G(x)|b(x)|,
            \\\label{eq:diffusion-taming-defect-basic}
            \|\sigma(x)-\sigma_{h}(x)\|_{\mathrm{HS}}
            \leq
            h^{1/2}G(x)\|\sigma(x)\|_{\mathrm{HS}}.
      \end{gather}
      In particular, there exists a constant $C>0$, independent of $h$, such that
      \begin{gather}\label{eq:drift-taming-defect-growth}
            |b(x)-b_{h}(x)|
            \leq
            Ch^{1/2}\big(1+|x|^{\widehat{\gamma}+\gamma_{b}}\big),
            \\\label{eq:diffusion-taming-defect-growth}
            \|\sigma(x)-\sigma_{h}(x)\|_{\mathrm{HS}}
            \leq
            Ch^{1/2}\big(1+|x|^{\widehat{\gamma}+\gamma_{\sigma}}\big).
      \end{gather}
\end{lemma}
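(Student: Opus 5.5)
The plan is to work directly from the definition of the common taming factor. First I will write the defects as multiples of the original coefficients, $b(x)-b_h(x) = (1-\lambda_h(x))\,b(x)$ and $\sigma(x)-\sigma_h(x) = (1-\lambda_h^{1/2}(x))\,\sigma(x)$. The drift case follows from the identity
\begin{equation*}
      1-\lambda_h(x) = \frac{h^{1/2}G(x)}{1+h^{1/2}G(x)} \leq h^{1/2}G(x),
\end{equation*}
which gives \eqref{eq:drift-taming-defect-basic} immediately. For the diffusion case I will use that $\lambda_h(x)\in(0,1]$ by \eqref{eq:taming-factor-bounds}. Then $\lambda_h^{1/2}(x)\geq\lambda_h(x)$, so $0\leq 1-\lambda_h^{1/2}(x)\leq 1-\lambda_h(x)\leq h^{1/2}G(x)$. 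Taking Hilbert--Schmidt norms then yields \eqref{eq:diffusion-taming-defect-basic}.

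For the growth versions, I will bound $G$ using the polynomial growth \eqref{eq:coefficient-growth}:
\begin{equation*}
      G(x) \leq C\big(1+|x|^{\gamma_b}\big) + C\big(1+|x|^{\gamma_\sigma}\big)^2 \leq C\big(1+|x|^{\gamma_b}+|x|^{2\gamma_\sigma}\big) \leq C\big(1+|x|^{\widehat{\gamma}}\big).
\end{equation*}
The last step uses $|x|^a\leq 1+|x|^{\widehat\gamma}$ for every $0\leq a\leq\widehat\gamma$.

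Multiplying by $|b(x)|\leq C(1+|x|^{\gamma_b})$, and respectively by $\|\sigma(x)\|_{\mathrm{HS}}\leq C(1+|x|^{\gamma_\sigma})$, gives
\begin{equation*}
      (1+|x|^{\widehat\gamma})(1+|x|^{\gamma_b}) \leq 3\big(1+|x|^{\widehat\gamma+\gamma_b}\big),
\end{equation*}
again by absorbing the intermediate powers into the extreme ones. The same computation applies with $\gamma_\sigma$ in place of $\gamma_b$. This proves \eqref{eq:drift-taming-defect-growth} and \eqref{eq:diffusion-taming-defect-growth}, with a constant $C$ that does not depend on $h$.

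The only step needing any care is the square-root comparison $1-\lambda_h^{1/2}\leq 1-\lambda_h$. It holds because $\lambda_h\in(0,1]$. Everything else is routine bookkeeping of polynomial exponents.
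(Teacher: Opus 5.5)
Your proposal is correct and follows the paper's proof essentially step for step: the same factorizations $(1-\lambda_h)b$ and $(1-\lambda_h^{1/2})\sigma$, the same bound $1-\lambda_h\le h^{1/2}G$, and the same polynomial bookkeeping through $G(x)\le C(1+|x|^{\widehat{\gamma}})$. The one cosmetic difference is that you justify $1-\lambda_h^{1/2}\le 1-\lambda_h$ via $\lambda_h^{1/2}\ge\lambda_h$ for $\lambda_h\in(0,1]$, whereas the paper writes $1-\lambda_h^{1/2}=(1-\lambda_h)/(1+\lambda_h^{1/2})$; both are equally valid.
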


\begin{proof}
      Since $1 - \lambda_{h}(x) = \frac{h^{1/2}G(x)}{1 + h^{1/2}G(x)} \leq h^{1/2}G(x)$, we obtain
      \begin{align*}
            |b(x)-b_{h}(x)|
            =
            \big(1-\lambda_{h}(x)\big)|b(x)|
%            =
%            \frac{h^{1/2}G(x)}{1+h^{1/2}G(x)}|b(x)|
            \leq
            h^{1/2}G(x)|b(x)|,
      \end{align*}
      which proves \eqref{eq:drift-taming-defect-basic}. Moreover, the inequality
      \begin{equation*}
            1-\lambda_{h}^{1/2}(x)
            =
            \frac{1-\lambda_{h}(x)}{1+\lambda_{h}^{1/2}(x)}
            \leq
            1-\lambda_{h}(x)
            \leq
            h^{1/2}G(x)
      \end{equation*}
      enables us to get
      \begin{align*}
            \|\sigma(x)-\sigma_{h}(x)\|_{\mathrm{HS}}
            =
            \big(1 - \lambda_{h}^{1/2}(x)\big)
            \|\sigma(x)\|_{\mathrm{HS}}
            \leq
            h^{1/2} G(x) \|\sigma(x)\|_{\mathrm{HS}},
      \end{align*}
      proving \eqref{eq:diffusion-taming-defect-basic}. By \eqref{eq:coefficient-growth}, we have
      \begin{align*}
            G(x)
            =
            |b(x)| + \|\sigma(x)\|_{\mathrm{HS}}^{2}
            \leq
            C\big(1 + |x|^{\gamma_{b}} + |x|^{2\gamma_{\sigma}}\big)
            \leq
            C\big(1 + |x|^{\widehat{\gamma}}\big).
      \end{align*}
      It follows that
      \begin{gather*}
            G(x)|b(x)|
            \leq
            C\big(1 + |x|^{\widehat{\gamma}}\big)\big(1 + |x|^{\gamma_{b}}\big)
            \leq
            C\big(1 + |x|^{\widehat{\gamma}+\gamma_{b}}\big),
            \\
            G(x)\|\sigma(x)\|_{\mathrm{HS}}
            \leq
            C\big(1 + |x|^{\widehat{\gamma}}\big)
            \big(1 + |x|^{\gamma_{\sigma}}\big)
            \leq
            C\big(1 + |x|^{\widehat{\gamma}+\gamma_{\sigma}}\big).
      \end{gather*}
      Combining these estimates with \eqref{eq:drift-taming-defect-basic} and \eqref{eq:diffusion-taming-defect-basic} proves
      \eqref{eq:drift-taming-defect-growth} and \eqref{eq:diffusion-taming-defect-growth}.
\end{proof}

\subsection{One-step increment estimates}\label{subsec:coarse-increment-estimates}
Before proving global moment estimates, we establish local increment bounds for the numerical path. The first estimate is conditional and does not require any a priori moment bound for the numerical solution. Writing $Z_{h,k}(t) := X_{h}(t)-X_{h}(t_{k}), t \in [t_{k},t_{k+1}]$, we get
\begin{align}\label{eq:one-step-state-increment-equation}
      Z_{h,k}(t)
      =
      b_{h}(X_{h}(t_{k}))(t-t_{k})
      +
      \sigma_{h}(X_{h}(t_{k}))\big(W(t)-W(t_{k})\big)
      +
      K_{h}(t)-K_{h}(t_{k}).
\end{align}
In what follows, for $k = 0, 1, \cdots, N-1$, we write
\begin{equation*}
      \E_{k}\big[\,\cdot\,\big]
      :=
      \E\left[\,\cdot\,\middle|\mathcal{F}_{t_{k}}\right].
\end{equation*}

The following conditional estimate controls one-step increments in terms of the frozen tamed coefficients.
\begin{lemma}%[Conditional one-step increment estimate]
\label{lem:conditional-one-step-increment}
       Fix $r \geq 2$ and let $\tau$ be a stopping time satisfying $t_{k} \leq \tau \leq t_{k+1}$. Then there exists a constant $C_{r} > 0$, independent of $h$, $k$ and $\tau$, such that
      \begin{align}\label{eq:conditional-one-step-increment}
            \E_{k}\left[ \sup_{t_{k} \leq t \leq t_{k+1}}
            \left|X_{h}(t\wedge\tau)-X_{h}(t_{k})\right|^{r}\right]
            \leq
            C_{r}\big(h^{r}|b_{h}(X_{h}(t_{k}))|^{r}
            +
            h^{r/2}\|\sigma_{h}(X_{h}(t_{k}))\|_{\mathrm{HS}}^{r}\big).
      \end{align}
      Moreover, there exists a constant $C_{r} > 0$, independent of $h$ and $k$, such that
      \begin{equation}\label{eq:coarse-one-step-increment-unconditional}
            \max_{0\leq k\leq N-1}\E
            \left[\sup_{t_{k}\leq t\leq t_{k+1}}
            |X_{h}(t)-X_{h}(t_{k})|^{r}\right]
            \leq
            C_{r}h^{r/4}.
      \end{equation}
\end{lemma}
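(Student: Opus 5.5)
The plan is to treat the step on $[t_k,t_{k+1}]$ as a Skorokhod problem with frozen, $\mathcal{F}_{t_k}$-measurable coefficients. I will bound the reflected increment by the increment of the driving path $Y_h^k$, and then use the bounds of Lemma~\ref{lem:basic-coupled-taming-properties}.

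\textbf{Step 1: pathwise comparison.} Set $Z=Z_{h,k}$. From \eqref{eq:one-step-state-increment-equation}, $Z(t)=M(t)+\Phi(t)$, where
\begin{equation*}
M(t)=b_h(X_h(t_k))(t-t_k)+\sigma_h(X_h(t_k))(W(t)-W(t_k)),\qquad \Phi(t)=K_h(t)-K_h(t_k).
\end{equation*}
Since $M$ is a continuous semimartingale and $\Phi$ has finite variation, Itô's formula gives
\begin{equation*}
|Z(t)|^2=2\int_{t_k}^t\langle Z(s),dM(s)\rangle+\int_{t_k}^t\|\sigma_h(X_h(t_k))\|_{\mathrm{HS}}^2\,ds+2\int_{t_k}^t\langle Z(s),d\Phi(s)\rangle .
\end{equation*}
The last term is nonpositive by \eqref{eq:reflection-one-path} applied with $y=X_h(t_k)\in\overline D$.

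For a purely pathwise alternative, I could use the convex-domain estimate $\sup_{[t_k,t]}|Z|\le C\sup_{[t_k,t]}|M|$. That bound is not among the stated earlier results, however, so I will use the Itô route instead.

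\textbf{Step 2: moments of order $r$.} Raise to the power $r/2$, stop at $\tau$, and take $\E_k$. The drift part of $\langle Z,dM\rangle$ is handled by $\sup|Z|\cdot h|b_h|$ together with Young's inequality. The stochastic integral $\int\langle Z,\sigma_h\,dW\rangle$ is controlled with the conditional BDG inequality, which gives
\begin{equation*}
C\,\E_k\Big[\Big(\int\|\sigma_h\|^2_{\mathrm{HS}}|Z|^2\,ds\Big)^{r/4}\Big]\le C\,\E_k\big[\sup|Z|^{r/2}\,h^{r/4}\|\sigma_h\|_{\mathrm{HS}}^{r/2}\big].
\end{equation*}
Young's inequality absorbs half of $\E_k[\sup|Z|^r]$ into the left-hand side. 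What remains is $C_r\big(h^r|b_h|^r+h^{r/2}\|\sigma_h\|_{\mathrm{HS}}^r\big)$, which is \eqref{eq:conditional-one-step-increment}.

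The main obstacle is the absorption, which requires $\E_k[\sup|Z(\cdot\wedge\tau)|^r]<\infty$ a priori. I will handle it by further localizing with $\inf\{t:|Z(t)|\ge R\}$, passing to the limit $R\to\infty$ by Fatou's lemma. Finiteness at each fixed $R$ is trivial, and the resulting constant does not depend on $R$. Because the coefficients are frozen and $\mathcal{F}_{t_k}$-measurable, the conditional BDG constant is universal, so $C_r$ is independent of $h$, $k$ and $\tau$.

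\textbf{Step 3: unconditional bound.} Take $\tau=t_{k+1}$ and the full expectation. By \eqref{eq:tamed-coefficients-step-bounds}, $h^{1/2}|b_h|\le1$, so $h^r|b_h|^r\le h^{r/2}$. Likewise $\|\sigma_h\|^2_{\mathrm{HS}}\le h^{-1/2}$, so $h^{r/2}\|\sigma_h\|_{\mathrm{HS}}^r\le h^{r/2}h^{-r/4}=h^{r/4}$. Since $h\le1$, the right-hand side is at most $C_rh^{r/4}$ deterministically. This gives \eqref{eq:coarse-one-step-increment-unconditional} without any moment bound on $X_h$.
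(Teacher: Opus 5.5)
Your proposal is correct and follows essentially the same route as the paper. Both arguments apply It\^{o}'s formula, discard the reflection term via \eqref{eq:reflection-one-path} with $y=X_h(t_k)$, use the conditional BDG inequality, Young's inequality and absorption, and then derive the unconditional estimate from the deterministic bounds \eqref{eq:tamed-coefficients-step-bounds}. The differences are minor: you apply It\^{o} to $|Z|^2$ and raise to the power $r/2$, whereas the paper applies It\^{o} to $|Z|^r$ directly. Your localization at $\{|Z|\ge R\}$ also makes explicit the finiteness needed for the absorption step, which the paper leaves implicit.
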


\begin{proof}
      For $t \in [t_{k},t_{k+1}]$, define $Z^{\tau}_{h,k}(t) := X_{h}(t \wedge \tau) - X_{h}(t_{k})$. Applying It\^{o}'s formula gives
      \begin{align}\label{eq:ito-one-step-increment}
            |Z^{\tau}_{h,k}(t)|^{r}
            =&~
            r \int_{t_{k}}^{t \wedge \tau} |Z_{h,k}(s)|^{r-2}
            \big\langle Z_{h,k}(s),b_{h}(X_{h}(t_{k})) \big\rangle \,ds \notag
            \\&~+
            \frac{r}{2} \int_{t_{k}}^{t \wedge \tau} |Z_{h,k}(s)|^{r-2} 
            \|\sigma_{h}(X_{h}(t_{k}))\|_{\mathrm{HS}}^{2} \,ds \nonumber
            \\&~+
            \frac{r(r-2)}{2}\int_{t_{k}}^{t \wedge \tau}|Z_{h,k}(s)|^{r-4}
            \left|\sigma_{h}(X_{h}(t_{k}))^{\top}Z_{h,k}(s)\right|^{2} \,ds \nonumber
            \\&~+
            r\int_{t_{k}}^{t \wedge \tau} |Z_{h,k}(s)|^{r-2}
            \big\langle Z_{h,k}(s),\sigma_{h}(X_{h}(t_{k}))
            \,dW(s) \big\rangle \nonumber
            \\&~+
            r\int_{t_{k}}^{t \wedge \tau}|Z_{h,k}(s)|^{r-2}
            \big\langle Z_{h,k}(s),dK_{h}(s) \big\rangle.
      \end{align}
      By Lemma~\ref{lem:reflection-monotonicity} and $X_{h}(t_{k}) \in \overline{D}$, we have 
      \begin{align*}
            r\int_{t_{k}}^{t\wedge\tau}|Z_{h,k}(s)|^{r-2}
            \big\langle Z_{h,k}(s),dK_{h}(s) \big\rangle
            \leq
            0.
      \end{align*}
      Together with $\left|\sigma_{h}(X_{h}(t_{k}))^{\top}Z_{h,k}(s)\right|^{2} \leq \|\sigma_{h}(X_{h}(t_{k}))\|_{\mathrm{HS}}^{2} |Z_{h,k}(s)|^{2}$, one gets
      \begin{align*}
            |Z^{\tau}_{h,k}(t)|^{r}
            \leq{}&~
            r\int_{t_{k}}^{t\wedge\tau}|Z_{h,k}(s)|^{r-1}|b_{h}(X_{h}(t_{k}))|\,ds
            +
            \frac{r(r-1)}{2}
            \|\sigma_{h}(X_{h}(t_{k}))\|_{\mathrm{HS}}^{2}
            \int_{t_{k}}^{t\wedge\tau}|Z_{h,k}(s)|^{r-2} \,ds \nonumber
            \\&~
            +
            r\int_{t_{k}}^{t \wedge \tau}|Z_{h,k}(s)|^{r-2}
            \big\langle Z_{h,k}(s),\sigma_{h}(X_{h}(t_{k}))\,dW(s) \big\rangle.
      \end{align*}
      Setting $\mathcal{Z}_{h,k}^{\tau} := \sup\limits_{t_{k} \leq t \leq t_{k+1}}|Z^{\tau}_{h,k}(t)|$ and taking the supremum over $t \in [t_{k}, t_{k+1}]$, we obtain
      \begin{align}\label{eq:one-step-increment-supremum}
            \big(\mathcal{Z}_{h,k}^{\tau}\big)^{r}
            \leq&~
            rh\big(\mathcal{Z}_{h,k}^{\tau}\big)^{r-1}|b_{h}(X_{h}(t_{k}))|
            +
            \frac{r(r-1)}{2}h\big(\mathcal{Z}_{h,k}^{\tau}\big)^{r-2}
            \|\sigma_{h}(X_{h}(t_{k}))\|_{\mathrm{HS}}^{2}  \nonumber
            \\&~+
            r \sup_{t_{k} \leq t \leq t_{k+1}}
            \left|\int_{t_{k}}^{t \wedge \tau}|Z_{h,k}(s)|^{r-2}
            \big\langle Z_{h,k}(s),\sigma_{h}(X_{h}(t_{k}))\,dW(s) \big\rangle\right|.
      \end{align}
      Utilizing Young's inequality shows that for every $\delta > 0$, 
      \begin{gather}\label{eq:one-step-drift-young}
            h\big(\mathcal{Z}_{h,k}^{\tau}\big)^{r-1} |b_{h}(X_{h}(t_{k}))|
            \leq
            \delta \big(\mathcal{Z}_{h,k}^{\tau}\big)^{r}
            +
            C_{r,\delta} h^{r}|b_{h}(X_{h}(t_{k}))|^{r},
            \\\label{eq:one-step-diffusion-fv-young}
            h \big( \mathcal{Z}_{h,k}^{\tau} \big)^{r-2}
            \|\sigma_{h}(X_{h}(t_{k}))\|_{\mathrm{HS}}^{2}
            \leq
            \delta \big( \mathcal{Z}_{h,k}^{\tau} \big)^{r}
            +
            C_{r,\delta} h^{r/2} \|\sigma_{h}(X_{h}(t_{k}))\|_{\mathrm{HS}}^{r}.
      \end{gather}
      Besides, the conditional Burkholder--Davis--Gundy inequality implies
      \begin{align}\label{eq:one-step-martingale-estimate}
            &~\E_{k}\left[ \sup_{t_{k} \leq t \leq t_{k+1}}
            \left|\int_{t_{k}}^{t \wedge \tau} |Z_{h,k}(s)|^{r-2}
            \big\langle Z_{h,k}(s),\sigma_{h}(X_{h}(t_{k}))\,dW(s) 
            \big\rangle \right| \right] \nonumber
            \\\leq&~
            C_{r}\E_{k}\left[ \left(\int_{t_{k}}^{\tau}
            |Z_{h,k}(s)|^{2r-2} \|\sigma_{h}(X_{h}(t_{k}))\|_{\mathrm{HS}}^{2} 
            \,ds \right)^{1/2} \right] \nonumber
            \\\leq&~
            C_{r}\E_{k}\left[ \big(\mathcal{Z}_{h,k}^{\tau}\big)^{r-1}
            h^{1/2} \|\sigma_{h}(X_{h}(t_{k}))\|_{\mathrm{HS}} \right] \nonumber
            \\\leq&~
            \delta \E_{k}\left[ \big(\mathcal{Z}_{h,k}^{\tau}\big)^{r} \right]
            +
            C_{r,\delta} h^{r/2} \|\sigma_{h}(X_{h}(t_{k}))\|_{\mathrm{HS}}^{r},
      \end{align}
       where we have used the fact that $\sigma_{h}(X_{h}(t_{k}))$ is $\mathcal{F}_{t_{k}}$-measurable. Taking conditional expectations in
      \eqref{eq:one-step-increment-supremum}, and then using \eqref{eq:one-step-drift-young}, \eqref{eq:one-step-diffusion-fv-young} and \eqref{eq:one-step-martingale-estimate}, we derive that
      \begin{align*}
            \E_{k} \left[ \big( \mathcal{Z}_{h,k}^{\tau} \big)^{r} \right]
            \leq
            \frac{r(r+3)}{2}\delta \E_{k} 
            \left[ \big(\mathcal{Z}_{h,k}^{\tau}\big)^{r} \right]
            +
            C_{r,\delta}\left(h^{r}|b_{h}(X_{h}(t_{k}))|^{r}
            +
            h^{r/2}\|\sigma_{h}(X_{h}(t_{k}))\|_{\mathrm{HS}}^{r}\right).
      \end{align*}
      Since $r \geq 2$ is fixed and $\delta > 0$ is arbitrary, one can choose $\delta = \delta_{r} > 0$ sufficiently small (e.g., $\delta_{r} = \frac{1}{r(r+3)+1})$ such that $r(r+3)\delta < 1$. In this way, the first term on the right-hand side can be absorbed into the left-hand side, thus yielding \eqref{eq:conditional-one-step-increment}.

      Applying \eqref{eq:conditional-one-step-increment} with
      $\tau=t_{k+1}$ gives
      \begin{align*}
            \E_{k}\left[\sup_{t_{k}\leq t\leq t_{k+1}}
            |X_{h}(t)-X_{h}(t_{k})|^{r} \right]
            \leq
            C_{r}\left(h^{r}|b_{h}(X_{h}(t_{k}))|^{r}
            +
            h^{r/2}\|\sigma_{h}(X_{h}(t_{k}))\|_{\mathrm{HS}}^{r}\right).
      \end{align*}
      Owing to \eqref{eq:tamed-coefficients-step-bounds}, we have $h^{r}|b_{h}(X_{h}(t_{k}))|^{r} \leq h^{r/2}$ and $h^{r/2}\|\sigma_{h}(X_{h}(t_{k}))\|_{\mathrm{HS}}^{r} \leq h^{r/4}$. By $h \in (0,1]$, $h^{r/2} \leq h^{r/4}$ and hence \begin{equation*}
            \E_{k}\left[\sup_{t_{k} \leq t \leq t_{k+1}}
            |X_{h}(t)-X_{h}(t_{k})|^{r}\right]
            \leq
            C_{r}h^{r/4}.
      \end{equation*}
      Taking expectations yields \eqref{eq:coarse-one-step-increment-unconditional}.
\end{proof}

\subsection{Uniform moment estimates}\label{subsec:stopped-grid-moment-estimates}

We now establish uniform moment estimates for the numerical solution. We first derive high-order moment bounds at the temporal grid points by means of a one-step Lyapunov estimate and a discrete stopping argument. These bounds are then used to sharpen the local increment estimate and to extend the moment estimate to the whole time interval. For simplicity, we write
\begin{align*}
      V_{h,k} := V(X_{h}(t_{k})) = 1 + |X_{h}(t_{k})-x_{\ast}|^{2}.
\end{align*}

\begin{lemma}\label{lem:one-step-lyapunov}
      Suppose that Assumptions~\ref{ass:domain} and \ref{ass:coefficients} hold and let $2 \leq P < p_{\ast}$. Then there exist constants $C_{P} > 0$ and $c_{P} > 0$, independent of $h \in (0,1]$ and $k$, such that
      \begin{align*}
            \E_{k}\left[V_{h,k+1}^{P}\right]
            +
            c_{P}\|\sigma_{h}(X_{h}(t_{k}))\|_{\mathrm{HS}}^{2}
            \E_{k}\left[\int_{t_{k}}^{t_{k+1}}
            V^{P-1}\big(X_{h}(s)\big)\,ds\right]
            \leq
            \big(1 + C_{P}h\big)V_{h,k}^{P}.
      \end{align*}
\end{lemma}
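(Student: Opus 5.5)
The plan is to apply It\^{o}'s formula to $V^{P}(X_{h}(t))$ on $[t_{k},t_{k+1}]$. On this interval the coefficients are frozen at $x_{k}:=X_{h}(t_{k})$, so I write $b_{k}:=b_{h}(x_{k})$ and $\sigma_{k}:=\sigma_{h}(x_{k})$, and use $Z_{h,k}(s)=X_{h}(s)-x_{k}$ as in \eqref{eq:one-step-state-increment-equation}. Exactly as in Step~3 of the proof of Theorem~\ref{thm:well-posedness}, three things happen. The reflection term $2P\int V^{P-1}\langle X_{h}(s)-x_{\ast},dK_{h}(s)\rangle$ is nonpositive by \eqref{eq:reflection-one-path} with $y=x_{\ast}$. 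The second-order term is bounded using $|\sigma_{k}^{\top}(X_{h}(s)-x_{\ast})|^{2}\le V(X_{h}(s))\|\sigma_{k}\|_{\mathrm{HS}}^{2}$. The stochastic integral has zero conditional expectation, after a localization by a stopping time in $[t_{k},t_{k+1}]$ and Fatou. This gives
\begin{equation*}
\E_{k}\big[V^{P}_{h,k+1}\big]\le V_{h,k}^{P}+P\,\E_{k}\int_{t_{k}}^{t_{k+1}}V^{P-1}(X_{h}(s))\Big(2\langle X_{h}(s)-x_{\ast},b_{k}\rangle+(2P-1)\|\sigma_{k}\|_{\mathrm{HS}}^{2}\Big)ds .
\end{equation*}

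Next I split $X_{h}(s)-x_{\ast}=(x_{k}-x_{\ast})+Z_{h,k}(s)$. The frozen part is controlled by Lemma~\ref{lem:tamed-high-order-coercivity}, which yields $2\langle x_{k}-x_{\ast},b_{k}\rangle+(2P-1)\|\sigma_{k}\|_{\mathrm{HS}}^{2}\le C_{P}V_{h,k}-\varepsilon_{P}\|\sigma_{k}\|_{\mathrm{HS}}^{2}$. The term $-\varepsilon_{P}\|\sigma_{k}\|_{\mathrm{HS}}^{2}V^{P-1}(X_{h}(s))$ is what will produce the second term on the left-hand side of the lemma, with $c_{P}$ slightly smaller than $P\varepsilon_{P}$ since part of it is spent below. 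Two remainders must then be bounded by $C h V_{h,k}^{P}$ plus an absorbable multiple of $\|\sigma_{k}\|_{\mathrm{HS}}^{2}\E_{k}\int V^{P-1}(X_{h}(s))ds$:
\begin{equation*}
\text{(a)}\ \ C_{P}\,\E_{k}\!\int_{t_{k}}^{t_{k+1}} V^{P-1}(X_{h}(s))\,V_{h,k}\,ds,\qquad \text{(b)}\ \ 2\,\E_{k}\!\int_{t_{k}}^{t_{k+1}} V^{P-1}(X_{h}(s))\,|Z_{h,k}(s)|\,|b_{k}|\,ds .
\end{equation*}

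For (a) I use $V(X_{h}(s))\le 2V_{h,k}+2|Z_{h,k}(s)|^{2}$, Young's inequality and the conditional increment bound \eqref{eq:conditional-one-step-increment} with $r=2(P-1)\vee2$, etc. The key inputs are the taming bounds \eqref{eq:tamed-coefficients-step-bounds}, which give $h^{r}|b_{k}|^{r}\le h^{r/2}\le1$ and $h^{r/2}\|\sigma_{k}\|_{\mathrm{HS}}^{r}\le h^{r/4}\le1$. Hence $\E_{k}[V^{P-1}(X_{h}(s))]\le C V_{h,k}^{P-1}$, and (a) is at most $C h V_{h,k}^{P}$.

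The main obstacle is (b), because $|b_{k}|$ grows super-linearly and is not dominated by a power of $V_{h,k}$ uniformly in $h$. I would first apply Young in the form $V^{P-1}|Z||b_{k}|\le V^{P-1}\big(V_{h,k}^{-1/2}\cdots\big)$, or more simply Hölder, to reduce to the conditional moments $\E_{k}[|Z_{h,k}|^{2}]^{1/2}|b_{k}|\le C\big(h|b_{k}|^{2}+h^{1/2}\|\sigma_{k}\|_{\mathrm{HS}}|b_{k}|\big)$, with the $V^{P-1}$ factor handled by Cauchy--Schwarz and the bound from (a). The two contributions are then treated as follows. The drift contribution satisfies $h|b_{k}|^{2}\le1\le V_{h,k}$ by \eqref{eq:tamed-coefficients-step-bounds}, so integrated over a step it gives $ChV_{h,k}^{P}$. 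For the mixed contribution, $h^{1/2}|b_{k}|\le1$ gives $h^{1/2}\|\sigma_{k}\|_{\mathrm{HS}}|b_{k}|\le\|\sigma_{k}\|_{\mathrm{HS}}\le\delta\|\sigma_{k}\|_{\mathrm{HS}}^{2}+C_{\delta}$. The $\delta$-part is absorbed into the $\varepsilon_{P}$-term, and the constant part contributes $ChV_{h,k}^{P}$. The delicate point is keeping the weight $V^{P-1}(X_{h}(s))$ (rather than $V_{h,k}^{P-1}$) in front of the $\delta\|\sigma_{k}\|_{\mathrm{HS}}^{2}$ piece so that the absorption is legitimate. If Hölder destroys this, I would instead perform Young pointwise before taking expectations, namely $|Z||b_{k}|\le\delta\|\sigma_{k}\|_{\mathrm{HS}}^{2}+C_{\delta}|Z|^{2}|b_{k}|^{2}/\|\sigma_{k}\|_{\mathrm{HS}}^{2}$. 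Failing that, I would split $Z$ into its drift, Brownian and reflection parts: for the Brownian part $\langle\sigma_{k}\Delta W,b_{k}\rangle$, the conditional mean-zero property is available after freezing $V^{P-1}$ at $t_{k}$, with a correction controlled as in (a); the reflection part has the favorable sign.
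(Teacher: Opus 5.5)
Your overall structure matches the paper's. You use It\^o's formula on the step, the sign of the reflection term, Lemma~\ref{lem:tamed-high-order-coercivity} at the frozen point, the bound $(a)\le ChV_{h,k}^{P}$, and a Cauchy--Schwarz/Young treatment of (b) that leaves a term $C\delta\,h\,V_{h,k}^{P-1}\|\sigma_k\|_{\mathrm{HS}}^{2}$. The gap is exactly at the point you flag as delicate, and you do not resolve it.

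The good term produced by coercivity is $D_{h,k}:=\|\sigma_k\|_{\mathrm{HS}}^{2}\E_k\int_{t_k}^{t_{k+1}}V^{P-1}(X_h(s))\,ds$. It carries the weight at the current state, and the lemma's left-hand side demands that weight. So you need the reverse comparison
\[
h\,V_{h,k}^{P-1}\|\sigma_k\|_{\mathrm{HS}}^{2}\le C_P\,D_{h,k}+C_P\,h .
\]
To obtain it:
\begin{itemize}
\item[1.] Integrate $V_{h,k}^{P-1}\le C_P\big(V^{P-1}(X_h(s))+|Z_{h,k}(s)|^{2P-2}\big)$ over the step and multiply by $\|\sigma_k\|_{\mathrm{HS}}^{2}$.
\item[2.] Bound the remainder using \eqref{eq:conditional-one-step-increment} and \eqref{eq:tamed-coefficients-step-bounds}:
\[
h\|\sigma_k\|_{\mathrm{HS}}^{2}\,\E_k\Big[\sup_{s}|Z_{h,k}(s)|^{2P-2}\Big]\le C_P\,h\|\sigma_k\|_{\mathrm{HS}}^{2}\big((h|b_k|)^{2P-2}+(h^{1/2}\|\sigma_k\|_{\mathrm{HS}})^{2P-2}\big)\le C_P\big(h^{P-1/2}+h^{P/2}\big)\le C_P\,h .
\]
This is where $P\ge 2$ enters.
\end{itemize}
Since this $C_P$ does not depend on $\delta$, the resulting $C\,C_P\,\delta\,D_{h,k}$ is absorbed into $P\varepsilon_P D_{h,k}$ for small $\delta$, and $C_P h\le C_P hV_{h,k}^{P}$. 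This is precisely the step the paper uses.

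Your fallback options do not supply this comparison.
\begin{itemize}
\item[1.] The pointwise Young inequality $|Z_{h,k}||b_k|\le\delta\|\sigma_k\|_{\mathrm{HS}}^{2}+C_\delta|Z_{h,k}|^{2}|b_k|^{2}/\|\sigma_k\|_{\mathrm{HS}}^{2}$ fails. Through the drift part of $\E_k|Z_{h,k}|^{2}$ it leaves a remainder of size $h^{2}|b_k|^{4}/\|\sigma_k\|_{\mathrm{HS}}^{2}$. This blows up as $\|\sigma_k\|_{\mathrm{HS}}\to0$, and nothing in Assumption~\ref{ass:coefficients} prevents $\sigma$ from being small or vanishing.
\item[2.] The reflection part of $Z_{h,k}$ has no favorable sign. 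Lemma~\ref{lem:reflection-monotonicity} controls $\langle \xi(s)-y,d\phi(s)\rangle$ for $y\in\overline{D}$, not $\langle K_h(s)-K_h(t_k),b_k\rangle$. The latter is positive, for instance, when the noise pushes the path out of $\overline{D}$ while $b_k$ points inward. Estimating it in absolute value reproduces the same $h^{1/2}\|\sigma_k\|_{\mathrm{HS}}|b_k|$ term, and hence the same need for the comparison above.
\end{itemize}
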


\begin{proof}
      Recalling $Z_{h,k}(s) = X_{h}(s) - X_{h}(t_{k}), s \in [t_{k},t_{k+1}]$ and setting $\widehat{Z}_{h}(s) := X_{h}(s) - x_{\ast}$ and $z_{h,k} := X_{h}(t_{k}) - x_{\ast}$ yield $\widehat{Z}_{h}(s) = z_{h,k} + Z_{h,k}(s)$. Applying It\^{o}'s formula to $V^{P}\big(X_{h}(t)\big)$ on $[t_{k},t]$ with $t \in [t_{k},t_{k+1}]$ gives
      \begin{align}\label{eq:ito-numerical-one-step-lyapunov}
            V^{P}\big(X_{h}(t)\big) \nonumber
            =&~
            V_{h,k}^{P}
            +
            P\int_{t_{k}}^{t}V^{P-1}\big(X_{h}(s)\big)
            \big(2 \big\langle \widehat{Z}_{h}(s),b_{h}(X_{h}(t_{k})) \big\rangle
            +
            \|\sigma_{h}(X_{h}(t_{k}))\|_{\mathrm{HS}}^{2} \big)\,ds \nonumber
            \\&~
            +
            2P(P-1)\int_{t_{k}}^{t}V^{P-2}\big(X_{h}(s)\big)
            \big|\sigma_{h}(X_{h}(t_{k}))^{\top}\widehat{Z}_{h}(s)\big|^{2}\,ds \nonumber
            \\&~
            +
            2P\int_{t_{k}}^{t}V^{P-1}\big(X_{h}(s)\big)
            \big\langle \widehat{Z}_{h}(s),
            \sigma_{h}(X_{h}(t_{k}))\,dW(s) \big\rangle \nonumber
            \\&~
            +
            2P\int_{t_{k}}^{t} V^{P-1}\big(X_{h}(s)\big)
            \big\langle \widehat{Z}_{h}(s),dK_{h}(s) \big\rangle.
      \end{align}
      Owing to \eqref{eq:reflection-one-path} with $y = x_{\ast}$ and $$ \big|\sigma_{h}(X_{h}(t_{k}))^{\top} \widehat{Z}_{h}(s)\big|^{2}
      \leq
      \|\sigma_{h}(X_{h}(t_{k}))\|_{\mathrm{HS}}^{2}|\widehat{Z}_{h}(s)|^{2}
      \leq
      \|\sigma_{h}(X_{h}(t_{k}))\|_{\mathrm{HS}}^{2}V\big(X_{h}(s)\big),$$ we take  conditional expectations with $t = t_{k+1}$ in \eqref{eq:ito-numerical-one-step-lyapunov} to obtain
      \begin{align*}
            \E_{k}\left[V_{h,k+1}^{P}\right] \notag
            \leq
            V_{h,k}^{P}
            +
            P\E_{k}\int_{t_{k}}^{t_{k+1}}V^{P-1}\big(X_{h}(s)\big)
            \big(2\big\langle \widehat{Z}_{h}(s),
            b_{h}(X_{h}(t_{k})) \big\rangle
            +
            (2P-1)\|\sigma_{h}(X_{h}(t_{k}))\|_{\mathrm{HS}}^{2}\big)\,ds.
      \end{align*}
      By $\widehat{Z}_{h}(s) = z_{h,k} + Z_{h,k}(s)$, we have
      $2\big\langle \widehat{Z}_{h}(s),b_{h}(X_{h}(t_{k})) \big\rangle
            =
            2 \big\langle z_{h,k},b_{h}(X_{h}(t_{k})) \big\rangle
            +
            2 \big\langle Z_{h,k}(s),b_{h}(X_{h}(t_{k})) \big\rangle $.
      Besides, Lemma~\ref{lem:tamed-high-order-coercivity} gives
      \begin{align*}
            2\big\langle z_{h,k},b_{h}(X_{h}(t_{k}))\big\rangle
            +
            (2P-1)\|\sigma_{h}(X_{h}(t_{k}))\|_{\mathrm{HS}}^{2}
            \leq
            C_{P}V_{h,k}
            -
            \varepsilon_{P}\|\sigma_{h}(X_{h}(t_{k}))\|_{\mathrm{HS}}^{2}.
      \end{align*}
      It follows that
      \begin{align}\label{eq:one-step-lyapunov-IJD}
            \E_{k}\left[V_{h,k+1}^{P}\right]
            \leq
            V_{h,k}^{P} + C_{P}I_{h,k}
            -
            P\varepsilon_{P}D_{h,k} + 2PJ_{h,k}
      \end{align}
      with
      \begin{gather*}
            I_{h,k}
            :=
            \E_{k}\left[\int_{t_{k}}^{t_{k+1}}
            V_{h,k} V^{P-1}\big(X_{h}(s)\big) \,ds\right],
            \\
            D_{h,k}
            :=
            \|\sigma_{h}(X_{h}(t_{k}))\|_{\mathrm{HS}}^{2}\,
            \E_{k} \left[\int_{t_{k}}^{t_{k+1}}
            V^{P-1}\big(X_{h}(s)\big) \,ds\right],
            \\
            J_{h,k}
            :=
            |b_{h}(X_{h}(t_{k}))|\,\E_{k}
            \left[\int_{t_{k}}^{t_{k+1}}
            V^{P-1}\big(X_{h}(s)\big) |Z_{h,k}(s)|\,ds\right].
      \end{gather*}

      We estimate the three terms in \eqref{eq:one-step-lyapunov-IJD}. Using $V\big(X_{h}(s)\big) \leq 2\big(V_{h,k} + |Z_{h,k}(s)|^{2}\big)$ and Young's inequality yields
      \begin{equation*}
            V_{h,k}V^{P-1}\big(X_{h}(s)\big)
            \leq
            C_{P}\big(V_{h,k}^{P} + |Z_{h,k}(s)|^{2P}\big).
      \end{equation*}
      Together with Lemma~\ref{lem:conditional-one-step-increment}, $h|b_{h}(X_{h}(t_{k}))| \leq h^{1/2}$, 
      $h^{1/2}\|\sigma_{h}(X_{h}(t_{k}))\|_{\mathrm{HS}} \leq h^{1/4}$
      and $V_{h,k}^{P} \geq 1$, we deduce that
      \begin{align}\label{eq:Ihk-final-bound}
            I_{h,k}
            \leq&~
            C_{P}hV_{h,k}^{P}
            +
            C_{P}h\E_{k}
            \left[\sup_{t_{k}\leq s\leq t_{k+1}}
            |Z_{h,k}(s)|^{2P}\right] \notag
            \\\leq&~
            C_{P}h V_{h,k}^{P}
            +
            C_{P}h\Big((h|b_{h}(X_{h}(t_{k}))|)^{2P}
            +
            \big(h^{1/2}\|\sigma_{h}(X_{h}(t_{k}))
            \|_{\mathrm{HS}}\big)^{2P}\Big) \notag
%            \\\leq&~
%            C_{P}V^{P}(X_{h}(t_{k})) + C_{P}h \notag
            \\\leq&~
            C_{P}h V_{h,k}^{P}.
      \end{align}

      Concerning $J_{h,k}$, the inequality $V^{P-1}\big(X_{h}(s)\big) \leq C_{P} \big(V_{h,k}^{P-1} + |Z_{h,k}(s)|^{2P-2}\big)$ implies
      \begin{equation}\label{eq:Jhk-splitting}
            J_{h,k}
            \leq
            C_{P}\big(J_{h,k}^{(1)}+J_{h,k}^{(2)}\big)
      \end{equation}
      with
      \begin{gather*}
            J_{h,k}^{(1)}
            :=
            hV_{h,k}^{P-1}|b_{h}(X_{h}(t_{k}))|
            \E_{k}\left[\sup_{t_{k}\leq s\leq t_{k+1}}|Z_{h,k}(s)|\right],
            \\
            J_{h,k}^{(2)}
            :=
            h|b_{h}(X_{h}(t_{k}))|\E_{k}
            \left[ \sup_{t_{k}\leq s\leq t_{k+1}}|Z_{h,k}(s)|^{2P-1}\right].
      \end{gather*}
      Making use of Lemma~\ref{lem:conditional-one-step-increment}, Young's inequality, $h|b_{h}(X_{h}(t_{k}))|^{2}\leq1$ and $V_{h,k} \geq 1$ shows that for every $\delta > 0$,
      \begin{align}\label{eq:J1-with-Vk-diffusion}
            J_{h,k}^{(1)}
            \leq&~
            Ch^{2}V_{h,k}^{P-1}|b_{h}(X_{h}(t_{k}))|^{2}
            +
            C h^{3/2} V_{h,k}^{P-1} |b_{h}(X_{h}(t_{k}))|
            \|\sigma_{h}(X_{h}(t_{k}))\|_{\mathrm{HS}} \notag
            \\\leq&~
            Ch^{2}V_{h,k}^{P-1}|b_{h}(X_{h}(t_{k}))|^{2}
            +
            C V_{h,k}^{P-1} \big(C_{\delta}h^{2}|b_{h}(X_{h}(t_{k}))|^{2}
            +
            \delta h \|\sigma_{h}(X_{h}(t_{k}))\|_{\mathrm{HS}}^{2}\big) \notag
            \\\leq&~
            C_{\delta}h V_{h,k}^{P}
            +
            C \delta h V_{h,k}^{P-1}
            \|\sigma_{h}(X_{h}(t_{k}))\|_{\mathrm{HS}}^{2}.
      \end{align}
      We now compare the last term in \eqref{eq:J1-with-Vk-diffusion} with $D_{h,k}$. From $V_{h,k}^{P-1} \leq C_{P}\big(V^{P-1}\big(X_{h}(s)\big) +  |Z_{h,k}(s)|^{2P-2}\big)$, Lemma~\ref{lem:conditional-one-step-increment}, $h^{1/2} \|\sigma_{h}(X_{h}(t_{k}))\|_{\mathrm{HS}}^{2} \leq 1$ and $h|b_{h}(X_{h}(t_{k}))| \leq h^{1/2}$, it follows that
      \begin{align}\label{eq:Vk-diffusion-final-comparison}
            &~h V_{h,k}^{P-1} \|\sigma_{h}(X_{h}(t_{k}))\|_{\mathrm{HS}}^{2}
            =
            \|\sigma_{h}(X_{h}(t_{k}))\|_{\mathrm{HS}}^{2}\, 
            \E_{k}\bigg[\int_{t_{k}}^{t_{k+1}} V_{h,k}^{P-1} \,ds\bigg] \notag
            \\\leq&~
            C_{P}D_{h,k}
            +
            C_{P} h \|\sigma_{h}(X_{h}(t_{k}))\|_{\mathrm{HS}}^{2}\,
            \E_{k} \left[\sup_{t_{k} \leq s\leq t_{k+1}} 
            |Z_{h,k}(s)|^{2P-2} \right] \notag
            \\\leq&~
            C_{P}D_{h,k}
            +
            C_{P} h \|\sigma_{h}(X_{h}(t_{k}))\|_{\mathrm{HS}}^{2} 
            \Big((h|b_{h}(X_{h}(t_{k}))|)^{2P-2} \notag
            \\&~+
            \big(h^{1/2}\|\sigma_{h}(X_{h}(t_{k}))
            \|_{\mathrm{HS}}\big)^{2P-2}\Big) \notag
            \\\leq&~
            C_{P}D_{h,k} + C_{P}h.
      \end{align}
      Combining \eqref{eq:J1-with-Vk-diffusion} and \eqref{eq:Vk-diffusion-final-comparison} and using $V_{h,k}^{P} \geq 1$ result in
      \begin{equation}\label{eq:J1-final}
            J_{h,k}^{(1)}
            \leq
            C_{P,\delta} h V_{h,k}^{P}
            +
            C_{P}\delta D_{h,k}.
      \end{equation}
      For $J_{h,k}^{(2)}$, Lemma~\ref{lem:conditional-one-step-increment},
      $h|b_{h}(X_{h}(t_{k}))| \leq h^{1/2}$ and $h^{1/2}\|\sigma_{h}(X_{h}(t_{k}))\|_{\mathrm{HS}} \leq h^{1/4}$ yield
      \begin{align}\label{eq:J2-final}
            J_{h,k}^{(2)}
            \leq
            C_{P}h|b_{h}(X_{h}(t_{k}))|\big((h|b_{h}(X_{h}(t_{k}))|)^{2P-1}
            +
            \big(h^{1/2}\|\sigma_{h}(X_{h}(t_{k}))\|_{\mathrm{HS}}\big)^{2P-1}\big)
            \leq
            C_{P}h.
      \end{align}
      It follows from \eqref{eq:Jhk-splitting}, \eqref{eq:J1-final} and \eqref{eq:J2-final} that
      \begin{equation}\label{eq:Jhk-final-bound}
            J_{h,k}
            \leq
            C_{P,\delta} h V_{h,k}^{P}
            +
            C_{P}\delta D_{h,k}.
      \end{equation}

      Substituting \eqref{eq:Ihk-final-bound} and \eqref{eq:Jhk-final-bound} into \eqref{eq:one-step-lyapunov-IJD}, we obtain
      \begin{align*}
            \E_{k}\left[V_{h,k+1}^{P}\right]
            \leq
            \big(1+C_{P,\delta}h\big)V_{h,k}^{P}
            -
            \big(P\varepsilon_{P}-C_{P}\delta\big)D_{h,k}.
      \end{align*}
      Choosing $\delta > 0$ sufficiently small so that $P\varepsilon_{P} - C_{P}\delta > 0$ proves the desired result.
\end{proof}

To iterate the one-step estimate without assuming global moment bounds in advance, we introduce a discrete stopping index and obtain the following stopped estimate.

\begin{proposition}%[Stopped grid-point moment estimate]
\label{prop:stopped-grid-moment}
      Suppose that the assumptions of Lemma~\ref{lem:one-step-lyapunov} hold. Let $R > V(x_{0})$ and define the discrete stopping index $\nu_{R} := \inf\big\{k \in \{0,1,\cdots,N\}: V_{h,k} \geq R \big\}$ with the convention $\inf\varnothing := N + 1$. Then
      \begin{equation}\label{eq:stopped-grid-P-moment}
            \max_{0\leq k\leq N}\E\left[V_{h,k\wedge\nu_{R}}^{P}\right]
            \leq
            C_{P,T}V^{P}(x_{0}),
      \end{equation}
      where $C_{P,T}>0$ is independent of $h$ and $R$.
\end{proposition}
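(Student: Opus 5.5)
The plan is to iterate the one-step Lyapunov estimate of Lemma~\ref{lem:one-step-lyapunov} along the grid, using the discrete stopping index only to guarantee integrability. The term $c_P\|\sigma_h\|^2_{\mathrm{HS}}\E_k[\cdots]$ is nonnegative and can simply be dropped. This leaves
\begin{equation*}
      \E_{k}\big[V_{h,k+1}^{P}\big] \leq (1+C_{P}h)V_{h,k}^{P}, \quad k=0,1,\cdots,N-1.
\end{equation*}
Note that $\nu_R$ is a stopping index for the discrete filtration $\{\mathcal{F}_{t_k}\}_{k}$, because $\{\nu_R\leq k\}$ is determined by $V_{h,0},\cdots,V_{h,k}$. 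In particular, the event $\{\nu_R>k\}$ is $\mathcal{F}_{t_k}$-measurable.

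The key identity I will use is
\begin{equation*}
      V^{P}_{h,(k+1)\wedge\nu_R} = V^{P}_{h,k\wedge\nu_R}\mathbf{1}_{\{\nu_R\leq k\}} + V^{P}_{h,k+1}\mathbf{1}_{\{\nu_R> k\}}.
\end{equation*}
On $\{\nu_R>k\}$ we have $V_{h,k}<R$. Taking $\E_k$ and using the measurability of $\mathbf{1}_{\{\nu_R>k\}}$ gives
\begin{equation*}
      \E_k\big[V^{P}_{h,(k+1)\wedge\nu_R}\big] \leq V^{P}_{h,k\wedge\nu_R}\mathbf{1}_{\{\nu_R\leq k\}} + (1+C_Ph)V^{P}_{h,k}\mathbf{1}_{\{\nu_R>k\}} \leq (1+C_Ph)V^{P}_{h,k\wedge\nu_R}.
\end{equation*}
Taking full expectations and iterating then yields
\begin{equation*}
      \E\big[V^{P}_{h,k\wedge\nu_R}\big] \leq (1+C_Ph)^kV^P(x_0) \leq e^{C_PT}V^P(x_0).
\end{equation*}
The resulting constant is independent of both $h$ and $R$.

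The main technical point is the integrability needed to take expectations legitimately. Since the conditional inequality is pointwise with nonnegative quantities, conditional expectations of nonnegative variables are always defined, so only finiteness of the expectations matters. Each $\E[V^P_{h,k\wedge\nu_R}]$ is finite by induction on $k$. For $k=0$ it equals $V^P(x_0)$. For the inductive step, the second term in the identity carries the factor $\mathbf{1}_{\{\nu_R>k\}}$, on which $V_{h,k}<R$. The conditional bound then gives at most $(1+C_Ph)R^P$, so the tower property yields finiteness. This is precisely the role of the stopping: it ensures no a priori moment bound on $X_h$ is needed. Note that the convention $\nu_R=N+1$ causes no issue, since $k\wedge\nu_R\leq N$.
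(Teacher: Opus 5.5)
Your proof is correct and follows the paper's argument essentially step for step: you split $V^{P}_{h,(k+1)\wedge\nu_R}$ over $\{\nu_R>k\}\in\mathcal{F}_{t_k}$ and its complement, apply Lemma~\ref{lem:one-step-lyapunov} conditionally after dropping the nonnegative diffusion term, use the tower property, and iterate to $e^{C_PT}V^P(x_0)$. Your separate finiteness induction is harmless but not needed, because for nonnegative variables the recursion $\E[V^P_{h,(k+1)\wedge\nu_R}]\le(1+C_Ph)\E[V^P_{h,k\wedge\nu_R}]$ already holds in $[0,\infty]$ and carries finiteness forward from $V^P(x_0)$; the stopping index is actually needed in the subsequent tail bound of Proposition~\ref{prop:maximal-grid-moment}, not for integrability here.
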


\begin{proof}
      We first observe that $\nu_{R}$ is a stopping time with respect to the discrete filtration $            \big(\mathcal{F}_{t_{k}}\big)_{k=0}^{N}$. Indeed, for every $k\in\{0,1,\cdots,N\}$, we have
      \begin{equation*}
            \big\{\nu_{R} \leq k\big\}
            =
            \bigcup_{j=0}^{k}\big\{V_{h,j}\geq R\big\}.
      \end{equation*}
      Since $X_{h}(t_{j})$ is $\mathcal{F}_{t_{j}}$-measurable and $\mathcal{F}_{t_{j}} \subset \mathcal{F}_{t_{k}}$ for $0 \leq j \leq k$, it follows that $\big\{\nu_{R} \leq k\big\} \in \mathcal{F}_{t_{k}}$.

      For $k\in\{0,1,\cdots,N-1\}$, define $A_{k} := \big\{k < \nu_{R}\big\} \in \mathcal{F}_{t_{k}}$. On the event $A_{k}$, the stopping index has not yet been reached. Since $\nu_{R}$ is integer-valued, $k<\nu_{R}$ implies $\nu_{R} \geq k+1$, and hence
      \begin{equation*}
            k\wedge\nu_{R} = k,
            \quad
            (k+1)\wedge\nu_{R} = k+1
            \quad
            \text{on }A_{k}.
      \end{equation*}
      On the complementary event $A_{k}^{c}=\{k\geq\nu_{R}\}$, the stopping has already occurred, and therefore
      \begin{equation*}
            k\wedge\nu_{R} = (k+1)\wedge\nu_{R} = \nu_{R}
            \quad
            \text{on }A_{k}^{c}.
      \end{equation*}
      It follows that
      \begin{align}
            V_{h,(k+1)\wedge\nu_{R}}^{P}
            ={}&~
            \mathbf{1}_{A_{k}}
            V_{h,k+1}^{P}
            +
            \mathbf{1}_{A_{k}^{c}}
            V_{h,k\wedge\nu_{R}}^{P}.
            \label{eq:stopped-variable-decomposition}
      \end{align}
      Since 
      \begin{equation*}
            V_{h,k\wedge\nu_{R}}^{P}
            =
            \sum_{j=0}^{k-1}
            V_{h,j}^{P}
            \mathbf{1}_{\{\nu_{R}=j\}}
            +
            V_{h,k}^{P}
            \mathbf{1}_{\{\nu_{R}\geq k\}},
      \end{equation*}
      and every term on the right-hand side is $\mathcal{F}_{t_{k}}$-measurable, we see that $V_{h,k\wedge\nu_{R}}^{P}$ is $\mathcal{F}_{t_{k}}$-measurable. Taking conditional expectations in \eqref{eq:stopped-variable-decomposition} and applying
      $A_{k} \in \mathcal{F}_{t_{k}}$ gives
      \begin{align*}
            \E_{k}\left[V_{h,(k+1)\wedge\nu_{R}}^{P}\right]
            =
            \mathbf{1}_{A_{k}}\E_{k}\left[V_{h,k+1}^{P}\right]
            +
            \mathbf{1}_{A_{k}^{c}}V_{h,k\wedge\nu_{R}}^{P}
            \leq
            \mathbf{1}_{A_{k}}\big(1+C_{P}h\big)V_{h,k}^{P}
            +
            \mathbf{1}_{A_{k}^{c}}V_{h,k\wedge\nu_{R}}^{P},
      \end{align*}
      where Lemma \ref{lem:one-step-lyapunov} has been used. Together with $V_{h,k\wedge\nu_{R}}^{P} = V_{h,k}^{P}$ on $A_{k}$, we have
      \begin{align*}
            \E_{k}\left[V_{h,(k+1)\wedge\nu_{R}}^{P}\right]
            \leq&~
            \big(1+C_{P}h\big)
            \big(\mathbf{1}_{A_{k}}V_{h,k}^{P}
            +
            \mathbf{1}_{A_{k}^{c}}V_{h,k\wedge\nu_{R}}^{P}\big)
            \\=&~
            \big(1+C_{P}h\big)
            \big(\mathbf{1}_{A_{k}}V_{h,k\wedge\nu_{R}}^{P}
            +
            \mathbf{1}_{A_{k}^{c}}V_{h,k\wedge\nu_{R}}^{P}\big)
            \\=&~
            \big(1+C_{P}h\big)
            V_{h,k\wedge\nu_{R}}^{P}.
      \end{align*}
      Taking expectations and using the tower property of conditional expectation yields
      \begin{equation}\label{eq:unconditional-stopped-recursion}
            \E\left[V_{h,(k+1)\wedge\nu_{R}}^{P}\right]
            \leq
            \big(1 + C_{P}h\big)
            \E\left[V_{h,k\wedge\nu_{R}}^{P}\right].
      \end{equation}
      Since $R > V(x_{0}) = V_{h,0}$, one has $\nu_{R} \geq 1$, and hence $V_{h,0\wedge\nu_{R}}^{P} = V_{h,0}^{P} = V^{P}(x_{0})$. Iterating \eqref{eq:unconditional-stopped-recursion}, we obtain that for every $k \in \{0,1,\cdots,N\}$,
      \begin{align*}
            \E\left[V_{h,k\wedge\nu_{R}}^{P}\right]
            \leq
            \big(1+C_{P}h\big)^{k}V^{P}(x_{0})
            \leq
            \exp\big(C_{P}kh\big)V^{P}(x_{0})
            \leq
            \exp\big(C_{P}T\big)V^{P}(x_{0}).
      \end{align*}
      The constant $C_{P,T} := \exp\big(C_{P}T\big)$ is independent of both $h$ and $R$, and thus proving \eqref{eq:stopped-grid-P-moment}.
\end{proof}

The stopped estimate yields a maximal moment bound at the grid points through a tail-probability argument.
\begin{proposition}\label{prop:maximal-grid-moment}
      Suppose that Assumptions~\ref{ass:domain} and \ref{ass:coefficients} hold, and let $2 \leq P < p_{\ast}$. Then for every $1 \leq p < P$, there exists a constant $C_{p,P,T}>0$, independent of $h \in (0,1]$, such that
      \begin{equation}\label{eq:maximal-grid-moment}
            \E\left[\max_{0\leq k\leq N}V_{h,k}^{p}\right]
            \leq
            C_{p,P,T}V^{p}(x_{0}).
      \end{equation}
      Consequently,
      \begin{equation}\label{eq:maximal-grid-absolute-moment}
            \E\left[\max_{0\leq k\leq N}|X_{h}(t_{k})|^{2p}\right]
            \leq
            C_{p,P,T}\big(1+|x_{0}|^{2p}\big).
      \end{equation}
\end{proposition}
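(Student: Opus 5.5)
The argument mirrors Step 4 of the proof of Theorem~\ref{thm:well-posedness}: a stopped moment bound is turned into a tail estimate for the running maximum, and the tail is integrated. Set
\begin{equation*}
      M_{h} := \max_{0 \leq k \leq N} V_{h,k},
\end{equation*}
and fix $R > V(x_{0})$. Proposition~\ref{prop:stopped-grid-moment} supplies the bound \eqref{eq:stopped-grid-P-moment}, with a constant independent of both $h$ and $R$. This is the input we need.

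\textbf{Step 1: tail estimate.} On the event $\{M_{h} \geq R\}$ the discrete stopping index $\nu_{R}$ is at most $N$, and by its definition $V_{h,\nu_{R}} \geq R$. Taking $k = N$ in \eqref{eq:stopped-grid-P-moment} gives $N \wedge \nu_{R} = \nu_{R}$ on this event, so
\begin{equation*}
      R^{P}\,\mathbb{P}(M_{h} \geq R)
      \leq
      \E\left[V_{h,N\wedge\nu_{R}}^{P}\mathbf{1}_{\{M_{h}\geq R\}}\right]
      \leq
      C_{P,T}V^{P}(x_{0}).
\end{equation*}
Unlike the continuous case there is no overshoot issue: we only need $V_{h,\nu_{R}} \geq R$, not equality. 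For $R \leq V(x_{0})$ we simply bound the probability by one.

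\textbf{Step 2: integrating the tail.} Write $A := C_{P,T}V^{P}(x_{0})$ and assume $C_{P,T} \geq 1$, so that $A^{1/P} \geq V(x_{0})$. By the layer-cake formula,
\begin{equation*}
      \E[M_{h}^{p}] = p\int_{0}^{\infty}R^{p-1}\mathbb{P}(M_{h}\geq R)\,dR .
\end{equation*}
Split the integral at $A^{1/P}$:
\begin{itemize}
      \item On $[0,A^{1/P}]$, bound the probability by one; this piece contributes at most $A^{p/P}$.
      \item On $[A^{1/P},\infty)$, use Step 1; this piece contributes at most $pA\int_{A^{1/P}}^{\infty}R^{p-P-1}\,dR = \tfrac{p}{P-p}A^{p/P}$.
\end{itemize}
The second integral converges precisely because $p < P$, which is why the strict inequality appears in the hypothesis. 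Altogether $\E[M_{h}^{p}] \leq C_{p,P,T}V^{p}(x_{0})$, which is \eqref{eq:maximal-grid-moment}, and the constant is independent of $h$.

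\textbf{Step 3: absolute moments.} Bound $|x|^{2p} \leq C_{p}(1+|x-x_{\ast}|^{2p}) \leq C_{p}V^{p}(x)$ and $V^{p}(x_{0}) \leq C_{p}(1+|x_{0}|^{2p})$. Inserting these into \eqref{eq:maximal-grid-moment} yields \eqref{eq:maximal-grid-absolute-moment}.

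The only point needing care is the measurability and validity of the stopped bound at the random index; Proposition~\ref{prop:stopped-grid-moment} already handles this. No step is expected to be a real obstacle.
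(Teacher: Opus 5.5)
Your proposal is correct and follows the paper's proof essentially line by line. It uses the same tail bound $\mathbb{P}(M_h\ge R)\le C_{P,T}V^P(x_0)R^{-P}$ from the stopped estimate at $k=N$, the same layer-cake split at $C_{P,T}^{1/P}V(x_0)$ giving the constant $\frac{P}{P-p}C_{P,T}^{p/P}$, and the same comparison of $|x|^{2p}$ with $V^p(x)$; the only cosmetic difference is that you assume $C_{P,T}\ge 1$, whereas the paper reads it off from $C_{P,T}=\exp(C_PT)$, and either is fine since the constant may be enlarged.
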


\begin{proof}
      Define the maximal Lyapunov value over the temporal grid by $M_{h,N} := \max_{0\leq k\leq N}V_{h,k}$. We first derive a tail estimate for $M_{h,N}$. Fix $R > V(x_{0})$ and recall the stopping index
      \begin{equation*}
            \nu_{R}
            :=
            \inf\big\{k \in \{0,1,\cdots,N\}:
            V_{h,k} \geq R \big\}
      \end{equation*}
      with the convention $\inf\varnothing := N+1$. On the event $\big\{M_{h,N} \geq R\big\}$, there exists at least one index $k \in \{0,1,\cdots,N\}$ such that $V_{h,k} \geq R$, which implies $\nu_{R} \leq N$, $V_{h,N\wedge\nu_{R}} = V_{h,\nu_{R}} \geq R$ and thus 
      \begin{equation*}
            R^{P}\mathbf{1}_{\{M_{h,N}\geq R\}}
            \leq
            R^{P}\mathbf{1}_{\{\nu_{R}\leq N\}}
            \leq
            V_{h,N \wedge \nu_{R}}^{P}\mathbf{1}_{\{\nu_{R}\leq N\}}
            \leq
            V_{h,N \wedge \nu_{R}}^{P}.
      \end{equation*}
      Taking expectations and applying Proposition \ref{prop:stopped-grid-moment} with $k=N$ gives
      \begin{align}\label{eq:maximum-tail-large-R}
            \mathbb{P}\big(M_{h,N} \geq R\big)
            \leq
            \frac{\E\left[V_{h,N \wedge \nu_{R}}^{P}\right]}{R^{P}}
            \leq
            \frac{C_{P,T}V^{P}(x_{0})}{R^{P}}.
      \end{align}
      Since the constant in Proposition~\ref{prop:stopped-grid-moment} has been chosen as $C_{P,T} = \exp\big(C_{P}T\big) \geq 1$, one has
      \begin{equation*}
            \frac{C_{P,T}V^{P}(x_{0})}{R^{P}}
            \geq
            1,
            \quad 0<R\leq V(x_{0}),
      \end{equation*} 
      which means that one can use the trivial estimate $\mathbb{P}\big(M_{h,N}\geq R\big) \leq 1$ for $R \in (0,V(x_{0})]$. Hence, we obtain that for every $R>0$,
      \begin{equation}\label{eq:grid-maximum-tail}
            \mathbb{P}\big(M_{h,N}\geq R\big)
            \leq
            \min\left\{1,\frac{C_{P,T}V^{P}(x_{0})}{R^{P}}\right\}.
      \end{equation}

%      We now use the tail-integral representation. 
%      For every nonnegative random variable $Y$ and every $p>0$, one has
%      \begin{equation}\label{eq:tail-integral-representation}
%            \E\left[Y^{p}\right]
%            =
%            p\int_{0}^{\infty}R^{p-1}
%            \mathbb{P}\big(Y \geq R \big)\,dR.
%      \end{equation}
%      Indeed,
%      \begin{equation*}
%            Y^{p}
%            =
%            \int_{0}^{\infty} pR^{p-1}
%            \mathbf{1}_{\{Y\geq R\}} \,dR,
%      \end{equation*}
%      and \eqref{eq:tail-integral-representation} follows from Tonelli's theorem.

      Setting $A := C_{P,T}^{1/P}V(x_{0}) \geq V(x_{0})$ and using the tail-integral representation for nonnegative random variables (see, e.g., \cite[Exercise~2.2.7]{durrett2019probability}), we obtain
      \begin{align*}
            \E\left[M_{h,N}^{p}\right]
            =&~
            p\int_{0}^{\infty}R^{p-1}\mathbb{P}
            \big(M_{h,N} \geq R\big)\,dR
            \\=&~
            p\int_{0}^{A}R^{p-1}\mathbb{P}
            \big(M_{h,N} \geq R\big)\,dR
            +
            p\int_{A}^{\infty}R^{p-1}
            \mathbb{P}\big(M_{h,N} \geq R\big)\,dR
            \\\leq{}&~
            p\int_{0}^{A}R^{p-1}\,dR
            +
            pC_{P,T}V^{P}(x_{0})
            \int_{A}^{\infty}R^{p-P-1}\,dR.
      \end{align*}
      Since $p < P$, the second improper integral is finite and satisfies
      \begin{equation*}
            \int_{A}^{\infty} R^{p-P-1} \,dR
            =
            \frac{A^{p-P}}{P-p}.
      \end{equation*}
      It follows that
      \begin{align*}
            \E\left[M_{h,N}^{p}\right]
            \leq
            A^{p}
            +
            \frac{pA^{p-P}}{P-p}C_{P,T}V^{P}(x_{0})
            \leq
            \left(1+\frac{p}{P-p}\right)
            C_{P,T}^{p/P}V^{p}(x_{0})
            =
            \frac{P}{P-p}C_{P,T}^{p/P}V^{p}(x_{0}),
      \end{align*}
      which yields \eqref{eq:maximal-grid-moment}. Finally, \eqref{eq:maximal-grid-absolute-moment} follows from $|x|^{2p} \leq C_{p}V^{p}(x)$ and $V^{p}(x_{0}) \leq C_{p}(1 + |x_{0}|^{2p})$. 
\end{proof}

With the maximal grid-point moment estimate at hand, the deterministic taming bound for the diffusion coefficient can be replaced by its polynomial growth bound in the local increment analysis. As a result, the preliminary order $h^{1/4}$ in Lemma~\ref{lem:conditional-one-step-increment} is improved to the standard local order $h^{1/2}$, leading to the following sharp one-step increment estimate.

\begin{proposition}%[Sharp one-step increment estimate]
\label{prop:sharp-one-step-increment}
      Suppose that Assumptions~\ref{ass:domain} and \ref{ass:coefficients} hold. Let $r\geq2$ and assume that there exists an exponent $P$ such that
      \begin{equation}\label{eq:sharp-increment-moment-condition}
            \frac{r\gamma}{2} < P < p_{\ast}, \quad P \geq 2.
      \end{equation}
      Then there exists a constant $C_{r,P,T} > 0$, independent of $h \in (0,1]$, such that
      \begin{equation}\label{eq:sharp-one-step-increment}
            \max_{0 \leq k \leq N-1}\E
            \left[\sup_{t_{k} \leq t \leq t_{k+1}}
            |X_{h}(t)-X_{h}(t_{k})|^{r}\right]
            \leq
            C_{r,P,T}h^{r/2}.
      \end{equation}
\end{proposition}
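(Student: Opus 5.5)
The plan is to start from the conditional estimate \eqref{eq:conditional-one-step-increment} of Lemma~\ref{lem:conditional-one-step-increment} with $\tau=t_{k+1}$, take full expectations, and then replace the deterministic taming bounds used there by the polynomial growth bounds \eqref{eq:tamed-coefficients-growth} combined with the maximal grid moment estimate of Proposition~\ref{prop:maximal-grid-moment}. Concretely, for each $k$,
\begin{equation*}
      \E\left[\sup_{t_{k}\leq t\leq t_{k+1}}|X_{h}(t)-X_{h}(t_{k})|^{r}\right]
      \leq
      C_{r}\Big(h^{r}\E\big[|b_{h}(X_{h}(t_{k}))|^{r}\big]
      +
      h^{r/2}\E\big[\|\sigma_{h}(X_{h}(t_{k}))\|_{\mathrm{HS}}^{r}\big]\Big),
\end{equation*}
so it suffices to show that $\E[|b_{h}(X_{h}(t_{k}))|^{r}]$ and $\E[\|\sigma_{h}(X_{h}(t_{k}))\|_{\mathrm{HS}}^{r}]$ are bounded uniformly in $h$ and $k$. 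Since $h\le 1$, the drift contribution $h^{r}$ is then dominated by $h^{r/2}$.

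By \eqref{eq:tamed-coefficients-growth} we have $|b_{h}(x)|^{r}\le C(1+|x|^{r\gamma_{b}})\le C(1+|x|^{r\gamma})$ and likewise $\|\sigma_{h}(x)\|_{\mathrm{HS}}^{r}\le C(1+|x|^{r\gamma})$. Condition \eqref{eq:sharp-increment-moment-condition} gives $P>r\gamma/2$. We pick an intermediate exponent $p$ with $\max\{1,r\gamma/2\}\le p<P$; this is possible because $P\ge 2>1$. Then $|x|^{r\gamma}\le 1+|x|^{2p}$, and Proposition~\ref{prop:maximal-grid-moment} (with this $p$ and $P$) yields
\begin{equation*}
      \max_{0\le k\le N}\E\big[|X_{h}(t_{k})|^{r\gamma}\big]\le C_{p,P,T}\big(1+|x_{0}|^{2p}\big).
\end{equation*}
Substituting this bound gives \eqref{eq:sharp-one-step-increment} with a constant independent of $h$ and $k$. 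If $r\gamma/2<1$, we take $p=1$, which is allowed since $1<2\le P$.

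There is no serious obstacle. The one point to verify is that Lemma~\ref{lem:conditional-one-step-increment} was proved without using the taming bounds \eqref{eq:tamed-coefficients-step-bounds} in its first part, so that the right-hand side is genuinely the frozen coefficients. This is the case, since the taming bounds entered only in deducing \eqref{eq:coarse-one-step-increment-unconditional}. The other check is that the strict inequality $p<P$ required in Proposition~\ref{prop:maximal-grid-moment} is compatible with $p\ge r\gamma/2$, and this is exactly what the strict inequality in \eqref{eq:sharp-increment-moment-condition} provides.
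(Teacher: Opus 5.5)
Your proposal is correct and follows the paper's proof essentially verbatim. You take expectations in the conditional estimate of Lemma~\ref{lem:conditional-one-step-increment}, bound the frozen tamed coefficients by $C(1+|X_h(t_k)|^{r\gamma})$ via \eqref{eq:tamed-coefficients-growth}, invoke Proposition~\ref{prop:maximal-grid-moment}, and use $h^{r}\le h^{r/2}$; the only difference is that the paper applies the proposition directly with exponent $r\gamma/2$. Your fallback case $r\gamma/2<1$ never occurs, since $r\ge 2$ and $\gamma\ge 1$ force $r\gamma/2\ge 1$.
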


\begin{proof}
      Applying Lemma~\ref{lem:conditional-one-step-increment}, \eqref{eq:tamed-coefficients-growth}, $\gamma_{b} \leq \gamma$ and $\gamma_{\sigma} \leq \gamma$ leads to 
      \begin{align*}
            \E\left[\sup_{t_{k}\leq t\leq t_{k+1}}
            |X_{h}(t)-X_{h}(t_{k})|^{r}\right]
            \leq&~
            C_{r}h^{r}\E\left[|b_{h}(X_{h}(t_{k}))|^{r}\right]
            +
            C_{r}h^{r/2}\E
            \left[\|\sigma_{h}(X_{h}(t_{k}))\|_{\mathrm{HS}}^{r}\right].
            \\\leq&~
            C_{r}h^{r}
            \big(1 + \E\left[|X_{h}(t_{k})|^{r\gamma_{b}}\right]\big)
            +
            C_{r}h^{r/2}\big(1 + \E\left[|X_{h}(t_{k})|^{r\gamma_{\sigma}}\right]\big)
            \\\leq&~
            C_{r}(h^{r}+h^{r/2})
            \big(1 + \E\left[|X_{h}(t_{k})|^{r\gamma}\right]\big).
      \end{align*}
      Together with $|x|^{r\gamma} \leq C_{r,\gamma}V^{r\gamma/2}(x)$ for all $x \in \R^{d}$, the condition \eqref{eq:sharp-increment-moment-condition} allows us to apply Proposition~\ref{prop:maximal-grid-moment} with the moment exponent $r\gamma/2$ to obtain
      \begin{equation*}
            \E\left[\sup_{t_{k}\leq t\leq t_{k+1}}
            |X_{h}(t)-X_{h}(t_{k})|^{r}\right]
            \leq
            C_{r,P,T}\big(h^{r} + h^{r/2}\big).
      \end{equation*}
      Since $h \in (0,1]$ and $r \geq 2$, we have $h^{r} \leq h^{r/2}$. Therefore, \eqref{eq:sharp-one-step-increment} follows.
\end{proof}

Finally, we extend the maximal grid-point estimate to the whole time interval. For this purpose, the coarse increment estimate is sufficient and avoids any additional polynomial moment requirement.
\begin{proposition}%[Continuous-time uniform moment estimate]
\label{prop:continuous-time-uniform-moment}
      Suppose that Assumptions~\ref{ass:domain} and \ref{ass:coefficients} hold. Let $p \geq 1$ and assume that there exists an exponent $P$ such that
      \begin{equation}\label{eq:continuous-time-moment-condition}
            \max\big\{p,2\big\} < P < p_{\ast}.
      \end{equation}
      Then for every $T > 0$, there exists a constant $C_{p,P,T} > 0$, independent of $h \in (0,1]$, such that
      \begin{equation}\label{eq:continuous-time-uniform-moment}
            \sup_{0<h\leq1}\E
            \left[\sup_{0\leq t\leq T}
            V^{p}\big(X_{h}(t)\big)\right]
            \leq
            C_{p,P,T}V^{p}(x_{0}).
      \end{equation}
      Consequently,
      \begin{equation}\label{eq:continuous-time-absolute-moment}
            \sup_{0<h\leq1}\E\left[
            \sup_{0\leq t\leq T}|X_{h}(t)|^{2p}\right]
            \leq
            C_{p,P,T}\big(1+|x_{0}|^{2p}\big).
      \end{equation}
\end{proposition}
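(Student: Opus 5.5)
The plan is to reduce the continuous-time supremum to the grid-point maximum, which is controlled by Proposition~\ref{prop:maximal-grid-moment}, plus the maximal one-step increments, which are controlled by Lemma~\ref{lem:conditional-one-step-increment}. For $t\in[t_k,t_{k+1}]$ we have $V(X_h(t))\le 2V_{h,k}+2|X_h(t)-X_h(t_k)|^2$. Hence
\begin{equation*}
      \sup_{0\le t\le T}V^{p}\big(X_h(t)\big)
      \le
      C_p\max_{0\le k\le N}V_{h,k}^{p}
      +
      C_p\max_{0\le k\le N-1}\sup_{t_k\le t\le t_{k+1}}|X_h(t)-X_h(t_k)|^{2p}.
\end{equation*}
Since $p<P<p_\ast$ and $P\ge2$, Proposition~\ref{prop:maximal-grid-moment} bounds the expectation of the first term by $C_{p,P,T}V^{p}(x_0)$. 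It applies directly if $p\ge1$. If $p<1$ one can still use it with exponent $1$, or simply note that the tail argument there works for any $p<P$.

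The second term is where a naive bound fails: the expectation of a maximum over $N=T/h$ steps cannot be taken inside term by term without a loss. I would instead bound the maximum by the sum,
\begin{equation*}
      \E\Big[\max_{k}\sup_{t_k\le t\le t_{k+1}}|X_h(t)-X_h(t_k)|^{2p}\Big]
      \le
      \sum_{k=0}^{N-1}\E\Big[\sup_{t_k\le t\le t_{k+1}}|X_h(t)-X_h(t_k)|^{2p}\Big].
\end{equation*}
Each summand is then estimated with the conditional bound \eqref{eq:conditional-one-step-increment}, taking $r=2p$, or $r=2$ followed by Jensen's inequality if $2p<2$. The taming bounds \eqref{eq:tamed-coefficients-step-bounds} only give $h^{r/4}=h^{p/2}$ per step, so summing over $N$ steps produces $h^{p/2-1}$. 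This is bounded only when $p\ge2$, so the coarse estimate alone does not suffice for small $p$.

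To handle this, I would work with a large exponent and interpolate. Apply the coarse estimate \eqref{eq:coarse-one-step-increment-unconditional} with a large $r\ge 2p$ and $r>4$. The sum is then at most $N C_r h^{r/4}\le C_rT h^{r/4-1}\le C_rT$, so
\begin{equation*}
      \E\Big[\max_k\sup_{t_k\le t\le t_{k+1}}|X_h(t)-X_h(t_k)|^{r}\Big]\le C_rT.
\end{equation*}
Hölder's inequality then gives the $2p$-th moment bound $C$. To obtain the constant in the form $V^{p}(x_0)$, I use $V(x_0)\ge1$, so $C\le CV^{p}(x_0)$. This step uses no polynomial moments at all, which matches the remark preceding the proposition that the coarse increment estimate suffices.

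Combining the two bounds gives \eqref{eq:continuous-time-uniform-moment} uniformly in $h\in(0,1]$. Then \eqref{eq:continuous-time-absolute-moment} follows from $|x|^{2p}\le C_pV^{p}(x)$ and $V^{p}(x_0)\le C_p(1+|x_0|^{2p})$. The main point to verify is the choice of a large $r$, so that the union over the $N$ subintervals costs only a factor $h^{-1}$; the boundedness $h^{1/2}|b_h|\le1$ and $h^{1/2}\|\sigma_h\|_{\mathrm{HS}}^2\le1$ makes this possible for every $r$.
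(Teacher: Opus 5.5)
Your proposal is correct and matches the paper's proof: the same splitting $V(X_h(t))\le 2V_{h,k}+2|X_h(t)-X_h(t_k)|^2$, Proposition~\ref{prop:maximal-grid-moment} for the grid maximum, and the maximal excursion bounded by a sum of coarse one-step estimates \eqref{eq:coarse-one-step-increment-unconditional} followed by H\"older's inequality. The only difference is that the paper fixes $r=2P$ and uses $P>2$ to get $h^{P/2-1}\le 1$, whereas your free choice $r>\max\{4,2p\}$ shows that the increment term does not actually need the strict inequality $P>2$.
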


\begin{proof}
      For $k = 0,1,\cdots,N-1$, we define $\mathcal{I}_{h,k} := \sup_{t_{k}\leq t\leq t_{k+1}}|X_{h}(t)-X_{h}(t_{k})|$. It follows that for every $t \in [t_{k},t_{k+1}]$,
      \begin{align*}
            V\big(X_{h}(t)\big)
%            =
%            1 + |X_{h}(t)-x_{\ast}|^{2}
            \leq
            1 + 2|X_{h}(t_{k})-x_{\ast}|^{2}
            +
            2|X_{h}(t)-X_{h}(t_{k})|^{2}
            \leq
            2V_{h,k} + 2\mathcal{I}_{h,k}^{2},
      \end{align*}
      which in combination with Proposition~\ref{prop:maximal-grid-moment} and \eqref{eq:continuous-time-moment-condition} implies
      \begin{align}\label{eq:path-maximum-grid-increment}
            \E\left[\sup_{0\leq t\leq T}V^{p}\big(X_{h}(t)\big)\right]
            \leq&~
            C_{p}\E\left[\max_{0\leq k\leq N}V_{h,k}^{p}\right]
            +
            C_{p}\E\left[\max_{0\leq k\leq N-1}\mathcal{I}_{h,k}^{2p}\right] \notag
            \\\leq&~
            C_{p,P,T}V^{p}(x_{0})
            +
            C_{p}\E\left[\max_{0\leq k\leq N-1}\mathcal{I}_{h,k}^{2p}\right].
      \end{align}
      It remains to estimate the maximal excursion between grid points. By H\"{o}lder's inequality, one has
      \begin{equation*}
            \E\left[\max_{0\leq k\leq N-1}\mathcal{I}_{h,k}^{2p}\right]
            \leq
            \left(\E\left[\max_{0\leq k\leq N-1}
            \mathcal{I}_{h,k}^{2P}\right]\right)^{p/P}.
      \end{equation*}
      Since the maximum of nonnegative numbers is bounded by their sum, \eqref{eq:coarse-one-step-increment-unconditional} gives
      \begin{align*}
            \E\left[\max_{0\leq k\leq N-1}\mathcal{I}_{h,k}^{2P}\right]
            \leq
            \sum_{k=0}^{N-1}\E\left[\mathcal{I}_{h,k}^{2P}\right]
            \leq
            C_{P}Nh^{P/2}
            =
            C_{P}Th^{P/2-1}.
      \end{align*}
      From $P > 2$ and $h \in (0,1]$, $h^{P/2-1} \leq 1$, it follows that
      \begin{equation}\label{eq:max-increment-uniform}
            \E\left[\max_{0\leq k\leq N-1}
            \mathcal{I}_{h,k}^{2p}\right]
            \leq
            C_{p,P,T}.
      \end{equation}
      Combining \eqref{eq:path-maximum-grid-increment} and \eqref{eq:max-increment-uniform}, we obtain
      \begin{equation*}
            \E\left[\sup_{0\leq t\leq T}
            V^{p}\big(X_{h}(t)\big)\right]
            \leq
            C_{p,P,T}\big(1+V^{p}(x_{0})\big).
      \end{equation*}
      Observing $V(x_{0}) \geq 1$, the constant term may be absorbed into $V^{p}(x_{0})$, proving \eqref{eq:continuous-time-uniform-moment}. Finally, \eqref{eq:continuous-time-absolute-moment} follows from $|x|^{2p} \leq C_{p}V^{p}(x)$ and $V^{p}(x_{0}) \leq C_{p}\big(1 + |x_{0}|^{2p}\big)$. 
\end{proof}

\section{Strong convergence analysis}\label{sec:convergence}
This section proves the strong convergence of the coupled tamed Euler--Peano scheme. To this end, we first estimate the defects caused by freezing the coefficients at the left endpoints. Recall the local state increment $\Delta_{h}X(t) = X_{h}(t) - \overline{X}_{h}(t), t \in [0,T]$. We define the total drift and diffusion defects by
\begin{gather}\label{eq:drift-defect-decomposition}
      R_{b,h}(t)
      :=
      b\big(X_{h}(t)\big)
      -
      b_{h}\big(\overline{X}_{h}(t)\big)
      =:
      R_{b,h}^{\mathrm{fr}}(t)
      +
      R_{b,h}^{\mathrm{tm}}(t),
      \\\label{eq:diffusion-defect-decomposition}
      R_{\sigma,h}(t)
      :=
      \sigma\big(X_{h}(t)\big)
      -
      \sigma_{h}\big(\overline{X}_{h}(t)\big)
      =:
      R_{\sigma,h}^{\mathrm{fr}}(t)
      +
      R_{\sigma,h}^{\mathrm{tm}}(t)
\end{gather}
with
\begin{gather*}
      %\label{eq:drift-freezing-defect}
      R_{b,h}^{\mathrm{fr}}(t)
      :=
      b\big(X_{h}(t)\big)
      -
      b\big(\overline{X}_{h}(t)\big),
      \quad
      %\label{eq:drift-taming-defect}
      R_{b,h}^{\mathrm{tm}}(t)
      :=
      b\big(\overline{X}_{h}(t)\big)
      -
      b_{h}\big(\overline{X}_{h}(t)\big),
      \\
      %\label{eq:diffusion-freezing-defect}
      R_{\sigma,h}^{\mathrm{fr}}(t)
      :=
      \sigma\big(X_{h}(t)\big)
      -
      \sigma\big(\overline{X}_{h}(t)\big),
      \quad
      %\label{eq:diffusion-taming-defect}
      R_{\sigma,h}^{\mathrm{tm}}(t)
      :=
      \sigma\big(\overline{X}_{h}(t)\big)
      -
      \sigma_{h}\big(\overline{X}_{h}(t)\big).
\end{gather*}

\begin{lemma}\label{prop:total-consistency-estimates}
      Suppose that Assumptions~\ref{ass:domain} and \ref{ass:coefficients} hold. Let $p\geq1$, and assume that there
      exists an exponent $P$ satisfying
      \begin{equation}\label{eq:total-consistency-moment-condition}
            p\Lambda\big(q_{b},q_{\sigma}\big) < P < p_{\ast}.
      \end{equation}
      Then there exists a constant $C_{p,P,T}>0$, independent of
      $h\in(0,1]$, such that
      \begin{equation}\label{eq:total-drift-consistency}
            \E\left[\int_{0}^{T} |R_{b,h}(t)|^{2p} \,dt\right]
            +
            \E\left[\int_{0}^{T}
            \|R_{\sigma,h}(t)\|_{\mathrm{HS}}^{2p}\,dt\right]
            \leq
            C_{p,P,T}h^{p}.
      \end{equation}
\end{lemma}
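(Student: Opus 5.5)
We split the total defects as in \eqref{eq:drift-defect-decomposition}--\eqref{eq:diffusion-defect-decomposition} and use $|a+b|^{2p}\le 2^{2p-1}(|a|^{2p}+|b|^{2p})$, so it suffices to bound the four terms $\E\int_0^T|R_{b,h}^{\mathrm{fr}}|^{2p}dt$, $\E\int_0^T|R_{b,h}^{\mathrm{tm}}|^{2p}dt$ and their diffusion analogues by $Ch^{p}$. Throughout, every moment of $X_h$ we need will be obtained from Proposition~\ref{prop:continuous-time-uniform-moment} (or Proposition~\ref{prop:maximal-grid-moment}) with the fixed exponent $P$, and the point of the hypothesis $p\Lambda(q_b,q_\sigma)<P<p_\ast$ is that all polynomial weights that appear have degree at most $2p\Lambda$ in $|X_h|$, i.e. order $p\Lambda<P$ in $V$.

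\textbf{Taming defects.} By \eqref{eq:drift-taming-defect-growth} and \eqref{eq:diffusion-taming-defect-growth}, for each $t$,
$|R_{b,h}^{\mathrm{tm}}(t)|^{2p}+\|R_{\sigma,h}^{\mathrm{tm}}(t)\|_{\mathrm{HS}}^{2p}\le Ch^{p}\big(1+|\overline X_h(t)|^{2p(\widehat\gamma+\gamma)}\big)$, since $\gamma_b,\gamma_\sigma\le\gamma$. As $\widehat\gamma+\gamma=\Lambda$, the moment $\E|X_h(t_k)|^{2p\Lambda}\le C\E V_{h,k}^{p\Lambda}$ is controlled uniformly in $h,k$ by Proposition~\ref{prop:maximal-grid-moment} applied with exponent $p\Lambda<P$ (note $P\ge2$ is automatic since $\Lambda\ge2$, $p\ge1$). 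Integrating over $[0,T]$ gives $Ch^p$.

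\textbf{Freezing defects.} By \eqref{eq:drift-polynomial-lipschitz}, $|R_{b,h}^{\mathrm{fr}}(t)|^{2p}\le C(1+|X_h(t)|^{2pq_b}+|\overline X_h(t)|^{2pq_b})|\Delta_hX(t)|^{2p}$, and similarly for $\sigma$ with $q_\sigma$. Apply Hölder with conjugate exponents $(\alpha,\alpha')$: the weight factor needs $\E\sup_t|X_h|^{2pq\alpha}$, finite by Proposition~\ref{prop:continuous-time-uniform-moment} provided $pq\alpha<P$, while the increment factor needs $\big(\E|\Delta_hX(t)|^{2p\alpha'}\big)^{1/\alpha'}\le Ch^{p}$ by Proposition~\ref{prop:sharp-one-step-increment} with $r=2p\alpha'$, which requires $r\gamma/2=p\alpha'\gamma<P$. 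The main obstacle is choosing $\alpha$ so both constraints hold. Writing $q=\max(q_b,q_\sigma)=\gamma-1$, the constraints are $\tfrac1\alpha> pq/P$ and $\tfrac1{\alpha'}>p\gamma/P$; these are compatible iff $p(q+\gamma)/P<1$, i.e. $p(2\gamma-1)<P$. Since $2\gamma-1<\gamma+\widehat\gamma=\Lambda$ (as $\widehat\gamma\ge\gamma$), this follows from $p\Lambda<P$, so such $\alpha$ exists; one also checks $r\ge2$ and $P\ge2$. (In the degenerate case $q=0$ take $\alpha=\infty$ directly.) Then $\E|R^{\mathrm{fr}}(t)|^{2p}\le Ch^p$ uniformly in $t$, and integrating in $t$ finishes the proof, with all constants depending only on $p,P,T$ and structural data.
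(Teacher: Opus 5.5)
Your proposal is correct and follows essentially the same route as the paper: you split each defect into freezing and taming parts, and you bound the taming parts by Lemma~\ref{lem:coupled-taming-defects} together with uniform moments of order $p\Lambda<P$. You handle the freezing parts by H\"older's inequality, balancing the polynomial weight (via Proposition~\ref{prop:continuous-time-uniform-moment}) against the sharp increment bound of Proposition~\ref{prop:sharp-one-step-increment}. The only cosmetic difference is that the paper fixes the specific conjugate pair $a_b=(q_b+\gamma)/q_b$, $a_b'=(q_b+\gamma)/\gamma$, which is one point of your admissible interval for $1/\alpha$ and requires exactly the same condition $p(q_b+\gamma)<P$ that you derive.
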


\begin{proof}
      We first estimate the freezing defects. By \eqref{eq:drift-polynomial-lipschitz}, we have
      \begin{align*}
            \left|R_{b,h}^{\mathrm{fr}}(t)\right|
            \leq
            L\big(1 + |X_{h}(t)|^{q_{b}} + |\overline{X}_{h}(t)|^{q_{b}}\big)
            |\Delta_{h}X(t)|.
      \end{align*}
      Suppose first that $q_{b}>0$, and define the conjugate exponents $a_{b} := \frac{q_{b}+\gamma}{q_{b}}, a_{b}' := \frac{q_{b}+\gamma}{\gamma}$ satisfying $\frac{1}{a_{b}} + \frac{1}{a_{b}'} = 1$. Applying H\"older's inequality gives
      \begin{align}\label{eq:drift-freezing-holder}
            \E\left[|R_{b,h}^{\mathrm{fr}}(t)|^{2p}\right]
            \leq
            C_{p}\left(\E\big[1 + |X_{h}(t)|^{2p(q_{b}+\gamma)}
            +
            |\overline{X}_{h}(t)|^{2p(q_{b}+\gamma)}\big]\right)^{1/a_{b}}
            \left(\E|\Delta_{h}X(t)|^{2p(q_{b}+\gamma)/\gamma}\right)^{1/a_{b}'}.
      \end{align}
      Since $p(q_{b}+\gamma) < P$, Proposition~\ref{prop:continuous-time-uniform-moment} implies that the first factor on the right-hand side of \eqref{eq:drift-freezing-holder} is uniformly bounded. Noting that
      \begin{equation*}
            \frac{\gamma}{2}\frac{2p(q_{b}+\gamma)}{\gamma}
            =
            p(q_{b}+\gamma)
            <
            P,
      \end{equation*}
      Proposition~\ref{prop:sharp-one-step-increment} yields $\E\big[|\Delta_{h}X(t)|^{2p(q_{b}+\gamma)/\gamma}\big] \leq C_{p,P,T}h^{p(q_{b}+\gamma)/\gamma}$ and consequently
      \begin{align*}
            \left(\E\left[|\Delta_{h}X(t)|^{\frac{2p(q_{b}+\gamma)}{\gamma}}\right]\right)^{1/a_{b}'}
            \leq
            C_{p,P,T}h^{\frac{2p(q_{b}+\gamma)}{\gamma}/(2a_{b}')}
            =
            C_{p,P,T}h^{p}.
      \end{align*}
      Substituting this estimate into \eqref{eq:drift-freezing-holder} gives
      \begin{equation}\label{eq:drift-freezing-pointwise-moment}
            \E\left[|R_{b,h}^{\mathrm{fr}}(t)|^{2p}\right]
            \leq
            C_{p,P,T}h^{p}.
      \end{equation}
      If $q_{b}=0$, \eqref{eq:drift-polynomial-lipschitz} gives $\left|R_{b,h}^{\mathrm{fr}}(t)\right| \leq C|\Delta_{h}X(t)|$. Since $p\gamma < P$, Proposition~\ref{prop:sharp-one-step-increment} with $r=2p$ again gives \eqref{eq:drift-freezing-pointwise-moment}. The diffusion freezing defect is treated in the same way. Indeed, by \eqref{eq:diffusion-polynomial-lipschitz}, one gets
      \begin{align*}
            \left\|R_{\sigma,h}^{\mathrm{fr}}(t)\right\|_{\mathrm{HS}}
            \leq
            L\big(1 + |X_{h}(t)|^{q_{\sigma}}
            +
            |\overline{X}_{h}(t)|^{q_{\sigma}}\big)|\Delta_{h}X(t)|.
      \end{align*}
      Since \eqref{eq:total-consistency-moment-condition} also implies $p(q_{\sigma}+\gamma) < P$, repeating the proceeding arguments gives 
      \begin{equation}\label{eq:diffusion-freezing-pointwise-moment}
            \E\left[\| R_{\sigma,h}^{\mathrm{fr}}(t) \|_{\mathrm{HS}}^{2p} \right] 
            \leq 
            C_{p,P,T}h^{p}. 
      \end{equation}

      We next estimate the taming defects. Owing to \eqref{eq:drift-taming-defect-growth} and \eqref{eq:diffusion-taming-defect-growth}, one gets
      \begin{gather*}
            \left|R_{b,h}^{\mathrm{tm}}(t)\right|
            =
            \left|b\big(\overline{X}_{h}(t)\big)
            - b_{h}\big(\overline{X}_{h}(t)\big)\right|
            \leq
            Ch^{1/2}\left(1 +
            |\overline{X}_{h}(t)|^{\widehat{\gamma}+\gamma_{b}}\right),
            \\
            \left\|R_{\sigma,h}^{\mathrm{tm}}(t)\right\|_{\mathrm{HS}}
            =
            \left\|\sigma\big(\overline{X}_{h}(t)\big)
            - \sigma_{h}\big(\overline{X}_{h}(t)\big)\right\|_{\mathrm{HS}}
            \leq
            Ch^{1/2}\left( 1 +
            |\overline{X}_{h}(t)|^{\widehat{\gamma}+\gamma_{\sigma}}\right).
      \end{gather*}
      Since $\widehat{\gamma}+\gamma_{b} \leq \widehat{\gamma}+\gamma = \Lambda(q_{b},q_{\sigma})$ and $\widehat{\gamma}+\gamma_{\sigma} \leq \widehat{\gamma} + \gamma = \Lambda(q_{b},q_{\sigma})$, 
      \eqref{eq:total-consistency-moment-condition} and Proposition~\ref{prop:continuous-time-uniform-moment} imply that 
      \begin{align}\label{eq:drift-taming-defect-before-moment}
            \E\big[|R_{b,h}^{\mathrm{tm}}(t)|^{2p}\big]
            +
            \E\big[\|R_{\sigma,h}^{\mathrm{tm}}
            (t)\|_{\mathrm{HS}}^{2p}\big]
            \leq
            C_{p,P,T}h^{p}.
      \end{align}

      Finally, by means of the decompositions \eqref{eq:drift-defect-decomposition} and \eqref{eq:diffusion-defect-decomposition}, one has
      \begin{gather*}
            |R_{b,h}(t)|^{2p}
            \leq
            C_{p}\big(\left|R_{b,h}^{\mathrm{fr}}(t)\right|^{2p}
            +
            \left|R_{b,h}^{\mathrm{tm}}(t)\right|^{2p}\big),
            \\
            \|R_{\sigma,h}(t)\|_{\mathrm{HS}}^{2p}
            \leq
            C_{p}\big(\left\|R_{\sigma,h}^{\mathrm{fr}}(t)\right\|_{\mathrm{HS}}^{2p}
            +
            \left\|R_{\sigma,h}^{\mathrm{tm}}(t)\right\|_{\mathrm{HS}}^{2p}\big).
      \end{gather*}
      Together with \eqref{eq:drift-freezing-pointwise-moment}, \eqref{eq:diffusion-freezing-pointwise-moment} and \eqref{eq:drift-taming-defect-before-moment}, 
      we obtain the desired result.
\end{proof}

\subsection{Strong convergence order of the state process}\label{subsec:strong-convergence-state}

We now prove the strong convergence estimate for the state process. A direct application of the Burkholder--Davis--Gundy inequality to the supremum of the error process would introduce an additional diffusion term that is difficult to absorb by the coupled monotonicity condition. To avoid this difficulty, we first derive a stochastic integral inequality at a slightly higher moment order and then apply a standard form of the stochastic Gronwall inequality; see, e.g., \cite{jin2025large, scheutzow2013stochastic}.

\begin{theorem}\label{thm:strong-convergence-state}
      Suppose that Assumptions~\ref{ass:domain} and \ref{ass:coefficients} hold. Let $(X,K)$ be the unique strong solution of
      \eqref{eq:reflected-sde}--\eqref{eq:exact-reflection-support}, and let $(X_{h},K_{h})$ be the coupled tamed Euler--Peano approximation. Let $p \geq 1$ and assume that there exists an exponent $P$ such that $p\Lambda\big(q_{b},q_{\sigma}\big) < P < p_{\ast}$. Then for every $T > 0$, there exists a constant $C_{p,P,T} > 0$, independent of $h \in (0,1]$, such that
      \begin{equation}\label{eq:strong-convergence-state}
            \E\left[ \sup_{0\leq t\leq T}
            |X(t)-X_{h}(t)|^{2p} \right]
            \leq
            C_{p,P,T}h^{p}.
      \end{equation}
\end{theorem}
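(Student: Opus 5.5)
We set $e(t) := X(t)-X_{h}(t)$ and write the error equation
\begin{equation*}
      e(t)
      =
      \int_{0}^{t}\big(b(X(s))-b_{h}(\overline{X}_{h}(s))\big)\,ds
      +
      \int_{0}^{t}\big(\sigma(X(s))-\sigma_{h}(\overline{X}_{h}(s))\big)\,dW(s)
      +
      K(t)-K_{h}(t).
\end{equation*}
We split each coefficient difference as
\begin{equation*}
      b(X)-b_{h}(\overline{X}_{h}) = \big(b(X)-b(X_{h})\big) + R_{b,h},
      \qquad
      \sigma(X)-\sigma_{h}(\overline{X}_{h}) = \big(\sigma(X)-\sigma(X_{h})\big) + R_{\sigma,h},
\end{equation*}
using the defects of Lemma~\ref{prop:total-consistency-estimates}.

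We apply It\^o's formula to $|e(t)|^{2}$. The reflection contribution $2\langle e,dK-dK_{h}\rangle$ is nonpositive by \eqref{eq:reflection-two-paths}, since both $X$ and $X_{h}$ solve Skorokhod problems in $\overline{D}$. This can be seen directly from the normal-cone characterization, without comparing driving paths. For the diffusion term we use $\|A+B\|^{2}\le(1+\epsilon)\|A\|^{2}+C_{\epsilon}\|B\|^{2}$ and choose $\epsilon$ so that $1+\epsilon\le 2\eta$, which is possible since $\eta>1/2$. Combining this with \eqref{eq:coupled-monotonicity} and Young's inequality on $2\langle e,R_{b,h}\rangle\le|e|^{2}+|R_{b,h}|^{2}$, we obtain
\begin{equation*}
      |e(t)|^{2}
      \le
      \int_{0}^{t}C|e(s)|^{2}\,ds
      +
      \int_{0}^{t}C\big(|R_{b,h}(s)|^{2}+\|R_{\sigma,h}(s)\|_{\mathrm{HS}}^{2}\big)\,ds
      +
      M(t),
\end{equation*}
where $M(t)=2\int_{0}^{t}\langle e,(\sigma(X)-\sigma_{h}(\overline{X}_{h}))\,dW\rangle$ is a continuous local martingale. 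The key point is that the monotonicity constant absorbs the super-linear part $\|\sigma(X)-\sigma(X_{h})\|^{2}$ exactly, without any growth bound. Thus $|e|^{2}$ satisfies a linear stochastic integral inequality $Z(t)\le\int_{0}^{t}CZ\,ds+H(t)+M(t)$ with nondecreasing forcing $H$.

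We then invoke the stochastic Gronwall inequality of Scheutzow \cite{scheutzow2013stochastic}. For $0<q<1$ it gives
\begin{equation*}
      \E\Big[\sup_{t\le T}Z(t)^{q}\Big]\le C_{q,T}\,\E\big[H(T)^{q}\big].
\end{equation*}
We cannot take $q=p$ directly, because $p\ge1$. The remedy suggested by the text is to work at a higher order: apply It\^o's formula to $|e|^{2p'}$ for a suitable $p'>p$ chosen so that $P>p'\Lambda$ still holds, which is possible because the inequality $p\Lambda<P$ is strict. The drift part then becomes
\begin{equation*}
      p'|e|^{2p'-2}\Big(2\langle e,\cdot\rangle + \|\cdot\|^{2} + (2p'-2)\frac{|\cdot^{\top}e|^{2}}{|e|^{2}}\Big),
\end{equation*}
whose diffusion coefficient $2p'-1$ is not controlled by $\eta$ alone.

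For this reason I would instead keep $Z=|e|^{2}$ and replace the higher-moment step by the extended stochastic Gronwall lemma, in the form used in \cite{jin2025large}. That form states that for $0<q_{1}<q_{2}$,
\begin{equation*}
      \Big\|\sup_{t}Z\Big\|_{L^{q_{1}}}\le C\,\|H(T)\|_{L^{q_{2}}}.
\end{equation*}
We take $q_{1}=p$ and $q_{2}=p'$ slightly larger. By H\"older's inequality in time and Lemma~\ref{prop:total-consistency-estimates} applied at exponent $p'$, we get $\E[H(T)^{p'}]\le C\int_{0}^{T}\E(|R_{b,h}|^{2p'}+\|R_{\sigma,h}\|^{2p'})\,dt\le Ch^{p'}$. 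This yields $\E\sup|e|^{2p}\le Ch^{p}$.

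The main obstacle is precisely this step. One must confirm that the version of the stochastic Gronwall inequality used applies with arbitrary $q_{1}<q_{2}$, and in particular with $q_{1}\ge1$. That version is available since the inequality is linear in $Z$ with a deterministic constant $C$. Beyond this, the only checks are that the local martingale needs no integrability, and that the strict margin $P>p'\Lambda$ is available for the consistency lemma. Both hold.
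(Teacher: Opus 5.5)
\textbf{There is a genuine gap in your final step.} You rely on an ``extended stochastic Gronwall lemma'' of the form $\|\sup_t Z\|_{L^{q_1}}\le C\|H(T)\|_{L^{q_2}}$ with $q_1=p\ge1$ and $C$ depending only on $q_1,q_2,T$ and the Gronwall constant. This lemma is false. Linearity with a deterministic constant does not rescue it, because the local martingale is otherwise uncontrolled.

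Take $Z(t)=\exp(\lambda W_1(t)-\lambda^2t/2)$. It satisfies $Z(t)=1+\int_0^t\lambda Z\,dW_1$, which is your inequality with $C=0$ and $H\equiv1$, so $\|H(T)\|_{L^{q_2}}=1$ for every $q_2$. For $q_1>1$ you get $\E[Z(T)^{q_1}]=e^{q_1(q_1-1)\lambda^2T/2}\to\infty$ as $\lambda\to\infty$. For $q_1=1$, Brownian scaling shows that $\sup_{t\le T}Z(t)$ increases in law to $e^{\xi}$ with $\xi\sim\mathrm{Exp}(1)$, which has infinite mean. So Scheutzow's restriction to exponents strictly below $1$ is sharp. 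Working with $Z=|e|^2$ can therefore only give $\E[\sup_t|e|^{2\theta}]\le Ch^{\theta}$ for $\theta<1$, never the $2p$-th moment with $p\ge1$.

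\textbf{The higher-order route you discarded does work.} Your claim that the factor $2p'-1$ is not controlled by $\eta$ is wrong. After using $|A^{\top}e|^2\le\|A\|_{\mathrm{HS}}^2|e|^2$ and Young's inequality, It\^o's formula for $|e|^{2q}$ leaves the term
$$2q|e|^{2q-2}\big(\langle e,b(X)-b(X_h)\rangle+\tfrac{(2q-1)(1+\delta_q)}{2}\|\sigma(X)-\sigma(X_h)\|_{\mathrm{HS}}^2\big).$$
By \eqref{eq:coupled-monotonicity} this is at most $2qL|e|^{2q}$ as soon as $(2q-1)(1+\delta_q)\le2\eta$. Such a $\delta_q>0$ exists for every $q<\eta+\frac12=p_\ast$, which is exactly why $p_\ast$ is defined this way. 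Choosing $p<q<P/\Lambda(q_b,q_\sigma)$ gives $q<P<p_\ast$, since $\Lambda\ge1$.

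This is the paper's proof. It derives
$$|e_h(t)|^{2q}\le C_q\int_0^t|e_h|^{2q}\,ds+\mathcal{R}_{q,h}(t)+M_{q,h}(t),$$
where $\mathcal{R}_{q,h}$ is built from $|R_{b,h}|^{2q}$ and $\|R_{\sigma,h}\|_{\mathrm{HS}}^{2q}$. It then applies Scheutzow's inequality to $Z=|e_h|^{2q}$ with exponent $p/q\in(0,1)$. Finally, the bound $\E[\mathcal{R}_{q,h}(T)^{p/q}]\le(\E[\mathcal{R}_{q,h}(T)])^{p/q}$ together with Lemma~\ref{prop:total-consistency-estimates} at exponent $q$ yields $h^{p}$. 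The rest of your argument (sign of the reflection term, splitting off the defects, Young's inequalities) carries over unchanged.
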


\begin{proof}
      Let $e_{h}(t) := X(t)-X_{h}(t), t \in [0,T]$ and define the principal coefficient differences
      \begin{equation*}
            \Delta b_{h}(t)
            :=
            b\big(X(t)\big) - b\big(X_{h}(t)\big),
            \quad
            \Delta\sigma_{h}(t)
            :=
            \sigma\big(X(t)\big) - \sigma\big(X_{h}(t)\big).
      \end{equation*}
      It follows from \eqref{eq:drift-defect-decomposition} and \eqref{eq:diffusion-defect-decomposition} that
      \begin{equation*}
            b\big(X(t)\big)
            -
            b_{h}\big(\overline{X}_{h}(t)\big)
            =
            \Delta b_{h}(t)
            +
            R_{b,h}(t),
            \quad
            \sigma\big(X(t)\big)
            -
            \sigma_{h}\big(\overline{X}_{h}(t)\big)
            =
            \Delta\sigma_{h}(t)
            +
            R_{\sigma,h}(t),
      \end{equation*}
      which shows that the error process satisfies
      \begin{align*}
            e_{h}(t)
            =
            \int_{0}^{t} \big(\Delta b_{h}(s) + R_{b,h}(s)\big) \,ds
            +
            \int_{0}^{t} \big(\Delta\sigma_{h}(s) + R_{\sigma,h}(s)\big)\,dW(s)
            +
            K(t)-K_{h}(t).
      \end{align*}
      Noting that there exists an exponent $P$ satisfying $p\Lambda(q_{b},q_{\sigma}) < P < p_{\ast}$, we may choose an exponent $q$ such that
      \begin{equation}\label{eq:auxiliary-error-index}
            p < q 
            <
            \frac{P}{\Lambda(q_{b},q_{\sigma})}
            <
            p_{\ast}
      \end{equation}
      due to $\Lambda(q_{b},q_{\sigma}) \geq 1$. Since $q < p_{\ast} = \eta+\frac12$, one has $2q-1 < 2\eta$. We may therefore choose a constant $\delta_{q}>0$ sufficiently small
      such that
      \begin{equation}\label{eq:delta-q-choice}
            (2q-1)(1+\delta_{q}) < 2\eta.
      \end{equation}
      We next apply It\^{o}'s formula to $|e_{h}(t)|^{2q}$. A standard localization argument is understood in the calculations below. We obtain
      \begin{align}\label{eq:ito-state-error}
            |e_{h}(t)|^{2q}
            =&~
            2q\int_{0}^{t}|e_{h}(s)|^{2q-2}\big\langle
            e_{h}(s),\Delta b_{h}(s)
            +
            R_{b,h}(s)\big\rangle\,ds \nonumber
            \\&~
            +
            q\int_{0}^{t}|e_{h}(s)|^{2q-2}
            \big\|\Delta\sigma_{h}(s)
            +
            R_{\sigma,h}(s)\big\|_{\mathrm{HS}}^{2}\,ds \nonumber
            \\&~
            +
            2q(q-1)\int_{0}^{t}|e_{h}(s)|^{2q-4}
            \big|\big(\Delta\sigma_{h}(s)
            +
            R_{\sigma,h}(s)\big)^{\top}e_{h}(s)\big|^{2}\,ds \nonumber
            \\&~
            +
            2q\int_{0}^{t}|e_{h}(s)|^{2q-2}
            \big\langle e_{h}(s),dK(s)-dK_{h}(s) \big\rangle 
            +
            M_{q,h}(t),
      \end{align}
      where
      \begin{align*}
            M_{q,h}(t)
            :=
            2q \int_{0}^{t} |e_{h}(s)|^{2q-2}
            \big\langle e_{h}(s),
            \big(\Delta\sigma_{h}(s) + R_{\sigma,h}(s)\big)
            \,dW(s)\big\rangle
      \end{align*}
      is a continuous local martingale.

      We first examine the reflection term. By \eqref{eq:exact-reflection-decomposition} and
      \eqref{eq:numerical-reflection-decomposition}, we have
      \begin{align*}
            \big\langle e_{h}(s),dK(s)-dK_{h}(s) \big\rangle
            =
            \big\langle X(s)-X_{h}(s),\mathbf{n}_{X}(s) \big\rangle \,d|K|(s)
            -
            \big\langle X(s)-X_{h}(s),\mathbf{n}_{h}(s) \big\rangle \,d|K_{h}|(s).
      \end{align*}
      Owing to $X_{h}(s) \in \overline{D}$ and $X(s) \in \overline{D}$, the normal-cone characterization \eqref{eq:normal-cone-characterization} gives
      \begin{gather*}
            \big\langle X(s)-X_{h}(s),\mathbf{n}_{X}(s) \big\rangle
            \leq
            0
            \quad
            d|K|\text{-a.e.},
            \\
            \big\langle X(s)-X_{h}(s),\mathbf{n}_{h}(s) \big\rangle
            \geq
            0
            \quad
            d|K_{h}|\text{-a.e.},
      \end{gather*}
      and hence
      \begin{equation}\label{eq:weighted-reflection-error-negative}
            |e_{h}(s)|^{2q-2}
            \big\langle
            e_{h}(s),
            dK(s)-dK_{h}(s)
            \big\rangle
            \leq
            0.
      \end{equation}
      For the quadratic diffusion terms, we use $|A^{\top}x|^{2} \leq \|A\|_{\mathrm{HS}}^{2}|x|^{2}$ to obtain
      \begin{align}\label{eq:quadratic-diffusion-error-bound}
            &~q|e_{h}(s)|^{2q-2}\big\|\Delta\sigma_{h}(s) 
            + R_{\sigma,h}(s)\big\|_{\mathrm{HS}}^{2} \notag
            \\&~+
            2q(q-1)|e_{h}(s)|^{2q-4}
            \big|\big(\Delta\sigma_{h}(s)
            +
            R_{\sigma,h}(s)\big)^{\top}e_{h}(s)\big|^{2} \notag
            \\\leq&~
            q(2q-1)|e_{h}(s)|^{2q-2}
            \big\|\Delta\sigma_{h}(s)
            +
            R_{\sigma,h}(s)\big\|_{\mathrm{HS}}^{2} \notag
            \\\leq&~
            q(2q-1)|e_{h}(s)|^{2q-2}
            \bigg((1+\delta_{q})\|\Delta\sigma_{h}(s)\|_{\mathrm{HS}}^{2}
            +
            \left( 1+\frac{1}{\delta_{q}} \right)
            \|R_{\sigma,h}(s)\|_{\mathrm{HS}}^{2}\bigg),         
      \end{align}
      where Young's inequality has been used. As \eqref{eq:delta-q-choice} ensures $\frac{(2q-1)(1+\delta_{q})}{2} < \eta$, the coupled monotonicity condition \eqref{eq:coupled-monotonicity} yields
      \begin{align}\label{eq:principal-coupled-error-bound}
            &~2q|e_{h}(s)|^{2q-2}
            \big\langle e_{h}(s), \Delta b_{h}(s)\big\rangle
            +
            q(2q-1)(1+\delta_{q})|e_{h}(s)|^{2q-2}
            \|\Delta\sigma_{h}(s)\|_{\mathrm{HS}}^{2} \notag
            \\=&~
            2q |e_{h}(s)|^{2q-2}\Big(\big\langle e_{h}(s),
            \Delta b_{h}(s) \big\rangle
            +
            \frac{(2q-1)(1+\delta_{q})}{2}
            \|\Delta\sigma_{h}(s)\|_{\mathrm{HS}}^{2}\Big) \notag
            \\\leq&~
            2qL|e_{h}|^{2q}.
      \end{align}
      Besides, utilizing Young's inequality again results in
      \begin{align}\label{eq:drift-defect-error-bound}
            2q|e_{h}(s)|^{2q-2}\big\langle
            e_{h}(s),R_{b,h}(s)\big\rangle
            \leq&~
%            2q|e_{h}(s)|^{2q-1}|R_{b,h}(s)|
%            \leq
            C_{q}|e_{h}(s)|^{2q} + C_{q}|R_{b,h}(s)|^{2q},
            \\\label{eq:diffusion-defect-error-bound}
            |e_{h}(s)|^{2q-2}\|R_{\sigma,h}(s)\|_{\mathrm{HS}}^{2}
            \leq&~
            C_{q}|e_{h}(s)|^{2q}
            +
            C_{q}\|R_{\sigma,h}(s)\|_{\mathrm{HS}}^{2q}.
      \end{align}
      Substituting
      \eqref{eq:weighted-reflection-error-negative},
      \eqref{eq:quadratic-diffusion-error-bound},
      \eqref{eq:principal-coupled-error-bound},
      \eqref{eq:drift-defect-error-bound} and
      \eqref{eq:diffusion-defect-error-bound} into
      \eqref{eq:ito-state-error}, we obtain
      \begin{align}\label{eq:error-stochastic-gronwall-form}
            |e_{h}(t)|^{2q}
            \leq
            C_{q}\int_{0}^{t}|e_{h}(s)|^{2q}\,ds
            +
            C_{q}\int_{0}^{t}\big(|R_{b,h}(s)|^{2q}
            +
            \|R_{\sigma,h}(s)\|_{\mathrm{HS}}^{2q}\big)\,ds
            +
            M_{q,h}(t).
      \end{align}

      Define
      \begin{equation*}
            \mathcal{R}_{q,h}(t)
            :=
            C_{q}\int_{0}^{t}\big(|R_{b,h}(s)|^{2q}
            +
            \|R_{\sigma,h}(s)\|_{\mathrm{HS}}^{2q}\big)\,ds.
      \end{equation*}
      Then $\mathcal{R}_{q,h}$ is nonnegative, continuous, adapted, and nondecreasing. Moreover, \eqref{eq:error-stochastic-gronwall-form} takes the form
      \begin{equation*}
            |e_{h}(t)|^{2q}
            \leq
            \mathcal{R}_{q,h}(t)
            +
            C_{q}\int_{0}^{t}|e_{h}(s)|^{2q}\,ds
            +
            M_{q,h}(t).
      \end{equation*}
      Let $\theta := \frac{p}{q} \in (0,1)$ due to \eqref{eq:auxiliary-error-index}. Applying the stochastic Gronwall inequality (see, e.g., \cite[Theorem 4]{scheutzow2013stochastic}) yields
      \begin{align*}
            \E\left[\sup_{0\leq t\leq T}
            |e_{h}(t)|^{2p}\right]
            =
            \E\left[\sup_{0\leq t\leq T}
            \big(|e_{h}(t)|^{2q}\big)^{p/q}\right]
            \leq
            C_{p,q,T}\E\left[
            \mathcal{R}_{q,h}^{p/q}(T)\right]
            \leq
            C_{p,q,T}\left(\E\left[
            \mathcal{R}_{q,h}(T)\right]\right)^{p/q},
      \end{align*}
      where H\"{o}lder's inequality and $q/p > 1$ have been used in the last inequality. By $q\Lambda(q_{b},q_{\sigma}) < P$ in \eqref{eq:auxiliary-error-index}, Lemma~\ref{prop:total-consistency-estimates} can be applied with $q$ in place of $p$. Therefore,
      \begin{align*}
            \E\left[\sup_{0\leq t\leq T}
            |e_{h}(t)|^{2p}\right]
            \leq
            C_{p,q,T}\left(C_{q}
            \E\int_{0}^{T}\Big[|R_{b,h}(s)|^{2q}
            +
            \|R_{\sigma,h}(s)\|_{\mathrm{HS}}^{2q}\Big]\,ds\right)^{p/q}
            \leq
%            C_{p,q,P,T}\big(h^{q}\big)^{p/q}
%            =
            C_{p,q,P,T}h^{p}.
      \end{align*}
      Since the auxiliary exponent $q$ depends only on $p$, $P$ and the growth indices, it may be absorbed into the constant. Thus,
      we obtain \eqref{eq:strong-convergence-state}. 
\end{proof}

\subsection{Strong convergence order of the boundary regulator}\label{subsec:strong-convergence-reflection}
The convergence estimate for the state process also yields a convergence rate for the boundary regulator. Since the coefficient differences are only polynomially locally Lipschitz, the estimate of the reflection error requires a slightly stronger moment condition. We therefore introduce the additional growth index $\overline{q} := \max\big\{ q_{b},q_{\sigma} \big\}$ and define
\begin{equation}\label{eq:reflection-moment-consumption-index}
      \Lambda_{K}\big(q_{b},q_{\sigma}\big)
      :=
      \Lambda\big(q_{b},q_{\sigma}\big)
      +
      \overline{q}.
\end{equation}
Under this strengthened moment condition, we obtain the following strong convergence estimate for the boundary regulator.

\begin{corollary}\label{cor:strong-convergence-reflection}
      Suppose that Assumptions~\ref{ass:domain} and \ref{ass:coefficients} hold. Let $(X,K)$ be the unique strong
      solution of \eqref{eq:reflected-sde}--\eqref{eq:exact-reflection-support}, and let $(X_{h},K_{h})$ be the coupled tamed Euler--Peano approximation. Let $p \geq 1$ and assume that there exists an exponent $P$ such that
      \begin{equation}\label{eq:reflection-strong-moment-condition}
            p\Lambda_{K}\big(q_{b},q_{\sigma}\big)
            <
            P
            <
            p_{\ast}.
      \end{equation}
      Then for every $T>0$, there exists a constant $C_{p,P,T}>0$, independent of $h\in(0,1]$, such that
      \begin{equation}\label{eq:strong-convergence-reflection}
            \E\left[\sup_{0\leq t\leq T}
            |K(t)-K_{h}(t)|^{2p}\right]
            \leq
            C_{p,P,T}h^{p}.
      \end{equation}
\end{corollary}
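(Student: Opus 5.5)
Subtracting the two reflected equations expresses the regulator error explicitly in terms of the state error and the coefficient defects:
\begin{equation*}
      K(t)-K_{h}(t)
      =
      e_{h}(t)
      -
      \int_{0}^{t}\big(\Delta b_{h}(s)+R_{b,h}(s)\big)\,ds
      -
      \int_{0}^{t}\big(\Delta\sigma_{h}(s)+R_{\sigma,h}(s)\big)\,dW(s),
\end{equation*}
with $e_{h}$, $\Delta b_{h}$, $\Delta\sigma_{h}$ as in the proof of Theorem~\ref{thm:strong-convergence-state}. Taking suprema and $2p$-th moments, I would bound $\E[\sup_{t\le T}|K(t)-K_{h}(t)|^{2p}]$ by a constant times the sum of four terms. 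These are the state error $\E[\sup|e_{h}|^{2p}]$, the drift term $\E[(\int_{0}^{T}|\Delta b_{h}|+|R_{b,h}|\,ds)^{2p}]$, and the stochastic integral. By the Burkholder--Davis--Gundy inequality and H\"older's inequality in time, the stochastic integral is controlled by $C_{p,T}\E\int_{0}^{T}(\|\Delta\sigma_{h}\|_{\mathrm{HS}}^{2p}+\|R_{\sigma,h}\|_{\mathrm{HS}}^{2p})\,ds$. By H\"older, the drift term is likewise at most $C_{p,T}\E\int_{0}^{T}(|\Delta b_{h}|^{2p}+|R_{b,h}|^{2p})\,ds$.

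Since $\Lambda_{K}\ge\Lambda$, condition \eqref{eq:reflection-strong-moment-condition} implies $p\Lambda<P<p_{\ast}$. Hence Theorem~\ref{thm:strong-convergence-state} gives $\E[\sup|e_{h}|^{2p}]\le Ch^{p}$, and Lemma~\ref{prop:total-consistency-estimates} gives the $R_{b,h}$ and $R_{\sigma,h}$ contributions of order $h^{p}$. What remains are the principal differences $\Delta b_{h}$ and $\Delta\sigma_{h}$, which are not controlled by the monotonicity condition because here we need absolute bounds, not one-sided ones. By \eqref{eq:drift-polynomial-lipschitz} and \eqref{eq:diffusion-polynomial-lipschitz},
\begin{equation*}
      |\Delta b_{h}(s)|+\|\Delta\sigma_{h}(s)\|_{\mathrm{HS}}
      \le
      C\big(1+|X(s)|^{\overline{q}}+|X_{h}(s)|^{\overline{q}}\big)|e_{h}(s)|.
\end{equation*}

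This is the main step, and it is where the extra $\overline{q}$ in $\Lambda_{K}$ is consumed. I would apply H\"older with conjugate exponents $a=q'/(q'-p)$ and $a'=q'/p$, where $q'$ is an auxiliary index with $p<q'$ and $q'\Lambda<P$. Such a $q'$ exists because $p\Lambda<P$. This gives
\begin{equation*}
      \E\big[(1+|X|^{\overline{q}}+|X_{h}|^{\overline{q}})^{2p}|e_{h}|^{2p}\big]
      \le
      \big(\E[1+|X|^{2pa\overline{q}}+|X_{h}|^{2pa\overline{q}}]\big)^{1/a}
      \big(\E|e_{h}|^{2q'}\big)^{p/q'}.
\end{equation*}
Theorem~\ref{thm:strong-convergence-state} applied with $q'$ gives the second factor $\le Ch^{p}$. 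The first factor is bounded by Theorem~\ref{thm:well-posedness} and Proposition~\ref{prop:continuous-time-uniform-moment}, provided $pa\overline{q}<P$, i.e.\ $pq'\overline{q}/(q'-p)<P$.

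The obstacle is choosing $q'$ to meet both requirements at once. The $q'$-dependent bounds are $q'<P/\Lambda$ and $q'>pP/(P-p\overline{q})$, and they can conflict. If a simultaneous choice fails, I would instead interpolate using the full-path bound from Theorem~\ref{thm:strong-convergence-state} at a slightly larger index, exploiting the strict gap in $p(\Lambda+\overline{q})<P$. If necessary I would accept a constant depending on the admissible slack, choosing $q'$ as close to $P/\Lambda$ as the strict inequality allows. Once this is settled, collecting the four bounds yields \eqref{eq:strong-convergence-reflection}.
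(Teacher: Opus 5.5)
Your route is the same as the paper's. You use the same identity for $K-K_h$, bound the drift integrals by H\"older in time and the stochastic integrals by BDG, and handle the $e_h$, $R_{b,h}$, $R_{\sigma,h}$ pieces by Theorem~\ref{thm:strong-convergence-state} and Lemma~\ref{prop:total-consistency-estimates}. For $\Delta b_h,\Delta\sigma_h$ you use H\"older with $a=q'/(q'-p)$, $a'=q'/p$ and the state estimate at a larger index. The gap is in your last paragraph: you leave open the step you call the obstacle. Moreover, your claim that the two constraints on $q'$ ``can conflict'' is false under the hypothesis. Since $P>p\Lambda_K\ge p\overline{q}$, all quantities below are positive and
\[
\frac{pP}{P-p\overline{q}}<\frac{P}{\Lambda}
\iff
p\Lambda<P-p\overline{q}
\iff
p\big(\Lambda+\overline{q}\big)<P,
\]
which is exactly \eqref{eq:reflection-strong-moment-condition}. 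This equivalence is the whole reason $\Lambda_K=\Lambda+\overline{q}$ appears in the statement.

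The paper simply picks the auxiliary index $\widetilde{p}$ in the open interval $\big(pP/(P-p\overline{q}),\,P/\Lambda\big)$. This interval is nonempty by the display above, and $\widetilde{p}>p$ holds automatically because $pP/(P-p\overline{q})\ge p$. Your third fallback is in fact the correct resolution. The map $q'\mapsto pq'\overline{q}/(q'-p)$ is decreasing on $(p,\infty)$, and at $q'=P/\Lambda$ it equals $pP\overline{q}/(P-p\Lambda)$, which is $<P$ precisely by the hypothesis. Hence any $q'$ slightly below $P/\Lambda$ works, with no interpolation or slack-dependent argument needed. With this check inserted, your proof is complete and coincides with the paper's.
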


\begin{proof}
      Recall the state error $e_{h}(t) = X(t)-X_{h}(t)$, and the principal coefficient differences
      \begin{equation*}
            \Delta b_{h}(t)
            =
            b\big(X(t)\big) - b\big(X_{h}(t)\big),
            \quad
            \Delta\sigma_{h}(t)
            =
            \sigma\big(X(t)\big) - \sigma\big(X_{h}(t)\big).
      \end{equation*}
      Subtracting \eqref{eq:coupled-tamed-euler-peano} from \eqref{eq:reflected-sde}, we obtain
      \begin{align}\label{eq:reflection-error-identity}
            K(t)-K_{h}(t)
            =
            e_{h}(t) - \int_{0}^{t}\big(\Delta b_{h}(s)
            + R_{b,h}(s)\big)\,ds
            -
            \int_{0}^{t}\big(\Delta\sigma_{h}(s)
            + R_{\sigma,h}(s)\big)\,dW(s).
      \end{align}
      The strict condition \eqref{eq:reflection-strong-moment-condition} implies $P-p\overline{q} > p\Lambda(q_{b},q_{\sigma}) > 0$. We may therefore choose an exponent $\widetilde{p}$ such that
      \begin{equation}\label{eq:auxiliary-reflection-error-index}
            \frac{pP}{P-p\overline{q}}
            <
            \widetilde{p}
            <
            \frac{P}{\Lambda(q_{b},q_{\sigma})}.
      \end{equation}
      Since $\frac{pP}{P-p\overline{q}} \geq p$, we have $\widetilde{p} > p$. Moreover, \eqref{eq:auxiliary-reflection-error-index} gives $\widetilde{p}\Lambda(q_{b},q_{\sigma}) < P$ and   
      \begin{equation}\label{eq:polynomial-error-holder-condition}
            \frac{p\overline{q}\widetilde{p}}{\widetilde{p}-p} < P.
      \end{equation}
      Applying Theorem~\ref{thm:strong-convergence-state} with $\widetilde{p}$ in place of $p$, we obtain
      \begin{equation}\label{eq:higher-state-error-for-reflection}
            \E\left[\sup_{0\leq t\leq T}
            |e_{h}(t)|^{2\widetilde{p}}\right]
            \leq
            C_{\widetilde{p},P,T}h^{\widetilde{p}}.
      \end{equation}
      We first estimate the principal drift difference. By \eqref{eq:drift-polynomial-lipschitz}, one has
      \begin{align}\label{eq:principal-drift-polynomial-bound}
            |\Delta b_{h}(t)|
            \leq
            L\big(1 + |X(t)|^{q_{b}} + |X_{h}(t)|^{q_{b}}\big)|e_{h}(t)|.
      \end{align}
      Define the conjugate exponents $a := \frac{\widetilde{p}}{\widetilde{p}-p} > 1, a' := \frac{\widetilde{p}}{p} > 1$ satisfying $\frac{1}{a} + \frac{1}{a'} = 1$. Raising \eqref{eq:principal-drift-polynomial-bound} to the power $2p$ and
      applying H\"older's inequality yields
      \begin{align}\label{eq:principal-drift-holder}
            \E\big[|\Delta b_{h}(t)|^{2p}\big]
            \leq
            C_{p}\left(\E\big[1 + |X(t)|^{2p q_{b}a}
            +
            |X_{h}(t)|^{2p q_{b}a}\big]\right)^{1/a}
            \left(\E\big[|e_{h}(t)|^{2pa'}\big]\right)^{1/a'}.
      \end{align}
      By \eqref{eq:polynomial-error-holder-condition} and $q_{b}\leq\overline{q}$, we get $pq_{b}a = \frac{pq_{b}\widetilde{p}}{\widetilde{p}-p} < P$. Theorem~\ref{thm:well-posedness} and Proposition~\ref{prop:continuous-time-uniform-moment} therefore imply that the first factor on the right-hand side of \eqref{eq:principal-drift-holder} is uniformly bounded. Since $2pa' = 2\widetilde{p}$, estimate \eqref{eq:higher-state-error-for-reflection} gives
      \begin{align*}
            \left(\E\big[|e_{h}(t)|^{2pa'}\big]\right)^{1/a'}
            \leq
            \left(\E\left[\sup_{0\leq s\leq T}
            |e_{h}(s)|^{2\widetilde{p}}\right]\right)^{p/\widetilde{p}}
            \leq
            C_{p,P,T}h^{p}.
      \end{align*}
      It follows that
      \begin{equation}\label{eq:principal-drift-error-rate}
            \sup_{0\leq t\leq T}\E
            \big[|\Delta b_{h}(t)|^{2p}\big]
            \leq
            C_{p,P,T}h^{p}.
      \end{equation}
      The same argument, using \eqref{eq:diffusion-polynomial-lipschitz}, gives
      \begin{equation}\label{eq:principal-diffusion-error-rate}
            \sup_{0\leq t\leq T}\E
            \big[\|\Delta\sigma_{h}(t)\|_{\mathrm{HS}}^{2p}\big]
            \leq
            C_{p,P,T}h^{p}.
      \end{equation}
      Indeed, the required state-moment exponent satisfies $\frac{pq_{\sigma}\widetilde{p}}{\widetilde{p}-p} \leq  \frac{p\overline{q}\widetilde{p}}{\widetilde{p}-p} < P$. Taking the supremum over $t \in [0,T]$ in
      \eqref{eq:reflection-error-identity}, and using the elementary inequality $|x_{1}+\cdots+x_{5}|^{2p} \leq C_{p} \sum_{i=1}^{5} |x_{i}|^{2p}$ for $x_{i} \in \mathbb{R}, i = 1,2,\cdots, 5$, we obtain
      \begin{align}\label{eq:reflection-error-five-terms}
            \E\left[\sup_{0\leq t\leq T}|K(t)-K_{h}(t)|^{2p}\right] \notag
            \leq&~
            C_{p}\E\left[\sup_{0\leq t\leq T}|e_{h}(t)|^{2p}\right]
            +
            C_{p}\E\left[\left(\int_{0}^{T}
            |\Delta b_{h}(s)|\,ds\right)^{2p}\right] \nonumber
            \\&~+
            C_{p}\E\left[\left(\int_{0}^{T}
            |R_{b,h}(s)|\,ds\right)^{2p}\right] \nonumber
            +
            C_{p}\E\left[\sup_{0\leq t\leq T}
            \left|\int_{0}^{t}\Delta\sigma_{h}(s)\,dW(s)
            \right|^{2p}\right] \nonumber
            \\&~+
            C_{p}\E\left[\sup_{0\leq t\leq T}
            \left|\int_{0}^{t}R_{\sigma,h}(s)\,dW(s)\right|^{2p}\right].
      \end{align}
      Theorem~\ref{thm:strong-convergence-state} gives
      \begin{equation}\label{eq:reflection-error-state-term}
            \E\left[\sup_{0\leq t\leq T}
            |e_{h}(t)|^{2p}\right]
            \leq
            C_{p,P,T}h^{p}.
      \end{equation}
      By H\"older's inequality and \eqref{eq:principal-drift-error-rate}, we have
      \begin{align}\label{eq:reflection-error-principal-drift-integral}
            \E\left[\left(\int_{0}^{T}|\Delta b_{h}(s)|\,ds\right)^{2p}\right]
            \leq
            T^{2p-1}\int_{0}^{T}\E\big[|\Delta b_{h}(s)|^{2p}\big]\,ds
            \leq
            C_{p,P,T}h^{p}.
      \end{align}
      Similarly, Lemma~\ref{prop:total-consistency-estimates} gives
      \begin{equation}\label{eq:reflection-error-drift-defect-integral}
            \E\left[\left(\int_{0}^{T}|R_{b,h}(s)|\,ds\right)^{2p}\right]
            \leq
            C_{p,P,T}h^{p}.
      \end{equation}
      By the Burkholder--Davis--Gundy inequality, \eqref{eq:principal-diffusion-error-rate}, and H\"older's inequality, one gets
      \begin{align}\label{eq:reflection-error-principal-diffusion}
            \E\left[\sup_{0\leq t\leq T}\left|
            \int_{0}^{t}\Delta\sigma_{h}(s)\,dW(s)\right|^{2p}\right]
            \leq&~
            C_{p}\E\left[\left(\int_{0}^{T}
            \|\Delta\sigma_{h}(s)\|_{\mathrm{HS}}^{2}\,ds\right)^{p}\right] \notag
            \\\leq{}&~
            C_{p,T}\int_{0}^{T}\E\big[
            \|\Delta\sigma_{h}(s)\|_{\mathrm{HS}}^{2p}\big]\,ds  \nonumber
            \\\leq&~
            C_{p,P,T}h^{p}.
      \end{align}
      The same argument, together with Lemma~\ref{prop:total-consistency-estimates}, yields
      \begin{equation}\label{eq:reflection-error-diffusion-defect}
            \E\left[\sup_{0\leq t\leq T}
            \left|\int_{0}^{t}R_{\sigma,h}(s)\,dW(s)\right|^{2p}\right]
            \leq
            C_{p,P,T}h^{p}.
      \end{equation}

      Substituting
      \eqref{eq:reflection-error-state-term},
      \eqref{eq:reflection-error-principal-drift-integral},
      \eqref{eq:reflection-error-drift-defect-integral},
      \eqref{eq:reflection-error-principal-diffusion} and
      \eqref{eq:reflection-error-diffusion-defect} into
      \eqref{eq:reflection-error-five-terms} proves
      \eqref{eq:strong-convergence-reflection}. 
\end{proof}

\section{Numerical experiments}\label{set:experiments}

In this section, we present numerical experiments to illustrate the previous theoretical results. The test equation is a two-dimensional reflected stochastic Ginzburg--Landau-type system on the positive orthant. This example is motivated by reflected stochastic heat equations and SPDEs with hard-wall constraints \cite{nualart1992white, donati1993white}, by hard-wall
Ginzburg--Landau interface models \cite{funaki2001fluctuations, deuschel2007dynamic}, and by reflected Langevin-type dynamics arising in constrained problems \cite{sato2025convergence}. The polynomial state-dependent noise is also in the spirit of stochastic Allen--Cahn and Ginzburg--Landau equations with multiplicative noise \cite{huang2023stability}. We emphasize that the following finite-dimensional system is used as a representative test problem for the coupled monotonicity regime considered in this paper.

Let $D := (0,\infty)^{2}, \overline{D} = [0,\infty)^{2}$ and consider the following RSDE
\begin{equation}\label{eq:numerical-rsde}
      dX(t)
      =
      b\big(X(t)\big)\,dt
      +
      \sigma\big(X(t)\big)\,dW(t)
      +
      dK(t),
      \quad
      t\in[0,T],
\end{equation}
where $X(t) \in \overline{D}$, $W(t)=(W_{1}(t),W_{2}(t))^{\top}$ is a two-dimensional Brownian motion, and $K(t) = (K_{1}(t), K_{2}(t))^{\top}$ is the boundary regulator which keeps the solution in $\overline{D}$. Besides, the coefficients are given by
\begin{equation}\label{eq:numerical-coefficients}
      b(x)
      :=
      \kappa L_{2}x
      +
      \alpha x
      -
      \beta x^{\langle3\rangle},
      \quad
      \sigma(x)
      :=
      \operatorname{diag}
      \big(
      \sigma_{0}+\rho x_{1}^{2},
      \sigma_{0}+\rho x_{2}^{2}
      \big),
\end{equation}
where $\kappa, \alpha, \beta, \sigma_{0} > 0$, $\rho \in \mathbb{R}$ and  
\begin{equation*}
      L_{2}
      :=
      \begin{pmatrix}
            -1 & 1\\
            1 & -1
      \end{pmatrix},
      \quad
      x^{\langle3\rangle}
      :=
      (x_{1}^{3},x_{2}^{3})^{\top},
      \quad
      x=(x_{1},x_{2})^{\top}\in\mathbb{R}^{2}.
\end{equation*}
Here, $\kappa L_{2}X$ represents a nearest-neighbour interaction, while the cubic term $-\beta X^{\langle3\rangle}$ comes from a quartic Ginzburg--Landau-type potential and provides a superlinear dissipative drift. The diffusion coefficient is non-globally Lipschitz due to the quadratic term $\rho x_{i}^{2}$, and the positive constant $\sigma_{0}$ prevents the diffusion from degenerating at the boundary. This example therefore tests the main features covered by our theory: an unbounded convex domain, superlinearly growing coefficients, state-dependent multiplicative noise, and convergence of both the state process and the boundary regulator.

The assumptions of the analysis are satisfied for this example. Indeed, $D = (0,\infty)^{2}$ is a nonempty open convex domain with nonempty boundary. Since $b$ and $\sigma$ are polynomial functions, they are continuous and satisfy the polynomial local Lipschitz estimates
\begin{equation*}
      |b(x)-b(y)|
      \leq
      C\big(1+|x|^{2}+|y|^{2}\big)|x-y|,
      \quad
      \|\sigma(x)-\sigma(y)\|_{\mathrm{HS}}
      \leq
      C\big(1+|x|+|y|\big)|x-y|.
\end{equation*}
Thus $q_b=2$ and $q_\sigma=1$. Moreover, using the negative semidefiniteness of $L_2$ and the inequality $(a+b)^2 \leq \frac{4}{3}(a^2+ab+b^2)$, one obtains
\begin{equation*}
      \big\langle x-y,b(x)-b(y)\big\rangle
      +
      \eta\|\sigma(x)-\sigma(y)\|_{\mathrm{HS}}^{2}
      \leq
      \alpha |x-y|^{2}
\end{equation*}
provided that $\beta\geq \frac{4}{3}\eta\rho^{2}$. In the numerical experiments below, we choose
\begin{equation}\label{eq:2d-GL-parameters}
      \kappa = 0.5,
      \quad
      \alpha = 1,
      \quad
      \rho = 0.5,
      \quad
      \sigma_{0} = 0.5,
      \quad
      \beta = 5.
\end{equation}
Taking $\eta=12$ gives $\frac{4}{3}\eta\rho^{2}=4<5=\beta$ and $p_\ast=\eta+\frac{1}{2}=12.5$. Since $\Lambda(q_b,q_\sigma) = 7$ and
$\Lambda_K(q_b,q_\sigma) = 9$, the choice $P = 10$ satisfies
\begin{equation*}
      7=\Lambda(q_b,q_\sigma)
      <
      9=\Lambda_K(q_b,q_\sigma)
      <
      P=10
      <
      p_\ast=12.5.
\end{equation*}
Therefore the assumptions required for the strong mean-square convergence of both $X_h$ and $K_h$ are fulfilled.

We now apply the coupled tamed Euler--Peano method \eqref{eq:coupled-tamed-euler-peano}--\eqref{eq:numerical-reflection-support}
to \eqref{eq:numerical-rsde} with $T = 1$ and $X_{0} = (0.1,0.2)^{\top}$. Let $t_k=kh$ and set $X_{h,k} := X_h(t_k)$. On each interval $[t_k,t_{k+1}]$, the tamed drift and diffusion coefficients are frozen at $X_{h,k}$, namely 
\begin{equation*}
      a_{h,k} := b_h(X_{h,k}),
      \quad
      \Sigma_{h,k} := \sigma_h(X_{h,k}).
\end{equation*}
Since the reflecting domain is the positive orthant and $\Sigma_{h,k}$ is diagonal, the Skorokhod problem on each time interval
decouples into two one-dimensional reflection problems on $[0,\infty)$. More precisely, for each component $i=1,2$, define the frozen unreflected driving path by
\begin{equation*}
      Y_i(\tau)
      :=
      X_{h,k}^{i}
      +
      a_{h,k}^{i}(\tau-t_k)
      +
      \Sigma_{h,k}^{ii}
      \big(
      W_i(\tau)-W_i(t_k)
      \big),
      \quad
      \tau\in[t_k,t_{k+1}].
\end{equation*}
The one-dimensional Skorokhod correction over $[t_k,t_{k+1}]$ is then
given by
%\begin{equation*}
%      \Delta K_{h,k}^{i}
%      :=
%      \left(
%      -
%      \inf_{\tau\in[t_k,t_{k+1}]}
%      Y_i(\tau)
%      \right)^{+};
%\end{equation*}
\begin{equation*}
      \Delta K_{h,k}^{i}
      :=
      \max\left\{-\inf_{\tau \in [t_k,t_{k+1}]}Y_i(\tau), 0 \right\},
      \quad i = 1,2.
\end{equation*}
see, e.g., \cite{pilipenko2014introduction}. Thus the endpoint update reads
\begin{equation*}
      X_{h,k+1}^{i}
      = 
      Y_i(t_{k+1}) + \Delta K_{h,k}^{i},
      \quad
      K_{h,k+1}^{i}
      =
      K_{h,k}^{i} + \Delta K_{h,k}^{i},
      \quad i = 1, 2.
\end{equation*}
This componentwise implementation preserves the constraint $X_{h,k} \in \overline{D}$ at all grid points. 
% In the numerical computation, the running infimum above is evaluated on the finest temporal grid used to generate the reference Brownian paths. All approximations with different stepsizes are driven by the same Brownian paths, which gives a consistent Monte Carlo estimate of the strong errors.
Figure~\ref{fig:sample-path} displays one representative sample path of the coupled tamed Euler--Peano approximation $X_h(t)$ and the corresponding numerical boundary regulator $K_h(t)$ with $h = 2^{-12}$. The two components of $X_h(t)$ remain nonnegative throughout the simulation, confirming the constraint-preserving property of the method. Moreover, the components of $K_h(t)$ are nondecreasing and increase only when the corresponding component of $X_h(t)$ approaches the boundary, illustrating the action of the reflection term.

\begin{figure}[!htbp]
\begin{center}
      \subfigure[sample paths of $X_{h}(t) = (X_{h}^{1}(t), X_{h}^{2}(t))$]
      {\includegraphics[width=0.45\textwidth]{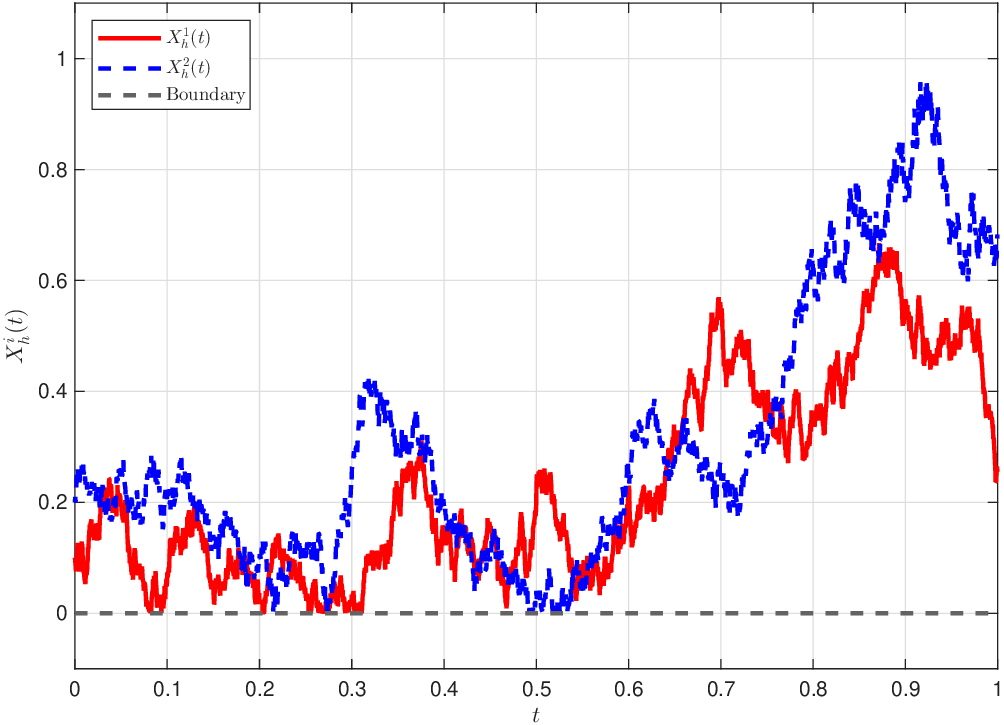}}
      \qquad
      \subfigure[sample paths of $K_{h}(t) = (K_{h}^{1}(t), K_{h}^{2}(t))$]
      {\includegraphics[width=0.45\textwidth]{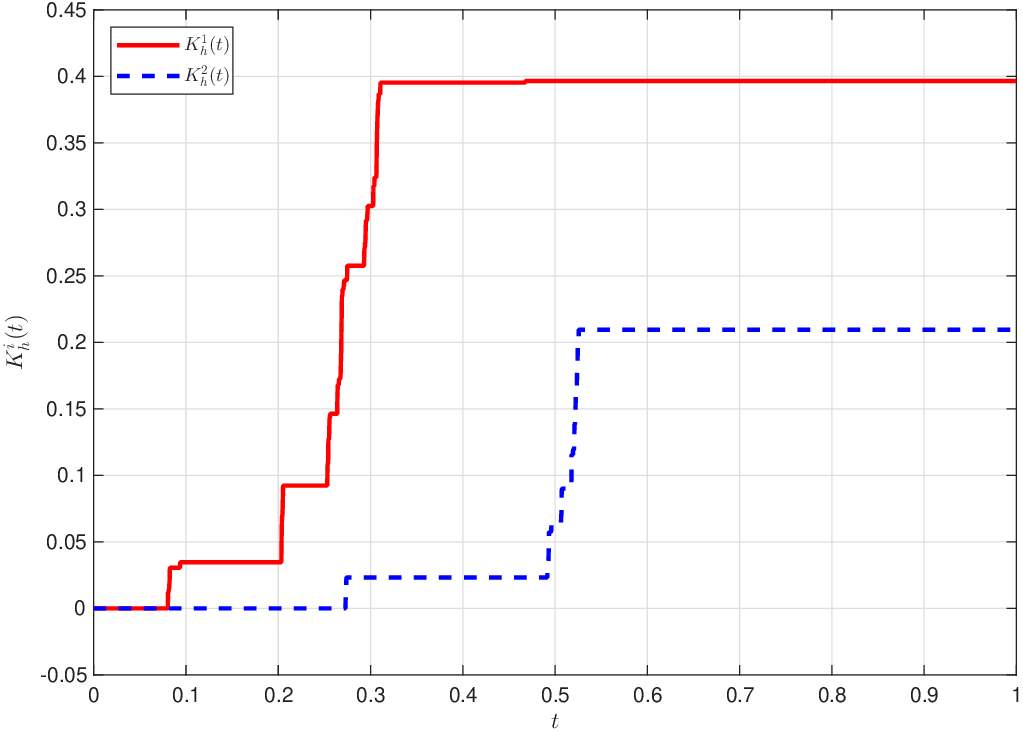}}
      \caption{Sample paths of $X_h(t)$ and $K_h(t)$ for \eqref{eq:numerical-rsde}}
      \label{fig:sample-path}
\end{center}
\end{figure}

To illustrate the strong convergence behaviour of the coupled tamed Euler--Peano method for \eqref{eq:numerical-rsde}, all expectation-type quantities are approximated by the Monte Carlo method with $M = 2000$ independent Brownian sample paths. Since the exact solution is unavailable, we use the reference approximation $X_{h_{\mathrm{ref}}}^{(m)}, K_{h_{\mathrm{ref}}}^{(m)}$ with a finer stepsize $h_{\mathrm{ref}}$ along the $m$-th Brownian sample path. The other numerical approximations $X_{h_{\ell}}^{(m)}, K_{h_{\ell}}^{(m)}$ are calculated by the coupled tamed Euler--Peano method applied to \eqref{eq:numerical-rsde} with six different stepsizes $h_{\ell} = 2^{-\ell},\ell=7,8,\cdots,12$. Then the strong error of the state process and the boundary regulator are measured by
\begin{equation*}
      \mathcal{E}_{X}(h_{\ell})
      :=
      \left(\frac{1}{M}\sum_{m=1}^{M}
      \max_{0\leq k\leq N_{\ell}} \left|X_{h_{\ell}}^{(m)}(t_{k})
      - X_{h_{\mathrm{ref}}}^{(m)}(t_{k})\right|^{2}\right)^{1/2},
\end{equation*}
and
\begin{equation*}
      \mathcal{E}_{K}(h_{\ell})
      :=
      \left(\frac{1}{M}\sum_{m=1}^{M}\max_{0\leq k\leq N_{\ell}}
      \left|K_{h_{\ell}}^{(m)}(t_{k})
      - K_{h_{\mathrm{ref}}}^{(m)}(t_{k})\right|^{2}\right)^{1/2}, 
\end{equation*}
respectively. Figure~\ref{fig:strong-convergence} presents the log--log error plots for the state process and the boundary regulator of the coupled tamed Euler--Peano method applied to  \eqref{eq:numerical-rsde}. The left panel shows the strong error $\mathcal{E}_{X}(h)$, while the right panel shows the strong error $\mathcal{E}_{K}(h)$. In both panels, the error curves decrease as the stepsize $h$ becomes smaller and are close to the reference line of order $1/2$. This indicates that both the state process and the boundary regulator achieve the mean-square strong convergence order $1/2$, in agreement with the theoretical results.

\begin{figure}[!htbp]
\begin{center}
      \subfigure[Order of state process]
      {\includegraphics[width=0.45\textwidth]{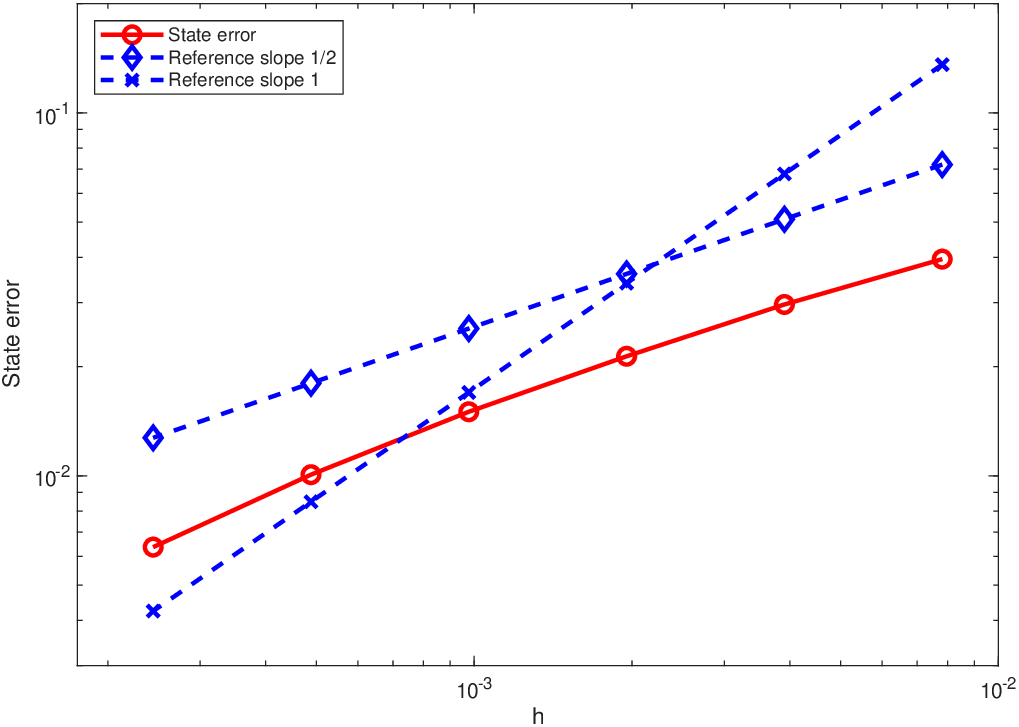}}
      \qquad
      \subfigure[Order of boundary regulator]
      {\includegraphics[width=0.45\textwidth]{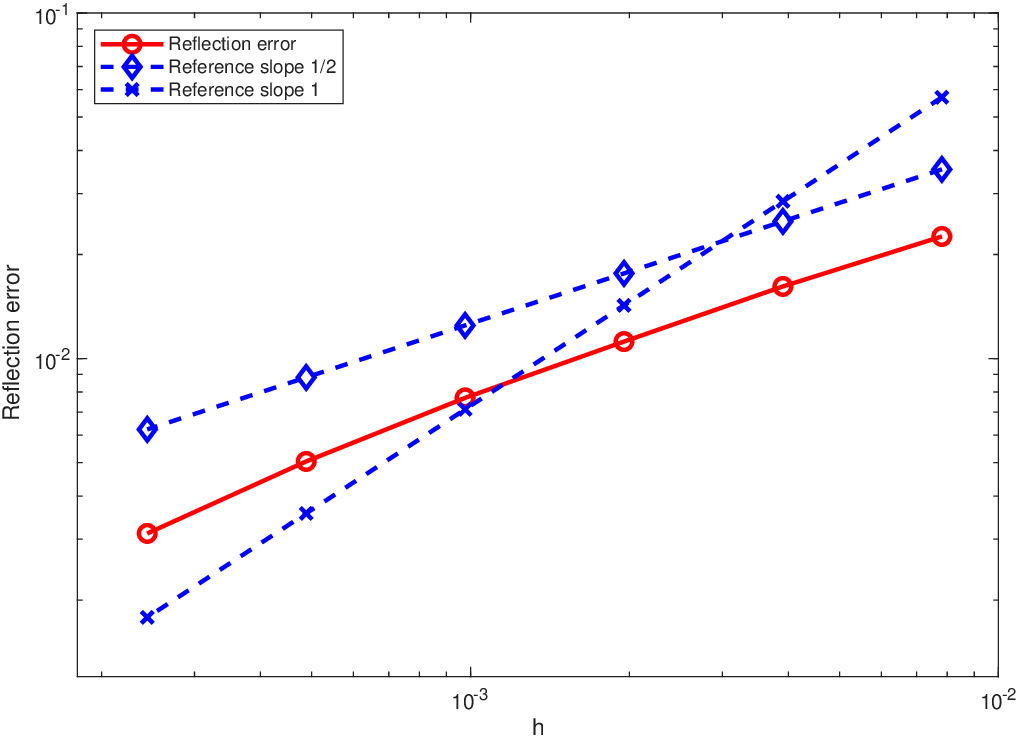}}
      \caption{Strong convergence rates of the coupled tamed Euler--Peano method for  \eqref{eq:numerical-rsde}}
      \label{fig:strong-convergence}
\end{center}
\end{figure}

\section{Conclusion and future work}\label{sec:conclusion}

We proposed a coupled tamed Euler--Peano method for the RSDEs with super-linearly growing drift and diffusion coefficients in possibly unbounded convex domains. Under a coupled monotonicity condition and polynomial local Lipschitz assumptions, we proved the well-posedness of the RSDE, uniform moment estimates for the numerical solution, and strong convergence of order $1/2$ for both the constrained state process and the boundary regulator. Thus, the proposed explicit reflected scheme recovers the standard Euler-type strong order $1/2$ in the reflected super-linear setting while also providing a quantitative order-$1/2$ approximation of the Skorokhod reflection term. The numerical experiments illustrated the constraint-preserving property of the method and supported the theoretical convergence rate.

The present analysis is restricted to normal reflection on convex domains. Several extensions remain open. One natural direction is to investigate whether the coupled taming mechanism can be combined with the geometric conditions used for reflected stochastic differential equations on more general domains, such as the exterior-sphere and non-tangentiality conditions in the sense of
\cite{lions1984stochastic, saisho1987stochastic}. It is also of interest to study oblique reflection, reflected systems driven by jump noise, and higher-order or weak approximation methods. Another challenging problem is to establish quantitative Wong--Zakai
approximation results when the drift and diffusion coefficients are both allowed to grow super-linearly; see, e.g.,
\cite{aida2013wongzakai, aida2014wongzakai}. In that setting, the interaction between the pathwise approximation error, the coupled
dissipativity, and the variation of the boundary regulator requires additional ideas beyond the present Euler--Peano analysis.

%\section{Ackonwlegdes}

%\bibliographystyle{plain}
\bibliographystyle{abbrv}
%\bibliography{CZHBibTeXRSDEs}

\end{document}